\newtheorem{theorem}{Theorem}[section]
\newtheorem{proposition}[theorem]{Proposition}
\newtheorem{lemma}[theorem]{Lemma}
\begin{document}
   \title{Non-colliding Jacobi processes as limits of Markov chains on Gelfand-Tsetlin graph.}
 \date{}
 \author{Vadim Gorin\thanks{e-mail: vadicgor@gmail.com}\\ Moscow State University \\ Independent University of Moscow}
\maketitle

\begin{abstract}
We introduce a stochastic dynamics related to the measures that
arise in harmonic analysis on the infinite--dimensional unitary
group. Our dynamics is obtained as a limit of a sequence of natural
Markov chains on Gelfand-Tsetlin graph.

We compute finite--dimensional distributions of the limit Markov
process, the generator and eigenfunctions of the semigroup related
to this process.

The limit process can be identified with Doob h--transform of a
family of independent diffusions. Space-time correlation functions
of the limit process have a determinantal form.
\end{abstract}

\section*{Introduction}
The present paper originated from harmonic analysis
on the infinite-dimensional unitary group. Decomposition of natural representations
of $U(\infty)$ into irreducible ones leads to a
family $\{P_{z,w}\}$ of probability measures, which depend on the
two parameters $z$ and $w$. These measures live on the
infinite-dimensional domain $\Omega$. The definition of
the measures can be extended to even lager set of values of the
parameters. According to this, we use four subscripts $z$, $w$,
$z'$, $w'$, instead of two.

The measures $P_{z,w}$ and $P_{z,w,z',w'}$ were introduced in the
papers \cite{Olsh} and \cite{BO}. Our goal is to construct and study
stochastic dynamics related to the measures $P_{z,w,z',w'}$.

The structure of measures $P_{z,w,z',w'}$ substantially depends on
whether parameters are integers or not. In the present paper we
consider the former case, the parameters $z$ and $w$ will be
integers. Denote these integers by $p$ and  $q$, respectively. In our
case the support of the measures is a finite-dimensional subset of
$\Omega$ and can be identified with $\mathcal X= [0,1]^{p+q+1}$. The
probability distributions $P_{p,q,z',w'}$ were explicitly computed
in \cite{BO}. They are given by the Jacobi orthogonal polynomial
ensemble.

We construct a family $J_{p,q,z',w'}(t)$ of stationary Markov
processes in $\mathcal X$. Each $P_{p,q,z',w'`}$ serves as an
invariant distribution of the corresponding $J_{p,q,z',w'}(t)$. The
processes $J_{p,q,z',w'}(t)$ are obtained as limits of certain
Markov chains on the Gelfand-Tsetlin graph. These Markov chains
arise in a natural way, due to the approximation of the
infinite-dimensional unitary group by the increasing chain of the
groups $U(N)$. We call these chains ``up--down'' chains. Similar
Markov chains have already appeared earlier, they were studied by
Fulman \cite{Fu1}, \cite{Fu2}, Borodin and Olshanski \cite{BO2} and
Petrov \cite{Pe}

The proof of the convergence of our Markov chains on the Gelfand-Tsetlin
graph is based on the special determinantal form of the transition probabilities of
these Markov chains. We reduce convergence of the transition probabilities to convergence
of certain matrices. Passing to the limit in these matrices is simplified by the fact
that we are able to diagonalize them.

The limit Markov processes $J_{p,q,z',w'}(t)$ turn out to have some
interesting properties. One proves that $J_{p,q,z',w'}(t)$ is a
time-dependent determinantal point process. It means that its dynamical (space-time)
correlation functions have determinantal form and can be expressed
through the minors of certain extended kernel.

We  also explain that $J_{p,q,z',w'}(t)$
can be identified with Doob $h$--transform of $p+q+1$ independent
random motions in $[0,1]$.

We fully describe processes $J_{p,q,z',w'}(t)$, i.e. we compute
their transition probabilities and write down generators and
eigenfunctions of the Markov semigroups corresponding to the
processes.

The study of the harmonic analysis on the infinite-dimensional
unitary group shows numerous connections with infinite symmetric
group $S(\infty)$ (see \cite{KOV} and \cite[Part (m) in Intoduction]{BO} ). The constructions of the present paper are
similar to the constructions of \cite{BO2}, where the dynamics related
to $S(\infty)$ were studied.

It turned out that Markov chains of the present paper have lots of
similarities with the ones considered in \cite{Gor}. We use some
ideas and formulas of \cite{Gor}.

We want to emphasize that while in our case of integral parameters all
processes live in a finite-dimensional space, in the case of
arbitrary parameters the space becomes infinite-dimensional. Thus,
our case can be viewed as a degeneration of the general case. The
problem of constructing dynamics for non-integral parameters remains
open, it seems like one has to use different arguments for that
case.

The paper is organized in the following way. In the first four sections
we do some preparatory work: In Section 1 we
introduce Markov chains related to the
Gelfand-Tsetlin graph. In Section 2 we present the
measures $P_{z,w,z',w'}$ and formulate the problem, solved in the
present paper. In Section 3 we study basic properties of the
Markov chains under consideration. Finally, In Section 4 we express
transition probabilities of the Markov chains in the determinantal
form, which is convenient for limit transitions.

In Section 5 we state and prove the main results of the paper. We prove
the existence of the limit process and compute its one-dimensional
distributions and transition probabilities (see Theorem \ref{theorem_main_result}). Determinantal form
of transition probabilities imply the determinantal property of the limit Markov processes
(see Proposition \ref{proposition_determinantal_kernel}).

In Section 6 we study Markov semigroup related to the constructed process. We compute eigenfunctions
and eigenvalues of the semigroup (see Theorem \ref{Theorem_Semigroup_eigenfunctions}); then we find a simple
expression for the generator of the semigroup (see Theorem \ref{theorem_generator}).

In Section 7 we explain
the connection between our process and Doob $h$--transform.

The author is grateful to G.~Olshanski for suggesting the problem
and numerous fruitful discussions.

The author was partially supported by the Moebius Contest Foundation
for Young Scientists and Leonhard Euler's Fund of Russian
Mathematics Support.

\section{General Markov chains on the Gelfand-Tsetlin graph}

In this section we introduce Markov chains studied in the paper.

\emph{The Gelfand-Tsetlin graph} $\mathbb {GT}$ (also known as the
graph of signatures) is a graded graph, whose vertices are so-called
\emph{signatures}. The $N$--th level of the graph, denoted by
$\mathbb{GT}_N$, consists of $N$--tuples of integers
$\lambda=(\lambda_1\ge\dots\ge\lambda_N)$ which are are called
signatures of the order $N$. We join two signatures $\lambda \in
\mathbb{GT}_N$ and $\mu\in\mathbb{GT}_{N+1}$ by an edge and write
$\lambda\prec\mu$ if
$$\mu_1\ge\lambda_1\ge\mu_2\ge\dots\ge\lambda_N\ge\mu_{N+1}.$$
We agree that $\mathbb{GT}_0$ consists of a single element, the
empty signature $\emptyset$. $\emptyset$ is joined by an edge with
every signature from $\mathbb{GT}_1$.

By a path in the Gelfand-Tsetlin graph we mean a sequence of
vertices
$$\lambda(n) \prec \lambda(n+1)\prec \dots \prec
\lambda(m),\quad \lambda_i\in\mathbb{GT}_i.$$

Denote by $\operatorname{Dim}(\lambda)$ the number of paths going
from $\emptyset$ to $\lambda\in\mathbb{GT}_N$.

For $\mu\in \mathbb{GT}_{N+1}$ and $\lambda\in\mathbb{GT}_{N}$ set
$$
p^\downarrow(\lambda\mid \mu)=\begin{cases}
\operatorname{Dim}(\lambda)/\operatorname{Dim}(\mu),& \text{if }
\lambda \prec \mu \\ 0,& \text{otherwise.}
\end{cases}
$$

It is clear that
$$
 \sum_{\lambda\in\mathbb{GT}_{N}:\lambda\prec\mu}
 p^\downarrow(\lambda\mid \mu)=1.
$$

The numbers $p^\downarrow(\lambda\mid \mu)$ are called
\emph{cotransition probabilities} or \emph{``down'' transition
function}.

A sequence $\{M_N\}_{N=0,1,\dots}$, where  $M_N$ is a probability
measure on $\mathbb{GT}_N$, is called a \emph{coherent system of
distributions} provided that for any $N\ge 0$ the measures $M_{N+1}$
and $M_{N}$ are consistent with the cotransition probabilities from
$\mathbb{GT}_{N+1}$ to $\mathbb{GT}_{N}$, i.e.
$$
 \sum_{\mu\in\mathbb{GT}_{N+1}}
 p^\downarrow(\lambda\mid \mu)M_{N+1}(\mu)=M_N(\lambda) \quad \text{ for any
 }\lambda\in\mathbb{GT}_N.
$$

Define the \emph{support} of a coherent system $M$ as the subset
$$
supp(M)=\{\lambda\in\mathbb{GT}:M(\lambda)\neq 0\} \subset
\mathbb{GT},
$$
where $M(\lambda)$ means the measure $M_N(\lambda)$ of the singleton
$\{\lambda\}\subset \mathbb{GT}_N$.

Given a coherent system of distributions we may speak about
\emph{transition probabilities} or \emph{``up'' transition
functions}. The transition probabilities $p^\uparrow(\mu\mid
\lambda)$ are defined for all $\lambda\in supp(M)$ by
$$
 p^\uparrow(\mu\mid \lambda)=\frac{M_{N+1}(\mu)}{M_N(\lambda)}\cdot p^\downarrow(\lambda\mid \mu),
 \quad
 \lambda\in\mathbb{GT}_N,\mu\in\mathbb{GT}_{N+1},M_{N}(\lambda)\neq
 0.
$$

Any coherent system of distributions $\{M_N\}$ defines a Markov
chain $H(t), t=0,1,\dots$ on the state set $supp(M)$. $H(t)$ takes
values in $L_t=\mathbb{GT}_t \bigcap supp(M)$ and its one-dimensional distributions
are given by $M_t$. The transition probabilities of $H(t)$ are
precisely the numbers $p^\uparrow(\mu\mid \lambda)$:
$$
 {\rm Prob}\{H(t+1)=\mu\mid H(t)=\lambda\}=p^\uparrow(\mu\mid \lambda).
$$

Note that, while the transition probabilities of $H(t)$ depend on
the coherent system, the cotransition probabilities ${\rm
Prob}\{H(t)=\lambda\mid H(t+1)=\mu\}$ are nothing but the quantities
$p^\downarrow(\lambda\mid \mu)$, and they depend solely on the
structure of the Gelfand-Tsetlin graph.

We call $H(t)$ \emph{the up chain}, corresponding to $\{M_N\}$.

We also define a family of stationary Markov chains $T_N(t),\quad
N=0,1,\dots$. The state space of $T_N(t)$ is $L_N=\mathbb{GT}_N
\bigcap supp(M)$. The distribution of $T_N(t)$ is given by $M_N$.
The transition probabilities are given by the composition of ``up''
and ``down'' transition functions, from $L_N$ to $L_{N+1}$ and then
back to $L_N$. Denote by $p^{st}_N(\lambda\mid \lambda')$ the
transition probabilities of $T_N$. We have
$$
 p^{st}_N(\lambda\mid \lambda')=\sum_{\mu\in L_{N+1}} p^\downarrow(\lambda\mid \mu) p^\uparrow(\mu\mid \lambda').
$$

We call $T_N(t)$ \emph{the $N$-th level up-down chain},
corresponding to $\{M_N\}$.

In the present paper we study the behavior of the up-down chains,
corresponding to certain coherent systems, as $N\to\infty$.

{\bf Remarks.} Our definitions makes sense not only for the
Gelfand-Tsetlin graph $GT$. In their papers Kerov and Vershik introduced transition and cotransition probabilities for
arbitrary branching graphs (see e.g. \cite[Section 9]{KOV} and references therein).
Thus, we can define Markov chains similar to
$H(t)$ and $T_N(t)$ for an arbitrary branching graph. The up-down
chains for the Young graph were recently studied by Borodin and
Olshanski in \cite{BO2}. Our work was influenced by that paper.

\section{Measures $P_{z,w,z',w'}$ and construction of the limit
process}
 In the paper \cite{Olsh} a $4$-parameter family of coherent systems
 $M_N^{z,w,z',w'}$ was introduced. These measures appear in a natural way in harmonic
 analysis on the infinite-dimensional unitary group. They are given by
\begin{multline*}
M_N^{z,w,z',w'}(\lambda)=\left( S_N(z,w,z',w') \right) ^{-1}\cdot
\operatorname{Dim}^2(\lambda) \\ \times \prod\limits_{i=1}^N
\frac{1}{\Gamma(z-\lambda_i+i)\Gamma(w+N+1+\lambda_i-i)\Gamma(z'-\lambda_i+i)\Gamma(w'+N+1+\lambda_i-i)}
,
\end{multline*}
where $z,w,z',w'$ are complex numbers, the quadruple $(z,w,z',w')$
belongs to the set of \emph{admissible values} (see \cite[Definition
7.6]{Olsh}) and $S_N(z,w,z',w')$ is a normalization constant
$$
 S_N(z,w,z',w')=\prod\limits_{i=1}^N \frac{\Gamma(z+w+z'+w'+i)}
   {\Gamma(z+w+i)\Gamma(z+w'+i)\Gamma(z'+w+i)\Gamma(z'+w'+i)\Gamma(i)}.
$$

Let us assume that $z=k$ and $w=l$, where $k,l\in\mathbb Z$, $k+l\ge
0$. It was shown in \cite{Olsh} that $(k,l,z',w')$ forms an
admissible quadruple of parameters if $z'$ and $w'$ are real and
$z'-k>-1$, $w'-l>-1$.

Note that for any integer $n$ the shift
$$ k\mapsto k+n,\quad l\mapsto l-n, \quad z'\mapsto z'+n,\quad w'\mapsto w'-n\quad
\lambda_i\mapsto \lambda_i + n$$ leaves the probability
distributions $M_N$ invariant. This means that essentially the
coherent system $\{M_N^{k,l,z',w'}\}$ depends on three, not four
parameters. From now on we assume that $k=p\ge 0$ and $l=0$; $z>
p-1$ and $w> -1$.

It is easily seen from the definition of $M_N^{p,0,z',w'}$ that
$supp (M)$ consists of the signatures $\lambda\in\mathbb{GT}_N$ such
that
$$
p\ge\lambda_1\ge\dots\ge\lambda_N\ge 0.
$$
We always assume below that $\lambda$ satisfies these inequalities.

Set $X_{N,p}=\{0,1,\dots,N+p-1\}$. Observe that $0\le
\lambda_i-i+N\le N+p-1$. Let us associate to $\lambda$ a collection
$X(\lambda)$ of distinct points  in $X_{N,p}$ as follows
$$
 X(\lambda)=X_{N,p}\setminus\{ \lambda_1-1+N,\lambda_2-2+N,\dots,\lambda_N-N+N \}.
$$
It is clear that $|X(\lambda)|=p$ and $X(\lambda)$ defines $\lambda$
uniquely. Let
$$X(\lambda)=\{0\le x_1<x_2<\dots<x_p\le N+p-1\}.$$

{\bf Remark.}
Equivalently $\lambda\mapsto X(\lambda)$ can be described in the following way.
First, we identify $\lambda$ with a Young diagram. Row lengths of this diagram are
$\lambda_i$. Then we consider the transposed diagram and its row lengths (in other words, we
consider heights of the columns of our diagram). We arrive to a collection
of numbers $$0\le\mu_1\le\mu_2\le\dots\le\mu_p\le N.$$
Finally, we set $x_i=\mu_i+i-1$. Equivalence of this definition with the one given above
follows, e.g. from \cite[(1.7)]{Mac}.

We work with $X(\lambda)$ instead of $\lambda$ by the following reasons. First,
$|X(\lambda)|=p$ and, thus, it does not depend on $N$. Consequently, we can identify
$X(\lambda)$ corresponding to different $N$ with elements of one fixed space. Second,
transition to the next level of the Gelfand-Tsetlinn graph is very simple in the $X(\lambda)$
interpretation. We will provide more details later.

Denote by $P_N^{p,z',w'}$ the pushforward of the probability measure
$M_N^{p,0,z',w'}$ under the map $\lambda \mapsto X(\lambda)$. It is
clear that $P_N$ is a probability measure on $(X_{N,p})^p$ and its
support is
$$
 supp(P_N)=\{ (x_1,\dots,x_p)\in (X_{N,p})^p:x_1<x_2<\dots<x_p\}.
$$

For any $N=0,1,2\dots$ we embed the set $(X_{N,p})^p$ into
$\mathcal X=[0,1]^p$ as follows:
$$ \pi_N : (x_1,\dots,x_p)\mapsto
\left(\frac{x_1}{N+p-1},\dots,\frac{x_p}{N+p-1}\right).$$ Denote by
$\tilde P_N^{p,z',w'}$ the pushforward of the measure
$P_N^{p,z',w'}$ under the embedding $\pi_N$.
\begin{proposition}
 As $N\to\infty$ the measures $\tilde P_N^{p,z',w'}$ weakly
 converge to a measure $\mu_{p,z',w'}$.

 The measure $\mu_{p,z',w'}$ is given by its density function $\rho_{p,z',w'}$
 \begin{multline*}
   \rho_{p,z',w'}(x_1,\dots,x_p)\\
     =\begin{cases}
      B_{p,z'w'}\cdot \prod\limits_{i<j} (x_i-x_j)^2\prod\limits_{i=1}^p
      x_i^{w'} (1-x_i)^{z'-p},& x_1<x_2<,\dots,<x_p,\\
      0,& otherwise,
     \end{cases}
 \end{multline*}
 where $B_{p,z'w'}$ is a normalization constant.
\end{proposition}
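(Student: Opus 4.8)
The plan is to work directly with the explicit formula for $M_N^{p,0,z',w'}(\lambda)$, rewrite it in the variables $x_i = \lambda_i - i + N$ (or rather in terms of the complementary set $X(\lambda) = \{x_1 < \dots < x_p\}$), and then perform a termwise asymptotic analysis as $N \to \infty$ after rescaling by $\pi_N$. First I would recall that $\operatorname{Dim}(\lambda)$ is a Vandermonde-type product: for $\lambda \in \mathbb{GT}_N$ one has $\operatorname{Dim}(\lambda) = \prod_{i<j} \frac{(\lambda_i - i) - (\lambda_j - j)}{j - i}$, so $\operatorname{Dim}^2(\lambda)$ contributes a squared Vandermonde in the quantities $\lambda_i - i$. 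Under the change to the complementary coordinates $x_i$, this squared Vandermonde in $\{\lambda_i - i + N\}$ turns into a squared Vandermonde in $\{x_1,\dots,x_p\}$ up to an explicit factor depending only on $N$ and $p$ (this is the standard particle/hole duality for the Vandermonde; one can cite \cite[(1.7)]{Mac} as the Remark already does, or verify it directly). After this step $P_N^{p,z',w'}(x_1,\dots,x_p)$ is, up to a normalization constant absorbing all $N$-dependent-but-$x$-independent factors, equal to $\prod_{i<j}(x_i - x_j)^2$ times a product $\prod_{i=1}^p w_N(x_i)$ of a one-particle weight.

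Next I would identify $w_N(x)$. Collecting the Gamma-function factors from the definition of $M_N$ that survive the passage to the $x_i$-variables, the weight is a ratio of four Gamma functions of the form $\frac{1}{\Gamma(z'-\lambda_i+i)\,\Gamma(w'+N+1+\lambda_i-i)}$ (the factors with integer parameters $p$ and $0$ being, up to sign, responsible for truncating the support to $0 \le \lambda_i \le p$, i.e. $N \le x_i \le N+p-1$ — no, more precisely they reshape into the hole description). Writing $x = \lambda_i - i + N$ and $x' = N+p-1-x$ for the complementary coordinate, $w_N$ becomes essentially $\frac{\Gamma(\text{something} + x)}{\Gamma(\text{something}' + x)} \cdot \frac{\Gamma(\text{something}'' + x')}{\Gamma(\text{something}''' + x')}$ with the constant shifts built from $z', w', N, p$. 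This is precisely the weight of the Hahn orthogonal polynomial ensemble, consistent with the statement in the Introduction that $P_{p,q,z',w'}$ is the Jacobi orthogonal polynomial ensemble.

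Then comes the rescaling and limit. Substituting $x_i = (N+p-1)\,\xi_i$ with $\xi_i \in [0,1]$ and using the classical asymptotics $\frac{\Gamma(a N + c)}{\Gamma(a N + c')} \sim (aN)^{c - c'}$ (uniformly for $\xi$ bounded away from the endpoints, and controlled near them), each Gamma-ratio in $w_N((N+p-1)\xi)$ behaves like $(N\xi)^{w'}$ from the "$x$" pair and $(N(1-\xi))^{z'-p}$ from the "$x'$" pair, up to powers of $N$ absorbed into the normalization. The squared Vandermonde $\prod_{i<j}(x_i-x_j)^2 = (N+p-1)^{p(p-1)}\prod_{i<j}(\xi_i-\xi_j)^2$ likewise contributes only an $N$-power. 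Hence the density of $\tilde P_N^{p,z',w'}$ converges pointwise on the open simplex $\{\xi_1 < \dots < \xi_p\}$ to $\mathrm{const}\cdot\prod_{i<j}(\xi_i-\xi_j)^2\prod_i \xi_i^{w'}(1-\xi_i)^{z'-p}$. To upgrade pointwise convergence of densities to weak convergence of measures, I would invoke Scheffé's lemma: the limit function is integrable on $[0,1]^p$ precisely because $w' > -1$ and $z' - p > -1$ (these are exactly the admissibility conditions $z' - k > -1$, $w' - l > -1$ specialized to $k = p$, $l = 0$), and the normalizing constants converge since $\sum_x w_N(x)\,(\Delta)^2 \to \int \rho$; then pointwise convergence of non-negative densities with convergent integrals forces $L^1$-convergence, hence weak convergence, and simultaneously pins down $B_{p,z',w'}$ as the reciprocal of the limiting integral (a Selberg-type integral, which one need not evaluate).

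The main obstacle is the uniformity of the Gamma-ratio asymptotics near the endpoints $\xi = 0$ and $\xi = 1$: the naive estimate $\Gamma(aN+c)/\Gamma(aN+c') \sim (aN)^{c-c'}$ degrades there, and one must check that the discrete weights $w_N(x)$ do not produce an atom or a non-integrable blow-up at the boundary of the simplex that would survive the limit. This is handled by a more careful application of Stirling's formula keeping track of the full product $\prod_{i=1}^p w_N(x_i)$, or — cleanly — by the Scheffé argument above, which sidesteps any need for uniform control: one only needs pointwise convergence on the open simplex plus convergence of the total masses, and the latter follows from the known fact (cited from \cite{BO}) that $P_N^{p,z',w'}$ is the normalized Hahn ensemble whose partition function has an explicit product form with a tractable $N \to \infty$ asymptotics.
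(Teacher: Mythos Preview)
Your proposal is correct and follows essentially the same approach as the paper. The paper's own proof of this proposition is just a pointer: it cites \cite[Theorem 11.6]{BO} and defers the computation to Proposition~\ref{proposition_density_lim}, where precisely your steps 1--3 are carried out---first Lemma~\ref{lemma_vond_compl} and Lemma~\ref{lemma_dim_formula} implement the particle/hole Vandermonde duality to get $P_N^{p,z',w'}(X)=Z_N\cdot V(X)^2\prod_i w_N(x_i)$ with the explicit Hahn weight $w_N(x)=\frac{\Gamma(z'+N-x)\Gamma(w'+x+1)}{\Gamma(N+p-x)\Gamma(x+1)}$, and then the Gamma-ratio asymptotics $\Gamma(M+a)/\Gamma(M+b)\sim M^{a-b}$ are applied termwise to obtain the limiting density, exactly as you outline.

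The one genuine addition in your write-up is the Scheff\'e argument to upgrade pointwise convergence of densities to weak convergence of measures. The paper's later Proposition~\ref{proposition_density_lim} only establishes pointwise (indeed uniform on compact subsets of the open simplex) convergence of the densities and does not spell out the passage to weak convergence, relying instead on the citation to \cite{BO} for the full statement. Your invocation of Scheff\'e---pointwise convergence of nonnegative densities plus the fact that both the prelimit and limit are probability densities---is the clean way to close that gap, and it works because the admissibility conditions $w'>-1$, $z'-p>-1$ make the limiting density integrable.
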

\begin{proof}
  A slightly different version of this proposition was proved in
  \cite[Theorem 11.6]{BO}. We will also verify this proposition
  by straightforward computations later (see Proposition \ref{proposition_density_lim}).
\end{proof}

The aim of the present paper is to introduce a stationary stochastic
process, which has $\mu^{p,z',w'}$ as an equilibrium measure. We construct this process as a limit of the up-down
Markov chains on the Gelfand-Tsetlin graph.

Denote by $X_{p,z',w'}(t)$ the image of the up Markov process
$H(t)$, corresponding to the coherent system $\{M_N^{p,0,z',w'}\}$,
under the map $\lambda\mapsto X(\lambda)$.

Denote by $U^N_{p,z',w'}(t)$ the image of the up-down Markov process
$T_N(t)$, corresponding to the coherent system
$\{M_N^{p,0,z',w'}\}$, under the map $\lambda\mapsto X(\lambda)$.

Finally, set
$$J^N_{p,z',w'}(t)=\pi_N\left(U^N_{p,z',w'}\left(\left\lfloor t\cdot N^2 \right\rfloor\right)\right)$$

Below we prove the following theorem:
\begin{theorem}
 \label{Theorem_convergence_process}
 There exists a limit stationary Markov process $J_{p,z',w'}(t)$ on $\mathcal X$.
 The finite-dimensional distributions of $J^N_{p,z',w'}(t)$ converge
 as $N\to\infty$ to the corresponding finite-dimensional
 distributions of $J_{p,z',w'}(t)$.
\end{theorem}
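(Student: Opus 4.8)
The plan is to establish convergence of finite-dimensional distributions by combining two ingredients: convergence of the one-dimensional distributions (already granted by the Proposition on $\tilde P_N^{p,z',w'}\to\mu_{p,z',w'}$) and convergence of the transition probabilities of the time-rescaled up-down chains. Since the chains $T_N(t)$ are stationary with marginal $M_N^{p,0,z',w'}$, and the up-down chain has a reversibility property (it is reversible with respect to $M_N$ by the general composition structure of $p^\downarrow$ and $p^\uparrow$), the full finite-dimensional distribution at times $t_1<\dots<t_k$ is a product of the stationary measure at $t_1$ times the one-step transition kernels over the intervals $t_{i+1}-t_i$. Hence it suffices to show that, for each fixed $s>0$ and each pair of points, the $m$-step transition probability of $U^N_{p,z',w'}$ with $m=\lfloor s N^2\rfloor$, transported by $\pi_N$, converges weakly to a limiting Markov kernel $P_s$ on $\mathcal X$, and that the resulting family $\{P_s\}$ satisfies the Chapman-Kolmogorov semigroup property. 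The determinantal expressions for the transition probabilities promised in Section 4 are the tool: I expect $p^{st}_N(\lambda\mid\lambda')$ to be writable as a determinant of an $(p\times p)$ or $(N+p)\times(N+p)$ matrix whose entries are explicit, so that the convergence of the kernel reduces to entrywise convergence of (diagonalizable) matrices, exactly as the introduction advertises.

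Concretely, I would first recall from Section 4 the determinantal formula for $p^{st}_N$, rewrite it in the $X(\lambda)$-coordinates, and substitute the scaling $x_i = \lfloor (N+p-1)\xi_i\rfloor$, time $m=\lfloor sN^2\rfloor$. The matrix entries will be sums (over the intermediate level $L_{N+1}$ or over eigen-indices) of products of Gamma-function ratios raised to powers depending on $m$; the key point is that because the one-step up-down operator is close to the identity (it only moves a configuration locally), its $m$-th power under the $N^2$ time speed-up converges to the heat-type semigroup of a diffusion. I would diagonalize the one-step operator — its eigenvalues are known rational functions of the eigen-index and $N$ (this is where the "eigenfunctions of the semigroup" of Section 6 enter, or at least their finite-$N$ analogues), of the form $1 - c(\text{index})/N^2 + o(1/N^2)$ — so that raising to the power $\lfloor sN^2\rfloor$ produces $e^{-s\,c(\text{index})}$ in the limit. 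Summing the diagonalized expression against the limiting eigenfunctions (suitably normalized Jacobi-type polynomials) yields the limiting kernel $P_s$ as a bilinear series, which one recognizes as the transition density of the Doob $h$-transform diffusion.

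The remaining steps are: (i) check tightness / that the limiting objects $P_s$ are genuine Markov kernels (nonnegativity and total mass $1$), which follows because they are weak limits of probability kernels once one controls the convergence uniformly enough on the compact space $\mathcal X=[0,1]^p$; (ii) verify Chapman-Kolmogorov $P_s P_t = P_{s+t}$, which passes to the limit from the exact semigroup identity for the $U^N$ chains together with the uniform convergence; (iii) verify the limiting initial distribution is $\mu_{p,z',w'}$, granted by the Proposition; (iv) invoke a standard criterion (e.g. Kolmogorov extension plus convergence of all finite-dimensional marginals, or the fact that convergence of finite-dimensional distributions of Markov chains follows from convergence of one-dimensional marginals plus transition kernels) to conclude the existence of the limit Markov process $J_{p,z',w'}(t)$ with the asserted convergence.

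The main obstacle I anticipate is the passage to the limit inside the determinant/series representation of the transition probabilities: one must justify interchanging the $N\to\infty$ limit with the summation over the (growing) index set $L_{N+1}$ or the spectral sum, which requires uniform estimates on the Gamma-ratio matrix entries and on the tails of the eigenfunction expansion. Controlling these tails — showing that high-index eigen-contributions are uniformly negligible after the $\lfloor sN^2\rfloor$-th power is taken (they are suppressed like $e^{-s\,c(\text{index})}$ with $c(\text{index})\to\infty$, but one needs this uniformly in $N$ before the limit) — is the technical heart of the argument, and I would handle it by establishing explicit two-sided bounds on the one-step eigenvalues $1-c_N(\text{index})/N^2$ showing $c_N(\text{index})\ge \text{const}\cdot(\text{index})^2$ uniformly in $N$ for $N$ large. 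Everything else — the algebraic form of the limiting kernel, the semigroup identity, the identification of the initial law — is then bookkeeping with the formulas already set up in Sections 2 and 4.
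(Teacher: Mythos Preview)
Your proposal is correct and follows essentially the same route as the paper: the paper writes the $k$-step transition kernel of $U^N$ in determinantal form with matrix $w_{N,k}(x,x')=\sum_i (c_i^N)^{2k} f_N^i(x)f_N^i(x')$ (the $f_N^i$ being normalized Hahn polynomials), shows $(c_i^N)^{2\lfloor tN^2\rfloor}\to e^{-tK_{p,z',w'}(i)}$ and $f_N^i\to j_{p,z',w'}^i$ termwise, and controls the tail of the spectral sum by combining a uniform eigenvalue bound $(c_i^N)^{2\lfloor tN^2\rfloor-2\lfloor sN^2\rfloor}<e^{-c\,i\ln^2 i}$ with growth estimates $|Q^i|<e^{c'i\ln i}$ on the Hahn polynomials coming from their three-term recurrence. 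The one point you leave implicit is this last ingredient: the eigenvalue decay alone is not enough, since the unnormalized eigenfunctions and inverse norms also grow in $i$, so you will need the polynomial-in-$i$ bounds on the Hahn data to make the tail estimate go through.
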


We will compute the transition probabilities of $J_{p,z',w'}(t)$ and
the generator of $J_{p,z',w'}(t)$.

\section{Properties of Markov chain $X_{p,z',w'}(t)$}
In this section we compute one-dimensional distributions and
transition probabilities of the Markov chain $X_{p,z',w'}(t)$. In what follows
we omit indices and write $X(t)$.

Recall that one-dimensional distributions of $X(N)$
coincide with $P_N^{p,z',w'}$.  $P_N^{p,z',w'}$ is a pushforward of the measure
$M_N^{p,0,z',w'}$ under the map $\lambda\mapsto X(\lambda)$, and
\begin{multline*}
M_N^{z,w,z',w'}(\lambda)=\left( S_N(z,w,z',w') \right) ^{-1}\cdot
\operatorname{Dim}^2(\lambda) \\ \times \prod\limits_{i=1}^N
\frac{1}{\Gamma(z-\lambda_i+i)\Gamma(w+N+1+\lambda_i-i)\Gamma(z'-\lambda_i+i)\Gamma(w'+N+1+\lambda_i-i).}
\end{multline*}
By Weyl's dimension formula (see e.g. \cite{Zh}):
 \begin{multline*} \operatorname{Dim}(\lambda)=
  \prod\limits_{1\le i<j \le
  N}\frac{\lambda_i-\lambda_j+j-i}{j-i}\\=
   \prod\limits_{1\le i<j \le N}\frac{
   (\lambda_i-i+N)-(\lambda_j-j+N)}{j-i},\quad
   \lambda\in\mathbb{GT}_N.
 \end{multline*}

\begin{lemma}
\label{lemma_vond_compl}
 Let $A=\{0,1,\dots,k\}$, $X\subset A$,
$\overline X = A\setminus X$. Denote
$$V(X)=\prod\limits^{x\in X,y\in X}_{x<y} (y-x).$$

Then we have
$$
 V(X)=V(\overline X)\prod\limits_{x\in \overline
 X}\frac{1}{x!(k-x)!}\cdot\prod\limits_{i=1}^k i!
$$
\end{lemma}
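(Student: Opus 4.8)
The plan is to evaluate the full Vandermonde-type product over $A=\{0,1,\dots,k\}$ in two different ways and read off the claimed identity. Write $V(A)=\prod_{0\le x<y\le k}(y-x)$. First I would record two elementary facts. The product $V(A)$ equals $\prod_{i=1}^{k}i!$: grouping the factors of $V(A)$ by the difference $d=y-x$, each $d\in\{1,\dots,k\}$ occurs in exactly $k+1-d$ pairs, so $V(A)=\prod_{d=1}^{k}d^{\,k+1-d}$; the same count applied to $\prod_{i=1}^{k}i!=\prod_{i=1}^{k}\prod_{m=1}^{i}m$ shows that each $m$ carries exponent $k+1-m$, so the two products agree. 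I would also record the one-point version: for every $b\in A$,
$$
 \prod_{a\in A\setminus\{b\}}|a-b|=\Big(\prod_{a=0}^{b-1}(b-a)\Big)\Big(\prod_{a=b+1}^{k}(a-b)\Big)=b!\,(k-b)!,
$$
valid including the endpoints $b=0$ and $b=k$ (with the usual convention that an empty product is $1$).

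Next I would split each unordered pair of distinct elements of $A$ according to whether both lie in $X$, both lie in $\overline X$, or one lies in each, to get
$$
 V(A)=V(X)\,V(\overline X)\prod_{a\in X,\ b\in\overline X}|a-b| ,
$$
where absolute values appear in the mixed product because the two members of such a pair are not ordered a priori. It then remains to express the mixed product $\prod_{a\in X,\,b\in\overline X}|a-b|$ through $V(\overline X)$. For this I would multiply the one-point identity over all $b\in\overline X$ and, on the right-hand side, separate for each fixed $b$ the factors coming from $a\in X$ from those coming from $a\in\overline X\setminus\{b\}$:
$$
 \prod_{b\in\overline X}b!\,(k-b)! \;=\; \prod_{b\in\overline X}\Big(\prod_{a\in X}|a-b|\Big)\Big(\prod_{a\in\overline X\setminus\{b\}}|a-b|\Big) \;=\; \Big(\prod_{a\in X,\,b\in\overline X}|a-b|\Big)\,V(\overline X)^2,
$$
the last equality because $\prod_{b\in\overline X}\prod_{a\in\overline X\setminus\{b\}}|a-b|$ runs over each unordered pair of $\overline X$ exactly twice.

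Finally I would combine the two displays: substituting $\prod_{a\in X,\,b\in\overline X}|a-b|=\dfrac{\prod_{b\in\overline X}b!\,(k-b)!}{V(\overline X)^2}$ into the three-way split and using $V(A)=\prod_{i=1}^{k}i!$ gives
$$
 \prod_{i=1}^{k}i! \;=\; V(X)\cdot\frac{\prod_{b\in\overline X}b!\,(k-b)!}{V(\overline X)},
$$
which rearranges at once to $V(X)=V(\overline X)\prod_{x\in\overline X}\dfrac{1}{x!\,(k-x)!}\cdot\prod_{i=1}^{k}i!$. There is no real obstacle in this argument; the only places that need care are the bookkeeping in the three-way partition of pairs (and the attendant absolute values in the mixed terms) and the degenerate cases $X=\emptyset$ or $\overline X=\emptyset$, where every empty product is to be read as $1$ and the identity reduces to the first elementary fact above.
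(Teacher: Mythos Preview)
Your proof is correct and follows essentially the same approach as the paper: both arguments rest on the three-way split $V(A)=V(X)\,V(\overline X)\cdot(\text{cross terms})$, the one-point identity $\prod_{a\neq b}|a-b|=b!\,(k-b)!$, and the evaluation $V(A)=\prod_{i=1}^k i!$. Your version is simply more explicit and handles signs carefully via absolute values, whereas the paper states the key factorization as ``clear'' and glosses over the sign in $\prod_{i\neq x}(x-i)$.
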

\begin{proof}
It is clear that
$$
 V(X)\prod\limits_{x\in\overline X}\left(\prod\limits_{i\neq x}^{i\in
 A}(x-i)\right)= V(A)\cdot V(\overline X).
$$
To complete the proof we observe that
$$
 V(A)=\prod\limits_{i=1}^k i!
$$
and
$$
 \prod\limits_{i\neq x}^{i\in
 A}(x-i)=
  x!(k-x)!.
$$

\end{proof}

As a corollary of Lemma \ref{lemma_vond_compl} and Weyl's dimension
formula we have

\begin{lemma}
\label{lemma_dim_formula} Let $\lambda\in\mathbb{GT}_N$ and
$X(\lambda)=X=(x_1<x_2<\dots<x_p),)$ then
$$
 \operatorname{Dim}(\lambda)=V(X(\lambda))\cdot
 \prod\limits_{i=1}^p\frac{1}{x_i!(N+p-1-x_i)!}\cdot
 \prod\limits_{i=1}^p(N+i-1)!.
$$
\end{lemma}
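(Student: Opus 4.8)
The plan is to feed the second form of Weyl's dimension formula, namely
$$
 \operatorname{Dim}(\lambda)=\prod_{1\le i<j\le N}\frac{(\lambda_i-i+N)-(\lambda_j-j+N)}{j-i},
$$
through Lemma \ref{lemma_vond_compl} with the choice $A=X_{N,p}=\{0,1,\dots,N+p-1\}$, so $k=N+p-1$. First I would observe that the numerator of Weyl's formula is exactly the Vandermonde $V(\overline{X})$ built on the complementary set $\overline X=A\setminus X(\lambda)=\{\lambda_N-N+N,\lambda_{N-1}-(N-1)+N,\dots,\lambda_1-1+N\}$ (these $N$ numbers are the points removed from $A$ to form $X(\lambda)$). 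The denominator $\prod_{1\le i<j\le N}(j-i)$ is simply $\prod_{i=1}^{N-1} i! = \prod_{i=0}^{N-1} i!$, the superfactorial, which I would record as a separate elementary identity.

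Next I would apply Lemma \ref{lemma_vond_compl}, which gives
$$
 V(\overline X)=V(X)\prod_{x\in X}\bigl(x!\,(N+p-1-x)!\bigr)^{-1}\cdot\prod_{i=1}^{N+p-1} i!,
$$
since here the roles of $X$ and $\overline X$ in the lemma are swapped relative to our notation (the lemma relates $V(X)$ to $V(\overline X)$ via the product over $\overline X$; applying it with the lemma's ``$X$'' equal to our $\overline X$ yields the displayed identity after relabeling). Substituting this into Weyl's formula and writing $X=X(\lambda)=\{x_1<\dots<x_p\}$ produces
$$
 \operatorname{Dim}(\lambda)=V(X(\lambda))\cdot\prod_{i=1}^p\frac{1}{x_i!\,(N+p-1-x_i)!}\cdot\frac{\prod_{i=1}^{N+p-1} i!}{\prod_{i=0}^{N-1} i!}.
$$
It then remains only to simplify the ratio of superfactorials: $\dfrac{\prod_{i=1}^{N+p-1} i!}{\prod_{i=1}^{N-1} i!}=\prod_{i=N}^{N+p-1} i! = \prod_{i=1}^{p}(N+i-1)!$, which matches the claimed formula.

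The main obstacle is bookkeeping rather than mathematics: one must be careful about which of the two complementary sets plays the role of ``$X$'' versus ``$\overline X$'' in Lemma \ref{lemma_vond_compl}, and about the precise index ranges in the superfactorial identities (whether the product over $i$ starts at $0$ or $1$, and whether $k=N+p-1$ or $N+p$). A secondary small point is checking that $\overline X$ as defined above really does consist of the $N$ distinct integers $\lambda_i-i+N$ and that these all lie in $\{0,\dots,N+p-1\}$, which is exactly the inequality $0\le\lambda_i-i+N\le N+p-1$ already noted in Section 2. Once these indexing matters are pinned down the computation is a direct substitution.
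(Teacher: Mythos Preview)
Your proposal is correct and follows exactly the route the paper intends: the paper simply states the lemma ``as a corollary of Lemma~\ref{lemma_vond_compl} and Weyl's dimension formula,'' and your write-up fills in precisely those details (identifying the Weyl numerator with $V(\overline X)$, applying Lemma~\ref{lemma_vond_compl} with the roles of $X$ and $\overline X$ swapped, and cancelling the superfactorials). There is nothing to add beyond the bookkeeping caveats you already flagged.
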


Now denote
\begin{equation}
 \label{weight_definition}
 w_N(x)=\frac{\Gamma(z'+N-x)\Gamma(w'+x+1)}{\Gamma(N+p-x)\Gamma(x+1)}.
\end{equation}

Below we use the Pochhammer symbol
$$
 (a)_k=\frac{\Gamma(a+k)}{\Gamma(a)}=a(a+1)\dots(a+k-1)
$$

\begin{proposition}
\label{proposition_one-time_distrib}
 For $X=(x_1<\dots<x_p)$ we have
$$P_N^{p,z',w'}(X)=Z^{p,z',w'}_N\cdot V^2(X)\prod\limits_{i=1}^p
w_N(x_i),$$ where $Z^{p,z',w'}_N$ is a normalization constant
$$
Z^{p,z',w'}_N=\prod\limits_{i=1}^N \frac
   {(i)_p}{(z'+w'+i)_p} \cdot
   \prod\limits_{i=1}^p \frac{1}{\Gamma(w'+i+1)\Gamma(z'-i+1))}
$$
\end{proposition}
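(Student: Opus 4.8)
The plan is to start from the explicit formula for $M_N^{p,0,z',w'}(\lambda)$ and rewrite everything in terms of $X=X(\lambda)=(x_1<\dots<x_p)$, using Lemma~\ref{lemma_dim_formula} to express $\operatorname{Dim}(\lambda)$ and handling the four Gamma-function products by the same combinatorial bookkeeping. First I would substitute $\operatorname{Dim}^2(\lambda)=V^2(X)\prod_{i=1}^p\frac{1}{(x_i!)^2((N+p-1-x_i)!)^2}\cdot\prod_{i=1}^p((N+i-1)!)^2$ into the definition of $M_N$; the factor $\prod_{i=1}^p((N+i-1)!)^2$ is $X$-independent and will be absorbed into the normalization constant. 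The square of the Vandermonde $V^2(X)$ already matches the desired answer, so the remaining task is to show that the product of the $\Gamma$-factors over $i=1,\dots,N$, divided by $\prod_i (x_i!)^2((N+p-1-x_i)!)^2$, equals $\prod_{i=1}^p w_N(x_i)$ up to an $X$-independent constant.

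The key step is the translation of products indexed by the $N$ rows $i=1,\dots,N$ into products indexed by the $p$ ``holes'' $x_1,\dots,x_p$. Recall $X(\lambda)=X_{N,p}\setminus\{\lambda_i-i+N : i=1,\dots,N\}$, so the multiset $\{\lambda_i-i+N\}$ is exactly the complement $\overline X$ of $X$ in $\{0,1,\dots,N+p-1\}$. Hence for any function $f$,
\[
\prod_{i=1}^N f(\lambda_i-i+N)=\frac{\prod_{y=0}^{N+p-1} f(y)}{\prod_{i=1}^p f(x_i)} .
\]
Applying this with $f(y)=\Gamma(z'+N-y)$ gives $\prod_{i=1}^N\Gamma(z'-\lambda_i+i)=\big(\prod_{y=0}^{N+p-1}\Gamma(z'+N-y)\big)\big/\prod_{i=1}^p\Gamma(z'+N-x_i)$, and the numerator is an $X$-independent constant; similarly for the three other Gamma-products, using $f(y)=\Gamma(w'+y+1)$, $\Gamma(p-y+N)$ (coming from the $w=0$, $z=p$ specialization of $\Gamma(w+N+1+\lambda_i-i)$ and $\Gamma(z-\lambda_i+i)$), and so on, and likewise for the factorials $(x_i!)^2((N+p-1-x_i)!)^2$ in the denominator coming from $\operatorname{Dim}^2$. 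Matching the surviving $x_i$-dependent factors against \eqref{weight_definition} then shows that the total weight attached to each $x_i$ is precisely $w_N(x_i)=\frac{\Gamma(z'+N-x_i)\Gamma(w'+x_i+1)}{\Gamma(N+p-x_i)\Gamma(x_i+1)}$, as claimed.

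It remains to identify the normalization constant $Z_N^{p,z',w'}$. One route is to collect all $X$-independent factors accumulated above: the constants from the four numerator products $\prod_{y=0}^{N+p-1}\Gamma(\cdot)$, the factor $\prod_{i=1}^p((N+i-1)!)^2$ from $\operatorname{Dim}^2$, and the original normalization $S_N(z,w,z',w')^{-1}$ evaluated at $(z,w)=(p,0)$, and simplify the resulting ratio of Gamma functions into the stated product $\prod_{i=1}^N\frac{(i)_p}{(z'+w'+i)_p}\cdot\prod_{i=1}^p\frac{1}{\Gamma(w'+i+1)\Gamma(z'-i+1)}$. A cleaner alternative is to avoid this bookkeeping: once the shape $P_N(X)=\mathrm{const}\cdot V^2(X)\prod_i w_N(x_i)$ is established, $Z_N^{p,z',w'}$ is determined by the normalization $\sum_X V^2(X)\prod_i w_N(x_i)=(Z_N^{p,z',w'})^{-1}$, which is a Jacobi-type Selberg sum that can be evaluated in closed form; one checks it agrees with the stated expression. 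The main obstacle is the normalization constant --- the weight computation is a mechanical complement-counting argument, but pinning down $Z_N^{p,z',w'}$ requires either a careful Gamma-function simplification of $S_N(p,0,z',w')$ together with all the collected constants, or invoking the explicit evaluation of the discrete Jacobi orthogonal polynomial ensemble partition function.
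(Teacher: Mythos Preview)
Your proposal is correct and follows essentially the same route as the paper, which simply records the proof as a ``straightforward computation using Lemmas~\ref{lemma_vond_compl} and~\ref{lemma_dim_formula}''; your complement identity $\prod_{i=1}^N f(\lambda_i-i+N)=\prod_{y=0}^{N+p-1}f(y)\big/\prod_{i=1}^p f(x_i)$ is exactly the mechanism behind those lemmas, and your bookkeeping of the four Gamma-products together with the $\operatorname{Dim}^2$ factor is the intended computation. The only minor remark is that the paper determines $Z_N^{p,z',w'}$ by the first of your two options (collecting the $X$-independent constants and simplifying against $S_N(p,0,z',w')$), rather than by appealing to a Selberg-type evaluation.
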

\begin{proof}
 Straightforward computation using Lemmas \ref{lemma_vond_compl} and \ref{lemma_dim_formula}.
\end{proof}

Now suppose that $\lambda\in\mathbb{GT}_N$,
$\mu\in\mathbb{GT}_{N+1}$, $X=(x_1<\dots<x_p)=X(\lambda)$,
$X'=(x'_1<\dots<x'_p)=X(\mu)$. We write $X\prec X'$, provided that
 $\lambda\prec\mu$. Note that $X\prec X'$ if and only if for every $i$, $x'_i=x_i$
 or $x'_i=x_i+1$.

{\bf Remark.} Consider an arbitrary path
$\tau=(\tau(0),\tau(1),\dots)$ in $\mathbb{GT}$. Let
$X(N)=X(\tau(N))=(x_1^N<\dots<x_p^N)$. For every $1\le i\le p$ and
$N\ge 0$ we put a point on the plane $\mathbb Z^2$ with coordinates
$(N,x_i^N)$ and draw a segment connecting $(N,x_i^N)$ with
$(N+1,x_i^{N+1})$. In this way every path $\tau$ corresponds to a
collection of $p$ non-intersecting paths on the plane $\mathbb Z^2$.
This interpretation shows deep similarities between the objects of
the present paper and the objects of the paper \cite{Gor} where
collections of non-intersecting paths inside a hexagon were studied.

Our next aim is to obtain explicit formulas for the transition
probabilities of the process $X_{p,z',w'}(t)$.
\begin{proposition}
Suppose $X=(x_1<\dots<x_p)\in (X_{N,p})^p$ and
$X'=(x'_1<\dots<x'_p)\in (X_{N+1,p})^p$, then
\begin{multline*}
{\rm Prob}\{X(N+1)=X'\mid X(N)=X\}\\=\begin{cases}
\frac{1}{(z'+w'+N+1)_p}\frac{V(X')}{V(X)}\prod\limits_{i:x'_i=x_i}
(z'+N-x_i)\prod\limits_{i:x'_i=x_i+1}(w'+1+x_i),\quad X\prec X'\\
0,\quad otherwise.
\end{cases}
\end{multline*}
\end{proposition}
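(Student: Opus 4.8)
The plan is to read the answer off from the definition of the ``up'' transition function together with the two explicit formulas already available: the dimension formula of Lemma~\ref{lemma_dim_formula} and the one-time distribution of Proposition~\ref{proposition_one-time_distrib}. Let $\lambda\in\mathbb{GT}_N$ and $\mu\in\mathbb{GT}_{N+1}$ be the signatures with $X(\lambda)=X$, $X(\mu)=X'$; recall that $\lambda\mapsto X(\lambda)$ is a bijection on the relevant supports, so $P_N^{p,z',w'}(X)=M_N^{p,0,z',w'}(\lambda)\neq 0$, and likewise for $N+1$. Since $X\prec X'$ is by definition the same as $\lambda\prec\mu$, and since $p^\downarrow(\lambda\mid\mu)=\operatorname{Dim}(\lambda)/\operatorname{Dim}(\mu)$, the formula $p^\uparrow(\mu\mid\lambda)=\tfrac{M_{N+1}(\mu)}{M_N(\lambda)}p^\downarrow(\lambda\mid\mu)$ gives, for $X\prec X'$,
$$
 {\rm Prob}\{X(N+1)=X'\mid X(N)=X\}=\frac{P_{N+1}^{p,z',w'}(X')}{P_N^{p,z',w'}(X)}\cdot\frac{\operatorname{Dim}(\lambda)}{\operatorname{Dim}(\mu)},
$$
and $0$ when $X\not\prec X'$.

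First I would substitute the three explicit expressions and separate the factors. From Lemma~\ref{lemma_dim_formula},
$$
 \frac{\operatorname{Dim}(\lambda)}{\operatorname{Dim}(\mu)}=\frac{V(X)}{V(X')}\cdot\prod_{i=1}^p\frac{x_i'!\,(N+p-x_i')!}{x_i!\,(N+p-1-x_i)!}\cdot\frac{1}{(N+1)_p},
$$
using $\prod_{i=1}^p (N+i-1)!/(N+i)!=1/(N+1)_p$. From Proposition~\ref{proposition_one-time_distrib},
$$
 \frac{P_{N+1}^{p,z',w'}(X')}{P_N^{p,z',w'}(X)}=\frac{Z_{N+1}^{p,z',w'}}{Z_N^{p,z',w'}}\cdot\frac{V^2(X')}{V^2(X)}\cdot\prod_{i=1}^p\frac{w_{N+1}(x_i')}{w_N(x_i)},
$$
and the ratio of normalization constants telescopes, $Z_{N+1}^{p,z',w'}/Z_N^{p,z',w'}=(N+1)_p/(z'+w'+N+1)_p$. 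The factor $(N+1)_p$ here cancels the $1/(N+1)_p$ above, producing the prefactor $1/(z'+w'+N+1)_p$ and one surviving power $V(X')/V(X)$.

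Next I would collect the remaining $i$-dependent terms. Writing $\Gamma(x+1)=x!$ and $\Gamma(N+p-x)=(N+p-1-x)!$ in \eqref{weight_definition}, the factorials coming from the dimension formula recombine with the weights into
$$
 \frac{w_{N+1}(x_i')}{w_N(x_i)}\cdot\frac{x_i'!\,(N+p-x_i')!}{x_i!\,(N+p-1-x_i)!}=\frac{\Gamma(z'+N+1-x_i')\,\Gamma(w'+x_i'+1)}{\Gamma(z'+N-x_i)\,\Gamma(w'+x_i+1)}.
$$
Since $X\prec X'$ means $x_i'\in\{x_i,x_i+1\}$ for each $i$, this ratio equals $z'+N-x_i$ when $x_i'=x_i$ and $w'+1+x_i$ when $x_i'=x_i+1$; taking the product over $i$ yields exactly the two products in the statement. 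I do not expect a genuine obstacle: the argument is a bookkeeping computation, and the only place requiring care is matching the factorials produced by Lemma~\ref{lemma_dim_formula} against the shifted Gamma-function weights $w_{N+1}$ and $w_N$, and checking that the normalization constants telescope as claimed.
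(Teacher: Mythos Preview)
Your argument is correct and is exactly the approach the paper takes: substitute the expressions from Lemma~\ref{lemma_dim_formula} and Proposition~\ref{proposition_one-time_distrib} into $p^\uparrow(\mu\mid\lambda)=\frac{M_{N+1}(\mu)}{M_N(\lambda)}\cdot\frac{\operatorname{Dim}(\lambda)}{\operatorname{Dim}(\mu)}$ and then simplify using \eqref{weight_definition}. You have simply spelled out the bookkeeping (the telescoping of $Z_{N+1}/Z_N$ and the cancellation of the factorials against the weights) that the paper leaves to the reader.
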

\begin{proof}
 By the definition ${\rm Prob}\{X(N+1)=X'\mid X(N)=X\}$ is transition
 function $p^\uparrow(\mu\mid \lambda)$, where $X'=X(\mu)$ and
 $X=X(\lambda)$. Lemma \ref{lemma_dim_formula} and Proposition~\ref{proposition_one-time_distrib} imply that
 \begin{multline*}
 {\rm Prob}\{X(N+1)=X'\mid X(N)=X\}=p^\uparrow(\mu\mid \lambda)=
 \frac{M_{N+1}(\mu)}{M_N(\lambda)}\cdot\frac{\operatorname{Dim}(\lambda)}{\operatorname{Dim}(\mu)}\\
 =\frac{Z^{p,z',w'}_{N+1}\cdot V^2(X')\prod\limits_{i=1}^p
w_{N+1}(x'_i)}{Z^{p,z',w'}_N\cdot V^2(X)\prod\limits_{i=1}^p
w_N(x_i)} \frac{V(X)\cdot
 \prod\limits_{i=1}^p\frac{1}{x_i!(N+p-1-x_i)!}\cdot
 \prod\limits_{i=1}^p(N+i-1)!}
 {V(X')\cdot
 \prod\limits_{i=1}^p\frac{1}{x'_i!(N+p-x'_i)!}\cdot
 \prod\limits_{i=1}^p(N+i)!}
 \end{multline*}
 Using \eqref{weight_definition} we obtain the desired formula.
\end{proof}

\section{Determinantal form of transition probabilities}
In this section we introduce determinantal formulas for the
transition probabilities of the Markov chains  under
consideration. These formulas are very important for our arguments.

Denote
$$
c_i^N=\sqrt{\left(1-\frac{i}{p+N}\right)\left(1+\frac{i}{w'+z'+N+1}\right)}.
$$
Let $v_N(x,y)$ be $(N+p)\times(N+p-1)$ matrix ($0\le x\le N+p-1$,
$0\le y \le N+p$) given by
$$
v_N(x,y)=\begin{cases}
 \sqrt{\frac{(w'+x+1)(x+1)}{(x+N)(w'+z'+N+1)}},& y=x+1\\
 \sqrt{\frac{(z'+N-x)(p+N-x)}{(p+x)(w'+z'+N+1)}},& y=x\\
 0,& otherwise.
\end{cases}
$$

\begin{proposition}
\begin{multline}
\label{tr_prob_eq} {\rm Prob}\{X(N+1)=X'\mid
X(N)=X\}\\=\frac{\sqrt{P_{N+1}^{p,z',w'}(X')}}{\sqrt{P_{N}^{p,z',w'}(X)}}\cdot
\det[v_N(x_i,x'_j)]_{i,j=1,\dots,p}\cdot\frac{1}{\prod_{i=0}^{p-1}c_i^N},
\end{multline}
where $[v_N(x_i,x'_j)]_{i,j=1,\dots,p}$ is a submatrix of the matrix
$v_N(x,y)$.
\end{proposition}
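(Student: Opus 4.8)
The plan is to evaluate the right-hand side of \eqref{tr_prob_eq} in closed form and to check that it reproduces the explicit product formula for $\mathrm{Prob}\{X(N+1)=X'\mid X(N)=X\}$ established in the preceding Proposition. The first and only conceptual step is to notice that the matrix $v_N(x,y)$ is \emph{bidiagonal}: its $(x,y)$ entry vanishes unless $y=x$ or $y=x+1$. Hence a permutation $\sigma\in S_p$ can contribute a nonzero term to the Leibniz expansion of $\det[v_N(x_i,x'_j)]_{i,j=1}^{p}$ only if $x'_{\sigma(i)}-x_i\in\{0,1\}$ for every $i$. I would show that this forces $\sigma=\mathrm{id}$: if $\sigma$ had an inversion $i<j$ with $\sigma(i)>\sigma(j)$, then, all the $x_i$ and all the $x'_j$ being strictly increasing integers, $x_j\ge x_i+1$ and $x'_{\sigma(i)}\ge x'_{\sigma(j)}+1$, so that $x'_{\sigma(i)}\le x_i+1\le x_j\le x'_{\sigma(j)}<x'_{\sigma(i)}$, a contradiction. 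Therefore $\det[v_N(x_i,x'_j)]_{i,j=1}^{p}=\prod_{i=1}^{p}v_N(x_i,x'_i)$, which vanishes unless $x'_i\in\{x_i,x_i+1\}$ for every $i$, i.e.\ unless $X\prec X'$; since the left-hand side of \eqref{tr_prob_eq} also vanishes when $X\not\prec X'$, it remains to treat the case $X\prec X'$. Under the admissibility hypotheses $z'>p-1$ and $w'>-1$ every quantity under a square root is positive, so all the $v_N(x_i,x'_i)$ are positive reals and there are no sign ambiguities.

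Next I would split the prefactor. By Proposition~\ref{proposition_one-time_distrib},
$$\frac{\sqrt{P^{p,z',w'}_{N+1}(X')}}{\sqrt{P^{p,z',w'}_{N}(X)}}=\sqrt{\frac{Z^{p,z',w'}_{N+1}}{Z^{p,z',w'}_{N}}}\;\frac{V(X')}{V(X)}\;\prod_{i=1}^{p}\sqrt{\frac{w_{N+1}(x'_i)}{w_N(x_i)}},$$
and $Z^{p,z',w'}_{N+1}/Z^{p,z',w'}_{N}=(N+1)_p/(z'+w'+N+1)_p$. The factor $V(X')/V(X)$ already matches the corresponding factor in the target identity and cancels, so what is left to verify is that
\begin{multline*}
\sqrt{\frac{(N+1)_p}{(z'+w'+N+1)_p}}\;\frac{1}{\prod_{i=0}^{p-1}c_i^{N}}\;\prod_{i=1}^{p}\sqrt{\frac{w_{N+1}(x'_i)}{w_N(x_i)}}\,v_N(x_i,x'_i)\\=\frac{1}{(z'+w'+N+1)_p}\prod_{i:\,x'_i=x_i}(z'+N-x_i)\prod_{i:\,x'_i=x_i+1}(w'+1+x_i).
\end{multline*}
Here I would match the product over $i$ factor by factor: using \eqref{weight_definition} and $\Gamma(a+1)=a\Gamma(a)$ one gets $w_{N+1}(x)/w_N(x)=(z'+N-x)/(N+p-x)$ and $w_{N+1}(x+1)/w_N(x)=(w'+x+1)/(x+1)$; combining each with the appropriate value of $v_N(x,x)$ or $v_N(x,x+1)$ and simplifying, the $i$-th factor collapses to $(z'+N-x_i)$ (if $x'_i=x_i$), respectively $(w'+1+x_i)$ (if $x'_i=x_i+1$), multiplied by a constant that depends only on $N$ and $p$.

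It then remains to collect all the $N$-dependent constants. Using $\prod_{i=0}^{p-1}(p+N-i)=(N+1)_p$ and $\prod_{i=0}^{p-1}(w'+z'+N+1+i)=(w'+z'+N+1)_p$ one evaluates $\prod_{i=0}^{p-1}c_i^{N}$ in terms of Pochhammer symbols, and I would check that this, together with the factor $\sqrt{(N+1)_p/(z'+w'+N+1)_p}$ and the $p$-th power of the per-factor constant from the previous step, combines to exactly $1/(z'+w'+N+1)_p$. This establishes \eqref{tr_prob_eq}.

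The only genuinely nontrivial ingredient is the determinant collapse in the first paragraph — the observation that, because $v_N$ is bidiagonal and the point configurations are strictly increasing, only the identity permutation contributes to the $p\times p$ determinant, so that it equals the product of the diagonal entries and the determinantal right-hand side is reconciled with the product formula of the preceding Proposition. Everything after that is a bookkeeping computation with Gamma functions and Pochhammer symbols; the one place where care is needed is tracking the various $N$-dependent prefactors so that they cancel down to the single factor $1/(z'+w'+N+1)_p$.
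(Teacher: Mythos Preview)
Your proof is correct and follows essentially the same approach as the paper: both exploit the bidiagonal structure of $v_N$ to reduce the determinant to the product $\prod_i v_N(x_i,x'_i)$, then match this against the explicit transition probability via the same Gamma/Pochhammer bookkeeping. Your permutation argument is a slightly more explicit version of the paper's one-line remark that any submatrix of a two-diagonal matrix with nonzero determinant is block-diagonal with triangular blocks, but the content is identical.
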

\begin{proof}
Observe that $v_N(x,y)$ is a two-diagonal matrix. Any submatrix of a
two-diagonal matrix, which has non-zero determinant, is
block-diagonal, where each block is either upper or lower triangular
matrix. Thus, any non-zero minor is a product of suitable matrix
elements. Consequently, if $ X\prec X'$, then
\begin{multline*}
\frac{\sqrt{P_{N+1}^{p,z',w'}(X')}}{\sqrt{P_{N}^{p,z',w'}(X)}}\cdot
\det[v_N(x_i,x'_j)]_{i,j=1,\dots,p}\cdot\frac{1}{\prod_{i=0}^{p-1}c_i^N}
\\
=\sqrt{\frac{Z^{p,z',w'}_{N+1}}{Z^{p,z',w'}_N}}\cdot\frac{1}{\prod_{i=0}^{p-1}c_i^N}\cdot
\frac{V(X')}{V(X)}\cdot
\prod\limits_{i=1}^p\sqrt{\frac{w_{N+1}(x_i')}{w_N(x_i)}}
\\ \times
\prod\limits_{i:x'_i=x_i}
\sqrt\frac{(z'+N-x_i)(p+N-x_i)}{(p+x_i)(w'+z'+N+1)}\prod\limits_{i:x'_i=x_i+1}\sqrt\frac{(w'+x+1)(x+1)}{(x+N)(w'+z'+N+1)}
\\=
\frac{\Gamma(z'+w'+N+1)}{\Gamma(z'+w'+p+N+1)}\frac{V(X')}{V(X)}\prod\limits_{i:x'_i=x_i}
(z'+N-x_i)\prod\limits_{i:x'_i=x_i+1}(w'+1+x_i)
\\
={\rm Prob}\{X(N+1)=X'\mid X(N)=X\},
\end{multline*}

and both sides of \eqref{tr_prob_eq} are equal to zero if  $X\nprec
X'$.
\end{proof}

Let $Q^k_{w',z'-p,N+p-1}(x)$ be the Hahn polynomial of degree $k$.
These polynomials are orthogonal with respect to the weight $w_N(x)$
(see \cite{KS}).

Denote
$$f^k_N(x)=\frac{Q^k_{w',z'-p,N+p-1}(x)\sqrt{w_N(x)}.}{\sqrt{(Q^k_{w',z'-p,N+p-1},Q^k_{w',z'-p,N+p-1})}},$$
where $(Q^k_{w',z'-p,N+p-1},Q^k_{w',z'-p,N+p-1})$ is the squared
norm of $Q^k_{w',z'-p,N+p-1}$ in $L_2(\{0,1,\dots,N+p-1\},w_N(x))$. We provide the exact value of this norm in Appendix.

Functions $f^k_N,\quad {k=0,1,\dots,N+p-1}$ form an orthonormal
bases in $L_2(\{0,1,\dots ,N+p-1\})$ (this $L_2$ is with respect to
the uniform measure).

{\bf Remark.} Here and below we use definitions of classical
orthogonal polynomials given in \cite{KS}. Thus
$$
Q^k_{x,\alpha,\beta,M}=
\mathstrut_3F_2{{-k,-x,k+\alpha+\beta+1}\choose {-M,\alpha+1}}\bigl(1\bigr).
$$

We also use the book \cite{NSU}, which uses slightly different
definition of these polynomials (two definitions differ by a
factor). All formulas in the present paper are written according to
the definitions of \cite{KS}.

\begin{proposition} We have
\label{Proposition_Hahn_eigen_relation}
$$
 v_N(x,y)=\sum_{i=0}^{N+p-1} c_i^N f^i_N(x)f^i_{N+1}(y).
$$
Equivalently,
$$v_N=(F_N)^T\cdot C_N\cdot (F_{N+1}),$$
where $v_N$ is $(N+p)\times (N+p+1)$ matrix $v_N(x,y)$,  $F_m$ is
$(m+p)\times (m+p)$ matrix $f_m^i(j)_{i,j=0,\dots,m+p-1}$ and $C_N$
is $(N+p)\times (N+p+1)$ matrix given by
$$
(C_N)_{ij}=\begin{cases} c_i^N,& \text{if } i=j,\\
            0,&otherwise.
            \end{cases}
$$
\end{proposition}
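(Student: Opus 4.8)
The plan is to prove the identity $v_N(x,y)=\sum_{i=0}^{N+p-1} c_i^N f^i_N(x)f^i_{N+1}(y)$ by recognizing the right-hand side as a matrix product of known orthogonal systems and identifying $v_N$ as the connection (or intertwining) operator between the Hahn polynomial families attached to consecutive levels $N$ and $N+1$. The key observation is that the functions $f^k_N$ are the orthonormalized Hahn functions on $\{0,\dots,N+p-1\}$, so $F_N$ is an orthogonal matrix, and similarly for $F_{N+1}$. Hence the claimed equality $v_N=(F_N)^T C_N F_{N+1}$ is equivalent, after multiplying on the left by $F_N$ and on the right by $(F_{N+1})^{-1}=(F_{N+1})^T$, to the statement
\begin{equation*}
 F_N\, v_N\, (F_{N+1})^T = C_N,
\end{equation*}
i.e. that in the Hahn bases of the two levels, the two-diagonal matrix $v_N(x,y)$ becomes the diagonal matrix with entries $c_i^N$. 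So the task reduces to: apply the operator with kernel $v_N(x,y)$ to the level-$(N+1)$ Hahn function $f^i_{N+1}$ and check that the result is $c_i^N f^i_N$.

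Concretely, I would compute $\sum_{y} v_N(x,y) f^i_{N+1}(y)$. Because $v_N(x,y)$ is supported on $y\in\{x,x+1\}$, this sum has only two terms, and after substituting the explicit square-root prefactors in the definitions of $v_N$, $w_{N+1}$, $w_N$, it collapses to a linear combination
\begin{equation*}
 A_N(x)\, Q^i_{w',z'-p,N+p}(x) + B_N(x)\, Q^i_{w',z'-p,N+p}(x+1)
\end{equation*}
times $\sqrt{w_N(x)}$ and a norm ratio, where $A_N(x),B_N(x)$ are explicit rational functions of $x$. The heart of the matter is then a \emph{contiguous relation} for Hahn polynomials: a known three-term identity expressing $Q^i_{w',z'-p,N+p}(x)$ and $Q^i_{w',z'-p,N+p}(x+1)$ (the degree-$i$ Hahn polynomial with parameter $M=N+p$) as a multiple of $Q^i_{w',z'-p,N+p-1}(x)$ (same degree, but $M$ decreased by one). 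Such a relation is standard for the ${}_3F_2$ realization of Hahn polynomials and can be derived, for instance, from the forward/backward shift operators in \cite{KS} or by manipulating the hypergeometric series directly; the scalar that appears is exactly what produces the factor $c_i^N$ once the weight ratios and normalization norms (whose exact value is quoted from the Appendix) are taken into account.

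I expect the main obstacle to be locating and correctly normalizing the contiguous relation that lowers the truncation parameter $M\mapsto M-1$ while keeping the degree fixed, since the literature more commonly records relations that change the degree or the parameters $\alpha,\beta$. A robust fallback is to bypass special-function lookups entirely: since $F_N$ and $F_{N+1}$ are orthogonal and $v_N$ is fixed, it suffices to verify that the vectors $(F_N)^T e_i$ and $(F_{N+1})^T e_i$ are right/left singular vectors of $v_N$ with singular value $c_i^N$; equivalently, one checks $v_N v_N^T = (F_N)^T (C_N C_N^T) F_N$, i.e. that $f^i_N$ is an eigenfunction of the tridiagonal operator $v_N v_N^T$ with eigenvalue $(c_i^N)^2$. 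But $v_N v_N^T$ is (a conjugate of) the three-term recurrence/difference operator diagonalized by the Hahn polynomials on level $N$ — this is precisely the Hahn difference equation — so this eigenvalue computation is purely mechanical and self-contained, and matching the eigenvalue $(c_i^N)^2$ with the known Hahn eigenvalue $i(i+z'+w'+1)$ rescaled by the obvious normalization finishes the proof. Either route establishes the proposition; the second is the one I would carry out in detail, since it needs nothing beyond the classical Hahn difference equation and the orthogonality already set up above.
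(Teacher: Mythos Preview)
Your first approach is essentially what the paper does. In the Appendix the paper starts from the sum $\sum_i c_i^N f^i_N(x) f^i_{N+1}(y)$, applies precisely the contiguous relation you anticipate---recorded there (and taken from \cite{Gor}) as
\[
 x\,Q_k(x-1;\alpha,\beta,M-1)+(M-x)\,Q_k(x;\alpha,\beta,M-1)=M\,Q_k(x;\alpha,\beta,M),
\]
to replace $Q^i$ at level $M=N+p$ by $Q^i$ at level $M=N+p-1$, and then invokes the dual orthogonality of Hahn polynomials to collapse the sum to the two-diagonal $v_N(x,y)$. Your reformulation via $F_N\,v_N\,(F_{N+1})^T=C_N$ is the same computation reorganized; note that summing over $x$ rather than $y$ (i.e.\ checking $v_N^T f^i_N=c_i^N f^i_{N+1}$) lines up directly with the displayed relation, whereas the direction you wrote would need its companion identity.

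Your fallback route, however, has a genuine gap. Verifying $v_N v_N^T=(F_N)^T(C_N C_N^T)F_N$ determines only the \emph{left} singular vectors of $v_N$ and the squares $(c_i^N)^2$; it does not identify the right singular vectors as the $f^i_{N+1}$. Many matrices share the same $v_N v_N^T$: any $(F_N)^T C_N U$ with $U$ orthogonal works. So your ``equivalently'' is not an equivalence. To complete the argument you would also have to check $v_N^T v_N=(F_{N+1})^T(C_N^T C_N)F_{N+1}$ (a second Hahn-difference-equation verification, now at level $N+1$) and then resolve the residual sign/pairing ambiguity in the SVD. At that point the fallback is no shorter than the direct contiguous-relation argument, and it still rests on a nontrivial identification of $v_N v_N^T$ with the conjugated Hahn difference operator, which is itself a computation rather than an immediate fact.
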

\begin{proof}
The argument repeats \cite[Lemma 8]{Gor}. More details are given in
Appendix.
\end{proof}

Our next aim is to obtain a determinantal representation for the transition probabilities of the Markov chain
$U^N_{p,z',w'}(t)$.
\begin{lemma} If $X'\prec X$ then
\begin{multline*}
{\rm Prob}\{X(N)=X'\mid
X(N+1)=X\}\\=\frac{\sqrt{P_N^{p,z',w'}(X')}}{\sqrt{P_{N+1}^{p,z',w'}(X)}}\cdot
\det[v_N(x'_i,x_j)]_{i,j=1,\dots,p}\cdot\frac{1}{\prod_{i=0}^{p-1}c_i^N},
\end{multline*}
\end{lemma}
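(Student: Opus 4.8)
The plan is to reduce the statement to the already-established determinantal formula \eqref{tr_prob_eq} for the \emph{up} transition probabilities, using the elementary relation between ``up'' and ``down'' transition functions that comes from coherency of $\{M_N^{p,0,z',w'}\}$.

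First recall that if $X=X(\mu)$ with $\mu\in\mathbb{GT}_{N+1}$ and $X'=X(\lambda)$ with $\lambda\in\mathbb{GT}_N$, then ${\rm Prob}\{X(N)=X'\mid X(N+1)=X\}$ is nothing but the cotransition probability $p^\downarrow(\lambda\mid\mu)$, which is non-zero precisely when $\lambda\prec\mu$, i.e. when $X'\prec X$. By the definition of the up transition function,
\[
p^\uparrow(\mu\mid\lambda)=\frac{M_{N+1}(\mu)}{M_N(\lambda)}\cdot p^\downarrow(\lambda\mid\mu),
\]
and since $M_N(\lambda)=P_N^{p,z',w'}(X')$ and $M_{N+1}(\mu)=P_{N+1}^{p,z',w'}(X)$, this rearranges to
\[
{\rm Prob}\{X(N)=X'\mid X(N+1)=X\}=p^\downarrow(\lambda\mid\mu)=\frac{P_N^{p,z',w'}(X')}{P_{N+1}^{p,z',w'}(X)}\cdot{\rm Prob}\{X(N+1)=X\mid X(N)=X'\}.
\]

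Next I apply the determinantal identity \eqref{tr_prob_eq} with $X'$ (a configuration on level $N$) playing the role of ``$X$'' and $X$ (a configuration on level $N+1$) playing the role of ``$X'$'':
\[
{\rm Prob}\{X(N+1)=X\mid X(N)=X'\}=\frac{\sqrt{P_{N+1}^{p,z',w'}(X)}}{\sqrt{P_N^{p,z',w'}(X')}}\cdot\det[v_N(x'_i,x_j)]_{i,j=1,\dots,p}\cdot\frac{1}{\prod_{i=0}^{p-1}c_i^N}.
\]
Substituting this into the previous display and simplifying the prefactor via $\dfrac{P_N^{p,z',w'}(X')}{P_{N+1}^{p,z',w'}(X)}\cdot\dfrac{\sqrt{P_{N+1}^{p,z',w'}(X)}}{\sqrt{P_N^{p,z',w'}(X')}}=\dfrac{\sqrt{P_N^{p,z',w'}(X')}}{\sqrt{P_{N+1}^{p,z',w'}(X)}}$ yields the claimed formula. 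For the degenerate case $X'\nprec X$ both sides vanish: the left-hand side because $\lambda\not\prec\mu$, and the right-hand side because $\det[v_N(x'_i,x_j)]=0$ by the two-diagonal structure of $v_N$ noted in the proof of \eqref{tr_prob_eq} (a non-zero minor of a two-diagonal matrix forces the pattern $x'_i=x_j$ or $x'_i=x_j+1$ for all $i$).

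There is no genuine obstacle here; the proof is pure bookkeeping. The only points needing a little care are: (i) correctly tracking which of the two arguments of the submatrix of $v_N$ is the row index and which is the column index when the roles of $X$ and $X'$ are interchanged, so that one indeed obtains $\det[v_N(x'_i,x_j)]$ and not its transpose (they agree anyway, since transposition does not change a determinant, but the index ranges must be respected); and (ii) checking that the ranges $0\le x'_i\le N+p-1$ and $0\le x_j\le N+p$ are exactly the admissible ranges of the entries $v_N(x,y)$, so that the submatrix $[v_N(x'_i,x_j)]_{i,j=1,\dots,p}$ is well defined.
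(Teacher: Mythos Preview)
Your proof is correct and follows essentially the same approach as the paper: use Bayes' rule (equivalently, the defining relation $p^\uparrow(\mu\mid\lambda)=\dfrac{M_{N+1}(\mu)}{M_N(\lambda)}\,p^\downarrow(\lambda\mid\mu)$) to express the down transition in terms of the up transition, then insert the determinantal formula \eqref{tr_prob_eq} and simplify the square-root prefactors. The paper's version is just terser and omits the discussion of the case $X'\nprec X$, which is not needed since the lemma is stated only under the hypothesis $X'\prec X$.
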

\begin{proof}
\begin{multline*}
{\rm Prob}\{X(N)=X'\mid  X(N+1)=X\}
\\=
{\rm Prob}\{X(N+1)=X\mid X(N+1)=X'\}\frac{{\rm Prob}\{X(N)=X'\}}{{\rm
Prob}\{X(N+1)=X\}}
\\=
\frac{\sqrt{P_{N+1}^{p,z',w'}(X)}}{\sqrt{P_{N}^{p,z',w'}(X')}}\cdot
\det[v_N(x'_i,x_j)]_{i,j=1,\dots,p}\cdot\frac{1}{\prod_{i=0}^{p-1}c_i^N}\cdot\frac{P_N^{p,z',w'}(X')}{P_{N+1}^{p,z',w'}(X)}
\\=
\frac{\sqrt{P_N^{p,z',w'}(X')}}{\sqrt{P_{N+1}^{p,z',w'}(X)}}\cdot
\det[v_N(x'_i,x_j)]_{i,j=1,\dots,p}\cdot\frac{1}{\prod_{i=0}^{p-1}c_i^N}
\end{multline*}
\end{proof}
\begin{proposition}
\label{prop_one-step_tr_prob_U} Transition probabilities of the
process $U^N_{p,z',w'}(t)$ are given by

\begin{multline*}
 {\rm Prob}\{U^N_{p,z',w'}(t+1)=X'\mid
U^N_{p,z',w'}(t)=X\}\\=
\frac{\sqrt{P_{N}^{p,z',w'}(X')}}{\sqrt{P_{N}^{p,z',w'}(X)}}
\det[u_N(x_i,x'_j)]_{i,j=1,\dots,p}\cdot\frac{1}{\prod_{i=0}^{p-1}(c_i^N)^2},
\end{multline*}
where $[u_N(x_i,x'_j)]_{i,j=1,\dots,p}$ is a submatrix of the $(N+p)\times
(N+p)$ matrix
$$u_N=v_N\cdot (v_N)^T$$
\end{proposition}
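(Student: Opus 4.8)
The plan is to obtain the transition probabilities of $U^N_{p,z',w'}(t)$ by composing the ``up'' step from level $N$ to level $N+1$ with the ``down'' step from level $N+1$ back to level $N$. Recall from the definition of the up-down chain that
\[
 {\rm Prob}\{U^N(t+1)=X'\mid U^N(t)=X\}
 =\sum_{X''} {\rm Prob}\{X(N+1)=X''\mid X(N)=X\}\cdot{\rm Prob}\{X(N)=X'\mid X(N+1)=X''\},
\]
where the sum runs over $X''=(x''_1<\dots<x''_p)\in (X_{N+1,p})^p$. The first factor is given by Proposition \ref{tr_prob_eq} and the second by the preceding Lemma, so each summand equals
\[
 \frac{\sqrt{P_{N+1}^{p,z',w'}(X'')}}{\sqrt{P_{N}^{p,z',w'}(X)}}\det[v_N(x_i,x''_j)]\cdot\frac{1}{\prod_{i=0}^{p-1}c_i^N}
 \cdot\frac{\sqrt{P_N^{p,z',w'}(X')}}{\sqrt{P_{N+1}^{p,z',w'}(X'')}}\det[v_N(x'_i,x''_j)]\cdot\frac{1}{\prod_{i=0}^{p-1}c_i^N}.
\]

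The crucial observation is that the two square roots of $P_{N+1}^{p,z',w'}(X'')$ cancel exactly, leaving a sum that does not involve the intermediate weights at all:
\[
 {\rm Prob}\{U^N(t+1)=X'\mid U^N(t)=X\}
 =\frac{\sqrt{P_N^{p,z',w'}(X')}}{\sqrt{P_N^{p,z',w'}(X)}}\cdot\frac{1}{\prod_{i=0}^{p-1}(c_i^N)^2}\sum_{X''}\det[v_N(x_i,x''_j)]\,\det[v_N(x'_i,x''_j)].
\]
The remaining task is to identify $\sum_{X''}\det[v_N(x_i,x''_j)]_{i,j}\det[v_N(x'_i,x''_j)]_{i,j}$ with $\det[u_N(x_i,x'_j)]_{i,j}$ where $u_N=v_N v_N^T$. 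This is precisely the Cauchy--Binet identity: for the $p\times(N+p+1)$ matrix with rows indexed by $x_1,\dots,x_p$ (a row selection of $v_N$) multiplied by the transpose of the analogous matrix with rows $x'_1,\dots,x'_p$, the $(i,j)$ entry of the product is $\sum_{y}v_N(x_i,y)v_N(x'_j,y)=(v_N v_N^T)(x_i,x'_j)=u_N(x_i,x'_j)$, and Cauchy--Binet expands this $p\times p$ determinant as the sum over all size-$p$ column subsets $X''\subset\{0,\dots,N+p\}$ of the product of the corresponding $p\times p$ minors. One should note that signs work out because in both minors the columns $x''_1<\dots<x''_p$ appear in the same increasing order, so no extra permutation sign is introduced.

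I expect the only real point requiring care to be the bookkeeping of which index set plays the role of ``rows'' and which of ``columns'' in the Cauchy--Binet step, together with checking that the cancellation of $\sqrt{P_{N+1}^{p,z',w'}(X'')}$ is complete (it is, since one factor appears in the numerator of the up-step and the other in the denominator of the down-step, over the \emph{same} point $X''$). Everything else is a direct substitution of the two preceding results. Note that the sum over $X''$ is finite and all matrices are of finite size, so there are no convergence issues; the identity is purely algebraic.
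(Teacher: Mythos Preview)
Your proof is correct and follows essentially the same route as the paper: write the one-step transition as a sum over intermediate configurations $X''$ at level $N+1$, substitute the determinantal formulas for the up and down steps so that the factors $\sqrt{P_{N+1}^{p,z',w'}(X'')}$ cancel, and then apply the Cauchy--Binet identity to collapse the sum of products of $p\times p$ minors into $\det[u_N(x_i,x'_j)]$ with $u_N=v_N v_N^T$. The only difference is expository: the paper names the two rectangular matrices $A$ and $B$ explicitly before invoking Cauchy--Binet, whereas you describe them in words, but the argument is the same.
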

\begin{proof}
By the definition of $U^N_{p,z',w'}$ we have
\begin{multline*}
{\rm Prob}\{U^N_{p,z',w'}(t+1)=X'\mid  U^N_{p,z',w'}(t)=X\}\\
=\sum\limits_Y {\rm Prob}\{X(N)=X'\mid  X(N+1)=Y\}\cdot {\rm
Prob}\{X(N+1)=Y\mid X(N)=X\}.
\end{multline*}
Thus,
\begin{multline*}
 {\rm Prob}\{U^N_{p,z',w'}(t+1)=X'\mid  U^N_{p,z',w'}(t)=X\}\\=
 \frac{\sqrt{P_{N}^{p,z',w'}(X')}}{\sqrt{P_{N}^{p,z',w'}(X)}}\sum\limits_{y_1<y_2<\dots<y_p}
 \frac{\det[v_N(x_i,y_j)]_{i,j=1,\dots,p}\det[v_N(x'_i,y_j)]_{i,j=1,\dots,p}}{\prod_{i=0}^{p-1}(c_i^N)^2}.
\end{multline*}
Let $A$ be $p\times (N+p+1)$ matrix given by
$$
 A_{ij}=v_N(x_i,j-1),\quad i=1,\dots,p,\quad j=1,\dots, N+p+1.
$$
Let $B$ be $(N+p+1)\times p$ matrix given by
$$
 B_{ij}=v_N(i-1,x'_j),\quad i=1,\dots,N+p+1,\quad j=1,\dots, p.
$$

Denote by $A^{j_1j_2\dots j_p}$ the square submatrix of the matrix $A$ consisting of the columns $j_1,j_2,\dots,j_p$;
denote by $B^{j_1j_2\dots j_p}$ the square submatrix of the matrix $B$ consisting of the rows $j_1,j_2,\dots,j_p$.
By Cauchy-Binet identity
$$
 \det(AB)=\sum_{1\le j_1<j_2<\dots<j_p\le N+p+1} \det(A^{j_1j_2\dots j_p}\cdot B^{j_1j_2\dots j_p}).
$$
Observe that $(AB)_{ij}=u_N(x_i,x'_j)$. Consequently,
\begin{multline*}
{\rm Prob}\{U^N_{p,z',w'}(t+1)=X'\mid  U^N_{p,z',w'}(t)=X\}\\=
\frac{\sqrt{P_{N}^{p,z',w'}(X')}}{\sqrt{P_{N}^{p,z',w'}(X)}}
\frac{\det[u_N(x_i,x'_j)]_{i,j=1,\dots,p}}{\prod_{i=0}^{p-1}(c_i^N)^2},
\end{multline*}
\end{proof}
\begin{proposition}
\label{proposition_U_tr_prob_long} Let $k\in\mathbb Z^+$, then
\begin{multline}
\label{tr_prob_U} {\rm Prob}\{U^N_{p,z',w'}(t+k)=X'\mid
U^N_{p,z',w'}(t)=X\}\\=
\frac{\sqrt{P_{N}^{p,z',w'}(X')}}{\sqrt{P_{N}^{p,z',w'}(X)}}
\frac{\det[w_{N,k}(x_i,x'_j)]_{i,j=1,\dots,p}}{\prod_{i=0}^{p-1}(c_i^N)^{2k}},
\end{multline}
where
$$
 w_{N,k}(x,x')=\sum_{i=0}^{N+p-1} (c_i^N)^{2k} f^i_N(x)f^i_{N}(x').
$$
\end{proposition}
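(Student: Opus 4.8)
The plan is to prove \eqref{tr_prob_U} by induction on $k$, using Proposition \ref{prop_one-step_tr_prob_U} as the base case $k=1$ (where, by Proposition \ref{Proposition_Hahn_eigen_relation}, $u_N(x,x') = \sum_i (c_i^N)^2 f_N^i(x) f_N^i(x')$, which is exactly $w_{N,1}$). For the inductive step, I would use the Chapman--Kolmogorov equation: the transition probability over $t+k+1$ steps is the sum over intermediate states $Y=(y_1<\dots<y_p)$ of the $k$-step probability from $X$ to $Y$ times the one-step probability from $Y$ to $X'$. Substituting the inductive hypothesis and Proposition \ref{prop_one-step_tr_prob_U}, the ratios of $\sqrt{P_N}$ telescope: $\sqrt{P_N(Y)}/\sqrt{P_N(X)} \cdot \sqrt{P_N(X')}/\sqrt{P_N(Y)} = \sqrt{P_N(X')}/\sqrt{P_N(X)}$, so the weight factors cancel cleanly and leave only a sum of products of two $p\times p$ determinants over all $Y$.

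The next step is to recognize this sum as a Cauchy--Binet contraction, exactly as in the proof of Proposition \ref{prop_one-step_tr_prob_U}. Writing $A$ for the $p\times(N+p)$ matrix with entries $w_{N,k}(x_i, y)$ (columns indexed by $y\in\{0,\dots,N+p-1\}$) and $B$ for the $(N+p)\times p$ matrix with entries $u_N(y, x'_j)$, Cauchy--Binet gives
$$\sum_{y_1<\dots<y_p} \det[w_{N,k}(x_i,y_j)]\,\det[u_N(y_i,x'_j)] = \det[(AB)_{ij}] = \det\left[\sum_{y=0}^{N+p-1} w_{N,k}(x_i,y)\,u_N(y,x'_j)\right].$$
So it remains only to compute the matrix entry $\sum_y w_{N,k}(x,y) u_N(y,x')$ and check it equals $w_{N,k+1}(x,x')$ up to the correct power of the $c_i^N$. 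Using the spectral expansions $w_{N,k}(x,y)=\sum_i (c_i^N)^{2k} f_N^i(x) f_N^i(y)$ and $u_N(y,x')=\sum_j (c_j^N)^2 f_N^j(y) f_N^j(x')$ together with orthonormality of the $f_N^i$ in $L_2(\{0,\dots,N+p-1\})$ with the uniform measure — i.e. $\sum_y f_N^i(y) f_N^j(y) = \delta_{ij}$ — the cross sum collapses to $\sum_i (c_i^N)^{2k+2} f_N^i(x) f_N^i(x') = w_{N,k+1}(x,x')$. Tracking the normalizing constants, the factor $1/\prod_i (c_i^N)^{2k}$ from the hypothesis and $1/\prod_i (c_i^N)^2$ from the one-step formula multiply to $1/\prod_i (c_i^N)^{2(k+1)}$, completing the induction.

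There is essentially no serious obstacle here; the argument is a routine combination of Chapman--Kolmogorov, Cauchy--Binet, and the orthonormality already established in Proposition \ref{Proposition_Hahn_eigen_relation}. The only point requiring a little care is bookkeeping: making sure the index ranges of the auxiliary matrices in Cauchy--Binet are the full $\{0,1,\dots,N+p-1\}$ (since $w_{N,k}$ and $u_N$ are both $(N+p)\times(N+p)$ matrices, unlike the rectangular $v_N$ in Proposition \ref{prop_one-step_tr_prob_U}), and that the orthonormality relation $\sum_{y} f_N^i(y) f_N^j(y)=\delta_{ij}$ is applied with exactly the right measure. One should also note that the intermediate sum over $Y$ implicitly ranges over the support of $P_N$, but since $w_{N,k}(x_i,y_j)$ vanishes appropriately off-support (or equivalently one may extend the sum harmlessly), the Cauchy--Binet step is valid over all $p$-subsets of $\{0,\dots,N+p-1\}$.
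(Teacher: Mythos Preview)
Your proof is correct and follows essentially the same route as the paper: induction via Chapman--Kolmogorov, Cauchy--Binet to collapse the sum over intermediate configurations, and the orthonormality of the $f_N^i$ to recover the spectral form of $w_{N,k+1}$. The only difference is organizational: the paper first shows by induction that the kernel is a submatrix of the matrix power $(u_N)^k$ and then diagonalizes this power in one stroke via $u_N=(F_N)^T C_N(C_N)^T F_N$, whereas you carry the spectral expansion through the induction step directly; the content is the same. (Your caveat about ``off-support'' $Y$ is unnecessary here, since the support of $P_N^{p,z',w'}$ is exactly the set of all increasing $p$-tuples in $\{0,\dots,N+p-1\}$, so the Chapman--Kolmogorov sum and the Cauchy--Binet sum already range over the same set.)
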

\begin{proof}
We have
\begin{multline*}
{\rm Prob}\{U^N_{p,z',w'}(t+k)=X'\mid  U^N_{p,z',w'}(t)=X\}\\
=\sum\limits_Y {\rm Prob}\{U^N_{p,z',w'}(t+k)=X'\mid
U^N_{p,z',w'}(t+1)=Y\}\\ \cdot
{\rm Prob}\{U^N_{p,z',w'}(t+1)=Y\mid  U^N_{p,z',w'}(t)=X\}\\
=\sum\limits_Y {\rm Prob}\{U^N_{p,z',w'}(t+k-1)=X'\mid
U^N_{p,z',w'}(t)=Y\}\\ \cdot {\rm Prob}\{U^N_{p,z',w'}(t+1)=Y\mid
U^N_{p,z',w'}(t)=X\}.
\end{multline*}
Applying Proposition \ref{prop_one-step_tr_prob_U} and Cauchy-Binet
identity by induction we obtain the formula \eqref{tr_prob_U}, where
$[w_{N,k}(x_i,x'_j)]_{i,j=1,\dots,p}$ is a submatrix of the matrix
$w_{N,k}=\left(u_N\right)^k$. Consequently,
$$
w_{N,k}=(v_N (v_N)^T)^k=( (F_N)^T\cdot C_N\cdot F_{N+1}\cdot (F_{N+1})^T\cdot (C_N)^T\cdot F_{N})^k.
$$
Recall that
$$
 (F_m)_{ij}=f_m^i(j),\quad i,j=0,\dots,m+p-1.
$$
Since functions $f_m^i$ form an orthonormal basis,
$F_m\cdot (F_m)^T=Id$,
where $Id$ means the identity matrix.
Thus,
$$
w_{N,k}=(F_N)^T\cdot (C_N(C_N)^T)^k\cdot F_N.
$$
This equality implies Proposition
\ref{proposition_U_tr_prob_long}.
\end{proof}

\section{Limit process}
Recall that we are interested in studying
the stationary process
$$J_{p,z',w'}(t)=\lim_{N\to\infty}J^N_{p,z',w'}(t),$$
 where
$$J^N_{p,z',w'}(t)=\pi_N\left(U^N_{p,z',w'}\left(\left\lfloor t\cdot N^2 \right\rfloor\right)\right)$$
 and
$$\pi_N:(x_1,x_2,\dots,x_p)\mapsto \left(\frac{x_1}{N+p-1},\frac{x_2}{N+p-1},\dots,\frac{x_p}{N+p-1} \right).$$

Let us introduce some notation. Denote by $w_{p,z',w'}(x)$ the density function of $B$--distribution:
$$
w_{p,z',w'}(x)=x^{w'}(1-x)^{z'-p},\quad 0<x<1.
$$
Let $Jac^k_{z'-p,w'}(x)$ be \emph{the orthogonal Jacobi polynomial}
of the power $k$. These polynomials are defined for $x\in(0,1)$ and
are orthogonal with respect to the weight function $w_{p,z',w'}$.
According to \cite{KS}
$$
 Jac^n_{\alpha,\beta}(x)=\frac{(\alpha+1)_n}{n!} \mathstrut_2F_1{ {-n, n+\alpha+\beta+1}\choose {\alpha+1}}\bigl(1-x\bigr).
$$
Note that
while Jacobi polynomials are often defined for $x\in(-1,1)$, in the
present paper we scale and shift the domain of definition, and our
polynomials live on $(0,1)$.

Denote
$$
 j_{p,z',w'}^k(x)=\frac{Jac^k_{z'-p,w'}(x)\sqrt{w_{p,z',w'}(x)}}{\sqrt{(Jac^k_{z'-p,w'},Jac^k_{z'-p,w'})}},
$$
where $(Jac^k_{z'-p,w'},Jac^k_{z'-p,w'})$ is the squared norm of
$Jac^k_{z'-p,w'}$ in $L_2([0,1],w_{p,z',w'})$. We provide the exact value of this norm in Appendix.

It is clear that functions $j_{p,z',w'}^k,\quad k=0,1,\dots$ form
an orthonormal system in $L_2([0,1])$ with Lebesgue measure.

Finally, set
$$
K_{p,z',w'}(i)=i(i+w'+z'+1-p).
$$

In this section we prove the existence of the limit process $J_{p,z',w'}(t)$ (Theorem \ref{Theorem_convergence_process})
and compute its one-dimensional distributions and transition probabilities. Our main result is the following.

\begin{theorem}
\label{theorem_main_result}
 Finite-dimensional distributions of $J^N_{p,z',w'}(t)$ weakly converge
 as $N\to\infty$ to the corresponding finite-dimensional
 distributions of a limit process $J_{p,z',w'}(t)$.
$J_{p,z',w'}(t)$ is a stationary Markov process. Its initial
distribution is given by the density
$$
\rho_{p,z',w'}(X)= B_{p,z',w'}\prod\limits_{i<j}(x_i-x_j)^2\prod\limits_{i=1}^p
w_{p,z',w'}(x_i).
$$

The transition probabilities are given by the density
\begin{multline*}
{\mathcal P}^t_{p,z',w'}(Y\mid  X)
=\frac{\sqrt{\rho_{p,z',w'}(Y)}}{\sqrt{\rho_{p,z',w'}(X)}} \cdot
e^{tK_{p,z',w'}}\cdot\det[{\mathcal
J}_{p,z',w'}^t(x_i,y_j)]_{i,j=1,2,\dots,p},
\end{multline*}
where
$$
{\mathcal J}_{p,z',w'}^t(x,y)=\sum\limits_{i=0}^\infty
e^{-tK_{p,z',w'}(i)} {j}_{p,z',w'}^i(x) {j}_{p,z',w'}^i(y)
$$
and
$$
  K_{p,z',w'}=\sum_{i=0}^{p-1}K_{p,z',w'}(i).
$$
\end{theorem}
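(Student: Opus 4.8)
Our proof consists in passing to the limit $N\to\infty$ directly in the determinantal formula of Proposition~\ref{proposition_U_tr_prob_long}, taken with the scaling $k=k(N)=\lfloor tN^2\rfloor$, $x_i=\lfloor \xi_i(N+p-1)\rfloor$, $x'_j=\lfloor \eta_j(N+p-1)\rfloor$ for fixed $\xi=(\xi_1<\dots<\xi_p)$ and $\eta=(\eta_1<\dots<\eta_p)$ in the open chamber of $(0,1)^p$. Since each $U^N_{p,z',w'}$ is a \emph{stationary} Markov chain, the finite-dimensional distributions of $J^N_{p,z',w'}$ are completely determined by the one-time law $\tilde P^{p,z',w'}_N$ (which converges weakly to $\mu_{p,z',w'}$, cf. Proposition~\ref{proposition_density_lim}) and by the $m$-step transition probabilities with $m$ of order $N^2$; moreover $\lfloor t_\ell N^2\rfloor-\lfloor t_{\ell-1}N^2\rfloor=(t_\ell-t_{\ell-1})N^2+O(1)$, so it suffices to establish, for a single $t>0$, a local limit theorem for the $k(N)$-step kernel and then to run the routine argument upgrading it to weak convergence of the joint laws.

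Two elementary asymptotics are needed. First, from the definition of $c^N_i$ one computes $(c^N_i)^2=1-K_{p,z',w'}(i)/N^2+O(N^{-3})$, with implied constant polynomial in $i$ on the relevant range; hence $(c^N_i)^{2k(N)}\to e^{-tK_{p,z',w'}(i)}$ for every fixed $i$ and $\prod_{i=0}^{p-1}(c^N_i)^{-2k(N)}\to e^{tK_{p,z',w'}}$. Second, the Hahn polynomials $Q^i_{w',z'-p,N+p-1}$ converge, after the rescaling $x\mapsto x/(N+p-1)$, to the Jacobi polynomials $Jac^i_{z'-p,w'}$; combining this with $w_N(\lfloor \xi(N+p-1)\rfloor)=(N+p-1)^{w'+z'-p}w_{p,z',w'}(\xi)(1+o(1))$ and with the comparison of the discrete squared norm against $(N+p-1)^{w'+z'-p+1}(Jac^i_{z'-p,w'},Jac^i_{z'-p,w'})$ (both norms are evaluated in the Appendix) gives
$$
\sqrt{N+p-1}\;f^i_N\bigl(\lfloor \xi(N+p-1)\rfloor\bigr)\longrightarrow j^i_{p,z',w'}(\xi),
$$
uniformly for $\xi$ in compact subsets of $(0,1)$ and for each fixed $i$.

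The main obstacle is to justify interchanging the sum and the limit in the spectral expansion $w_{N,k(N)}(x,x')=\sum_{i=0}^{N+p-1}(c^N_i)^{2k(N)}f^i_N(x)f^i_N(x')$ provided by Proposition~\ref{Proposition_Hahn_eigen_relation}. For each fixed $i$ the rescaled summands converge by the two asymptotics above; the point requiring care is a bound on $\sqrt{N+p-1}\,|f^i_N(\lfloor \xi(N+p-1)\rfloor)|$ that is polynomial in $i$ and uniform in $N$ and in $\xi$ on a compact set, which follows from standard uniform estimates for Hahn (resp. Jacobi) polynomials in the bulk. For the tail one uses $\log\bigl((c^N_i)^2\bigr)\le -\frac12\left(\frac{i}{p+N}\right)^2+\left(\frac{i}{w'+z'+N+1}-\frac{i}{p+N}\right)\le \frac{Ci-ci^2}{N^2}$ with $c>0$, so that $(c^N_i)^{2k(N)}\le e^{-c'ti^2}$ for $i\ge i_0$ and all large $N$; since $\sum_i e^{-c'ti^2}\cdot(\text{polynomial in }i)<\infty$ for every $t>0$, dominated convergence applies. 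Consequently
$$
(N+p-1)\,w_{N,k(N)}\bigl(\lfloor \xi(N+p-1)\rfloor,\lfloor \eta(N+p-1)\rfloor\bigr)\longrightarrow \mathcal J^t_{p,z',w'}(\xi,\eta),
$$
and the same bound shows that $\mathcal J^t_{p,z',w'}$ is well defined and continuous on compact subsets of $(0,1)^2$.

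It remains to assemble the pieces. Multiplying each entry of the $p\times p$ matrix $[w_{N,k(N)}(x_i,x'_j)]$ by $N+p-1$ multiplies its determinant by $(N+p-1)^p$, so $(N+p-1)^p\det[w_{N,k(N)}(x_i,x'_j)]\to\det[\mathcal J^t_{p,z',w'}(\xi_i,\eta_j)]$; using $(N+p-1)^pP^{p,z',w'}_N(X)\to\rho_{p,z',w'}(\xi)$ (Proposition~\ref{proposition_density_lim}) the prefactor $\sqrt{P^{p,z',w'}_N(X')}/\sqrt{P^{p,z',w'}_N(X)}$ tends to $\sqrt{\rho_{p,z',w'}(\eta)}/\sqrt{\rho_{p,z',w'}(\xi)}$; and $\prod_{i=0}^{p-1}(c^N_i)^{-2k(N)}\to e^{tK_{p,z',w'}}$. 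Feeding these into Proposition~\ref{proposition_U_tr_prob_long} gives
$$
(N+p-1)^p\,{\rm Prob}\{U^N_{p,z',w'}(t+k(N))=X'\mid U^N_{p,z',w'}(t)=X\}\longrightarrow \mathcal P^t_{p,z',w'}(Y\mid X),
$$
which is the desired local limit theorem, $(N+p-1)^{-p}$ being the volume of a lattice cell. Finally, weak convergence of the one-time laws together with this local limit theorem, and the fact that $\rho_{p,z',w'}$ and $\mathcal P^t_{p,z',w'}$ are continuous with only integrable boundary singularities (here $w'>-1$ and $z'-p>-1$ are used), yield weak convergence of all finite-dimensional distributions of $J^N_{p,z',w'}$ to those of a process $J_{p,z',w'}$; passing to the limit in the Chapman--Kolmogorov relations and in the stationarity identity $\sum_X P^{p,z',w'}_N(X)\,p^{st}_N(X'\mid X)=P^{p,z',w'}_N(X')$ shows that the limiting kernels $\mathcal P^t_{p,z',w'}$ form a consistent Markov family for which $\mu_{p,z',w'}$ is invariant. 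They therefore define the stationary Markov process $J_{p,z',w'}(t)$ of the statement, which also establishes Theorem~\ref{Theorem_convergence_process}.
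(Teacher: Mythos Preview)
Your approach is essentially the paper's: both arguments pass to the limit in the determinantal formula of Proposition~\ref{proposition_U_tr_prob_long}, use the Hahn--to--Jacobi convergence for each fixed $i$, and control the tail of the spectral sum by playing a decay estimate on $(c^N_i)^{2k(N)}$ against a growth estimate on the orthonormal functions.

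One point deserves a correction. You assert that $\sqrt{N+p-1}\,|f^i_N(\lfloor\xi(N+p-1)\rfloor)|$ admits a bound that is \emph{polynomial in $i$}, uniform in $N$ and in $\xi$ on compacta, citing ``standard uniform estimates for Hahn polynomials in the bulk.'' Such a bound is not standard (and is delicate to justify) once $i$ is allowed to be comparable to $N$, which is the full range in the sum. The paper does not attempt this; instead it proves, directly from the three-term recurrence, the crude but uniform estimates $|Q^i_{w',z'-p,N+p-1}(x)|\le e^{c'i\ln i}$ and $(N+p-1)^{w'+z'+1-p}/(Q^i,Q^i)\le e^{c''i\ln i}$, and pairs them with the eigenvalue bound $(c^N_i)^{2k(N)}\le e^{-c\,i\ln^2 i}$. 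Your own Gaussian tail estimate $(c^N_i)^{2k(N)}\le e^{-c' t i^2}$ is in fact sharper than the paper's, and is strong enough to dominate even the $e^{Ci\ln i}$ growth; so your dominated-convergence argument survives once you replace ``polynomial in $i$'' by ``at most $e^{Ci\ln i}$'' and justify that via the recurrence as the paper does. With that amendment the proof is correct and coincides with the paper's.
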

{\bf Remark.} Determinantal form of transition probabilities implies
that $J_{p,z',w'}(t)$ is space-time (dynamical) determinantal
process. See Proposition \ref{proposition_determinantal_kernel} for the exact
statement.

Let us begin the proof.

In the first place we want to recompute the density of the
one-dimensional distribution of $J_{p,z',w'}(t)$ (it was originally
computed in \cite{BO}).

\begin{proposition} We have
\label{proposition_density_lim}
\begin{multline*}
{\rm Prob}\{J_{p,z',w'}(t)\in dX\}=\rho_{p,z',w'}(X)dx_1\dots dx_p\\
=B_{p,z',w'}\prod\limits_{i<j}(x_i-x_j)^2\prod\limits_{i=1}^p
x_i^{w'} (1-x_i)^{z'-p} dx_i,
\end{multline*}
where $X=(x_1<x_2<\dots<x_p)$
\end{proposition}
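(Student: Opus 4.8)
The plan is to obtain $\rho_{p,z',w'}$ as the weak limit of the rescaled discrete measures $\tilde P_N^{p,z',w'}$, using the explicit product formula from Proposition~\ref{proposition_one-time_distrib}. Recall that under the map $\lambda\mapsto X(\lambda)$ the measure $M_N^{p,0,z',w'}$ pushes forward to $P_N^{p,z',w'}(X)=Z^{p,z',w'}_N\cdot V^2(X)\prod_{i=1}^p w_N(x_i)$, and the further pushforward under $\pi_N$ places an atom of this mass at the point $(x_1/(N+p-1),\dots,x_p/(N+p-1))\in[0,1]^p$. So I would start from this formula and carry out the $N\to\infty$ asymptotics of each factor along a sequence $x_i = \lfloor t_i(N+p-1)\rfloor$ with $t_1<\dots<t_p$ fixed in $(0,1)$.

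The key steps, in order: (1) For the Vandermonde factor, $V^2(X)=\prod_{i<j}(x_j-x_i)^2 = (N+p-1)^{p(p-1)}\prod_{i<j}(t_j-t_i)^2(1+o(1))$, which supplies the $\prod_{i<j}(x_i-x_j)^2$ in the density. (2) For the weight, $w_N(x)=\dfrac{\Gamma(z'+N-x)\Gamma(w'+x+1)}{\Gamma(N+p-x)\Gamma(x+1)}$; with $x\sim t(N+p-1)$ both $N-x$ and $x$ grow linearly, so I would apply the standard ratio asymptotics $\Gamma(a+s)/\Gamma(b+s)\sim s^{a-b}$ to get $\Gamma(z'+N-x)/\Gamma(N+p-x)\sim (N-x)^{z'-p}$ and $\Gamma(w'+x+1)/\Gamma(x+1)\sim x^{w'}$, hence $w_N(x)\sim (N-x)^{z'-p}x^{w'} \sim (N+p-1)^{z'-p+w'}(1-t)^{z'-p}t^{w'}$. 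This produces $\prod_i t_i^{w'}(1-t_i)^{z'-p}$. (3) Counting powers of $(N+p-1)$: the spacing of the grid contributes $(N+p-1)^{-p}$ to convert the atomic mass to a density (Riemann-sum normalization), the Vandermonde gives $(N+p-1)^{p(p-1)}$, the $p$ weight factors give $(N+p-1)^{p(z'-p+w')}$, and the remaining powers must be absorbed by the normalization constant $Z^{p,z',w'}_N$; one checks the total power of $(N+p-1)$ is consistent so that $Z^{p,z',w'}_N$ times the fixed $N$-powers converges to a finite positive constant $B_{p,z',w'}$. (4) Finally, pass from pointwise convergence of densities to weak convergence of measures, using that the discrete measures are supported on a uniform grid refining $[0,1]^p$ and that the limiting density is continuous and integrable on the simplex $\{t_1<\dots<t_p\}$ (the integrability near the faces $t_i=0,1$ requires $w'>-1$ and $z'-p>-1$, which hold by our standing assumptions $z'>p-1$, $w'>-1$); dominated convergence or a direct Scheffé-type argument then gives the result, and the normalization is forced.

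The main obstacle is the bookkeeping of the powers of $(N+p-1)$ and the precise asymptotics of the normalization constant $Z^{p,z',w'}_N=\prod_{i=1}^N \frac{(i)_p}{(z'+w'+i)_p}\cdot\prod_{i=1}^p\frac{1}{\Gamma(w'+i+1)\Gamma(z'-i+1)}$: one has $\frac{(i)_p}{(z'+w'+i)_p}=\frac{\Gamma(i+p)\Gamma(z'+w'+i)}{\Gamma(i)\Gamma(z'+w'+i+p)}\sim i^{-(z'+w')}$, so $\prod_{i=1}^N$ of this behaves like $N^{-N(z'+w')}$ times Barnes-$G$-type corrections — one must confirm these combine with the $(N+p-1)$-powers from the Vandermonde and weights to leave exactly the finite constant $B_{p,z',w'}$, with no residual growth or decay. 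The rest is routine Gamma-function asymptotics. (As noted in the excerpt, a variant of this statement is already in \cite[Theorem 11.6]{BO}, so an alternative is simply to quote that computation and check the normalization matches.)
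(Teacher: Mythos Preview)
Your approach is essentially identical to the paper's: start from the explicit formula $P_N^{p,z',w'}(X)=Z_N^{p,z',w'}V^2(X)\prod_i w_N(x_i)$ of Proposition~\ref{proposition_one-time_distrib}, substitute $x_i=(N+p-1)t_i$, and apply $\Gamma(M+a)/\Gamma(M+b)\sim M^{a-b}$ to each factor. The paper does exactly this, arriving at
\[
P_N^{p,z',w'}\bigl((N+p-1)X\bigr)\sim \frac{B_{p,z',w'}}{(N+p-1)^p}\prod_{i<j}(x_i-x_j)^2\prod_i x_i^{w'}(1-x_i)^{z'-p},
\]
so that multiplying by $(N+p-1)^p$ (your grid-spacing factor) gives the density.

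However, your step (3) contains a computational slip. The individual term
\[
\frac{(i)_p}{(z'+w'+i)_p}=\prod_{k=0}^{p-1}\frac{i+k}{z'+w'+i+k}
\]
is asymptotic to $1$ as $i\to\infty$, not to $i^{-(z'+w')}$ (both numerator and denominator grow like $i^p$). The product over $i=1,\dots,N$ does not involve Barnes-$G$ behavior at all; it telescopes:
\[
\prod_{i=1}^N\frac{(i)_p}{(z'+w'+i)_p}=\prod_{k=0}^{p-1}\frac{\Gamma(N+k+1)\,\Gamma(z'+w'+k+1)}{\Gamma(k+1)\,\Gamma(z'+w'+N+k+1)}\sim C\cdot N^{-p(z'+w')},
\]
not $N^{-N(z'+w')}$. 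With this correction the power count closes cleanly: the Vandermonde contributes $(N+p-1)^{p(p-1)}$, the $p$ weights contribute $(N+p-1)^{p(z'+w'-p)}$, and $Z_N^{p,z',w'}\sim C\,(N+p-1)^{-p(z'+w')}$, so the net power is $(N+p-1)^{-p}$, exactly cancelled by the Riemann-sum normalization. No residual growth or decay remains, and the argument goes through as you outlined.
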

\begin{proof}

Clearly, one-dimensional distributions of $J^N_{p,z',w'}(t)$ are
$$
{\rm Prob}\{J^N_{p,z',w'}(t)=X\}=P_N^{p,z',w'}( (N+p-1)\cdot X),
$$
where $X=(x_1<x_2<\dots<x_p)$ and $(N+p-1)\cdot X=(
(N+p-1)x_1<(N+p-1)x_2<\cdot<(N+p-1)x_p)\in \{0,1,\dots,N+p-1\}^p$.

If the limit below exist, then the following equality holds
\begin{equation}
\label{Convergence_of_measures} {\rm Prob}\{J_{p,z',w'}(t)\in
dX\}=\lim_{N\to\infty}(N+p-1)^p\cdot {\rm Prob}\{J^N_{p,z',w'}(t)=X\}.
\end{equation}
Applying the formula
$$
 \frac{\Gamma(M+a)}{\Gamma(M+b)}\sim M^{a-b},\quad M\to\infty,
$$
 (here $A\sim B$ means $\lim \frac{A}{B}=1)$
 we get:
\begin{multline*}
{\rm Prob}\{J^N_{p,z',w'}(t)=X\}=Z^{p,z',w'}_N\cdot
\prod\limits_{i<j}((N+p-1)x_i-(N+p-1)x_j)^2 \\ \cdot
\prod\limits_{i=1}^p
\frac{\Gamma(z'+N-(N+p-1)x_i)\Gamma(w'+(N+p-1)x_i+1)}{\Gamma(N+p-(N+p-1)x_i)\Gamma((N+p-1)x_i+1)}\\
\sim Z^{p,z',w'}_N\cdot \prod\limits_{i<j}(x_i-x_j)^2\cdot (N+p-1)^{p(p-1)} \\ \cdot
\prod\limits_{i=1}^p ( (N+p-1)-(N+p-1)x_i )^{z'-p} ( (N+p-1)x_i)^{w'}\\=
Z^{p,z',w'}_N\cdot (N+p-1)^{p(z'+w')-p)}\cdot\prod\limits_{i<j}(x_i-x_j)^2 \cdot \prod\limits_{i=1}^p ( 1-x_i )^{z'-p} x_i^{w'}
\end{multline*}
Substituting $Z^{p,z',w'}_N$ from Proposition
\ref{proposition_one-time_distrib} we obtain
$$
{\rm
Prob}\{J^N_{p,z',w'}(t)=X\}=\frac{B_{p,z',w'}}{(N+p-1)^p}\prod\limits_{i<j}(x_i-x_j)^2
\cdot \prod\limits_{i=1}^p ( 1-x_i )^{z'-p} x_i^{w'}.
$$

\end{proof}
{\bf Remark.} Note that the convergence in
\eqref{Convergence_of_measures} is uniform on any compact set
$$D\subset \{(x_1,x_2,\dots,x_p): 0<x_1\le x_2 \le \dots\le x_p< 1 \}.$$

Next we concentrate on the multidimensional distributions of $J_{p,z',w'}(t)$.

\begin{proposition}
\label{proposition_multy_distributions_convergence} Consider $n$
distinct times
$$
 0\le t_1<t_2<\dots<t_n.
$$
As $N$ tends to infinity, $n$--dimensional distribution of $J^N_{p,z',w'}(t)$ corresponding to $t=t_1,\dots,t_n$ converges.
The limit distribution is given by its density
\begin{multline}
\label{limit_multy_distribution}
\rho(t_1,0< x_1^1<\dots<x_p^1< 1; \dots ; t_n,0< x_1^n<\dots<x_p^n< 1)
\\=
{\sqrt{\rho_{p,z',w'}(x_1^1,\dots,x_p^1)}}
\\ \times
\frac{\det[{\mathcal
J}_{p,z',w'}^{t_2-t_1}(x_i^1,x_j^2)]_{i,j=1,2,\dots,p}}{e^{-(t_2-t_1)K_{p,z',w'}}}
\dots
\frac{\det[{\mathcal
J}_{p,z',w'}^{t_n-t_{n-1}}(x_i^{n-1},x_j^n)]_{i,j=1,2,\dots,p}}{e^{-(t_n-t_{n-1})K_{p,z',w'}}}
\\ \times
{\sqrt{\rho_{p,z',w'}(x_1^n,\dots,x_p^n)}},
\end{multline}

where
${\mathcal J}_{p,z',w'}^t(x,y)$ and $K_{p,z',w'}$ are the same as in Theorem \ref{theorem_main_result}.
\end{proposition}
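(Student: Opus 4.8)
The plan is to write the $n$--dimensional distribution of $U^N_{p,z',w'}$ at the rescaled times explicitly, push it forward by $\pi_N$, and pass to the limit term by term, the key ingredient being the determinantal formula of Proposition~\ref{proposition_U_tr_prob_long}.

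Put $s_m=\lfloor t_mN^2\rfloor$ and $k_m^N=s_{m+1}-s_m$, so that $k_m^N/N^2\to t_{m+1}-t_m$. Since $U^N_{p,z',w'}$ is stationary with one--dimensional law $P_N^{p,z',w'}$, the Markov property together with Proposition~\ref{proposition_U_tr_prob_long} gives its joint law at times $s_1<\dots<s_n$ as
$$P_N^{p,z',w'}(X^1)\prod_{m=1}^{n-1}\frac{\sqrt{P_N^{p,z',w'}(X^{m+1})}}{\sqrt{P_N^{p,z',w'}(X^m)}}\cdot\frac{\det[w_{N,k_m^N}(x_i^m,x_j^{m+1})]_{i,j=1}^p}{\prod_{i=0}^{p-1}(c_i^N)^{2k_m^N}},$$
and the $P_N^{p,z',w'}$--factors telescope, leaving $\sqrt{P_N^{p,z',w'}(X^1)}\cdot\bigl(\prod_{m=1}^{n-1}\det[w_{N,k_m^N}(x_i^m,x_j^{m+1})]_{i,j=1}^p/\prod_{i=0}^{p-1}(c_i^N)^{2k_m^N}\bigr)\cdot\sqrt{P_N^{p,z',w'}(X^n)}$. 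Pushing this forward by $\pi_N$ (applied to each block of variables) and multiplying by the Jacobian $(N+p-1)^{pn}$, I split $(N+p-1)^{pn}=(N+p-1)^{p/2}\cdot(N+p-1)^{p/2}\cdot\prod_{m=1}^{n-1}(N+p-1)^p$, sending one factor $(N+p-1)^{p/2}$ into each of the two $\sqrt{P_N^{p,z',w'}}$ and one factor $(N+p-1)^p$ into each determinant (the latter being the same as multiplying every entry $w_{N,k_m^N}$ by $N+p-1$). Thus the proof reduces to three limits, to be established uniformly on compact subsets of $\{0<x_1<\dots<x_p<1\}$: $(i)$ $\sqrt{(N+p-1)^pP_N^{p,z',w'}((N+p-1)X)}\to\sqrt{\rho_{p,z',w'}(X)}$, which is Proposition~\ref{proposition_density_lim} together with the remark after it; $(ii)$ $\prod_{i=0}^{p-1}(c_i^N)^{2k_m^N}\to e^{-(t_{m+1}-t_m)K_{p,z',w'}}$; and $(iii)$ $(N+p-1)\,w_{N,k_m^N}\bigl((N+p-1)x,(N+p-1)y\bigr)\to {\mathcal J}^{\,t_{m+1}-t_m}_{p,z',w'}(x,y)$.

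The elementary input for $(ii)$ and $(iii)$ is the expansion
$$\log (c_i^N)^2=\log\!\Bigl(1-\tfrac{i}{p+N}\Bigr)+\log\!\Bigl(1+\tfrac{i}{w'+z'+N+1}\Bigr)=-\frac{i(i+w'+z'+1-p)}{N^2}+O\!\Bigl(\frac{i+i^3}{N^3}\Bigr),$$
which yields $(c_i^N)^{2k_m^N}\to e^{-(t_{m+1}-t_m)K_{p,z',w'}(i)}$ for each fixed $i$ (since $K_{p,z',w'}(i)=i(i+w'+z'+1-p)$), and hence $(ii)$ upon multiplying over $i=0,\dots,p-1$. For $(iii)$ I write
$$(N+p-1)\,w_{N,k_m^N}\bigl((N+p-1)x,(N+p-1)y\bigr)=\sum_{i=0}^{N+p-1}(c_i^N)^{2k_m^N}\,g_i^N(x)\,g_i^N(y),\qquad g_i^N(x):=\sqrt{N+p-1}\,f_N^i\bigl((N+p-1)x\bigr),$$
and invoke the classical limit transition from Hahn to Jacobi polynomials (\cite{KS},\cite{NSU}): for each fixed $i$, $g_i^N\to j_{p,z',w'}^i$ uniformly on compact subsets of $(0,1)$. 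Combined with $(c_i^N)^{2k_m^N}\to e^{-(t_{m+1}-t_m)K_{p,z',w'}(i)}$ this gives termwise convergence to $\sum_{i\ge0}e^{-(t_{m+1}-t_m)K_{p,z',w'}(i)}j_{p,z',w'}^i(x)j_{p,z',w'}^i(y)={\mathcal J}^{\,t_{m+1}-t_m}_{p,z',w'}(x,y)$. To exchange limit with summation I bound the tail: the expansion above forces $(c_i^N)^{2k_m^N}\le C\exp(-c(t_{m+1}-t_m)i^2)$ throughout the bulk $i=o(N)$ and super--exponential decay near the upper edge $i\approx N$, uniformly in large $N$, while $|g_i^N|$ admits, on a fixed compact set, a bound polynomial in $i$ and uniform in large $N$ — in the bulk this is the uniform Hahn--to--Jacobi asymptotics, and near the edge the crude bound $|f_N^i|\le1$ (valid because the $f_N^i$ form an orthonormal basis) already suffices given the super--exponential decay there. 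Dominated convergence then gives $(iii)$, uniformly on compact subsets of $(0,1)^2$.

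Multiplying $(i)$, $(ii)$, $(iii)$ together and using continuity of the determinant, the pushed--forward densities $\rho^N$ converge, uniformly on compact subsets of the product of open simplices, to the right--hand side of \eqref{limit_multy_distribution}. Since this limit is a probability density and the state space $[0,1]^p$ is compact, a routine no--escape--of--mass (uniform integrability) argument upgrades pointwise/uniform--on--compacts convergence of densities to weak convergence of the $n$--dimensional distributions. The main obstacle is precisely the interchange of limit and summation in step $(iii)$: one needs the rescaled normalized Hahn functions $g_i^N$ to grow at most polynomially in $i$, uniformly in $N$, so that the Gaussian--type decay of $(c_i^N)^{2k_m^N}$ dominates; this is the analogue of, and can be carried out along the lines of, the estimates in \cite{Gor}.
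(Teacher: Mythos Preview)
Your overall strategy is identical to the paper's: write the $n$--point law via Proposition~\ref{proposition_U_tr_prob_long}, telescope the $\sqrt{P_N}$--factors, split $(N+p-1)^{pn}$ exactly as you do, and reduce to the three limits $(i)$--$(iii)$; the paper also handles $(iii)$ by splitting the sum over $i$ into a finite head and a tail. Your bound $(c_i^N)^{2k_m^N}\le C\exp(-c\,i^2)$, which follows from $\log(1-u)\le -u$ applied to $(c_i^N)^2=1-K_{p,z',w'}(i)/((p+N)(w'+z'+N+1))$, is in fact sharper and cleaner than the paper's Lemma, which only yields $e^{-c\,i\ln^2 i}$ through a case analysis.

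The one place where your argument is weaker than the paper's is the bound on $|g_i^N|$. Claiming a polynomial--in--$i$ bound uniform in $N$ from ``uniform Hahn--to--Jacobi asymptotics'' is not justified: the standard limit transition in \cite{KS}, \cite{NSU} is for \emph{fixed} degree $i$ as $N\to\infty$ and gives no uniform control for $i$ growing with $N$. The paper does not use asymptotics here; instead it bounds the Hahn polynomials directly via their three--term recurrence, obtaining $|Q^i_{w',z'-p,N+p-1}(x)|<e^{c'\,i\ln i}$ uniformly in $N$ and $x$, and separately checks that the rescaled squared norm $(N+p-1)^{w'+z'+1-p}/(Q^i,Q^i)$ is bounded by $e^{c''\,i\ln i}$. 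These sub-exponential growth bounds, combined with either your $e^{-c\,i^2}$ or the paper's $e^{-c\,i\ln^2 i}$ decay, give the required tail estimate. Your crude bound $|f_N^i|\le 1$ from orthonormality is correct but only contributes $|g_i^N|\le\sqrt{N+p-1}$, which alone cannot handle the full range $l<i\le N+p-1$ with a \emph{fixed} $l$; you need a growth--in--$i$ bound for the bulk, and the recurrence is the right tool for that, not asymptotics.
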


\begin{proof}
Recall that $J^N_{p,z',w'}(t)$ is a Markov process.
Proposition \ref{proposition_one-time_distrib} gives its one-dimensional distributions while
Proposition \ref{proposition_U_tr_prob_long} provides transition probabilities of $J^N_{p,z',w'}(t)$.

Consequently, the joint distribution of $J^N_{p,z',w'}(t_1),\dots,J^N_{p,z',w'}(t_n)$ is given by

\begin{multline*}
{\rm Prob}\{J^N_{p,z',w'}(t_1)=X^1, \dots
J^N_{p,z',w'}(t_n)=X^n\}\\=
{\sqrt{P_{N}^{p,z',w'}((N+p-1)X^1)}}
\\ \times  \frac{\det[w_{N,\lfloor
t_2\cdot N^2 \rfloor-\lfloor t_1\cdot N^2 \rfloor
}((N+p-1)x^1_i,(N+p-1)x^2_j)]_{i,j=1,\dots,p}}{\prod_{i=0}^{p-1}(c_i^N)^{2\lfloor
t_2\cdot N^2 \rfloor-2\lfloor t_1\cdot N^2 \rfloor}}
\times \dots\\
\times \frac{\det[w_{N,\lfloor t_n\cdot N^2 \rfloor -\lfloor
t_{n-1}\cdot N^2 \rfloor
}((N+p-1)x^{n-1}_i,(N+p-1)x^n_j)]_{i,j=1,\dots,p}}{\prod_{i=0}^{p-1}(c_i^N)^{2\lfloor
t_{n}\cdot N^2 \rfloor-2\lfloor t_{n-1}\cdot N^2 \rfloor}} \\
\times {\sqrt{P_{N}^{p,z',w'}((N+p-1)X^n)}}
\end{multline*}

Proposition \ref{proposition_density_lim} implies that as
$N\to\infty$,
$${\sqrt{P_{N}^{p,z',w'}((N+p-1)X)\cdot (N+p-1)^p}}\to {\sqrt{\rho_{p,z',w'}(X)}}.$$

Next observe that
\begin{multline*}
\lim_{N\to\infty}(c_i^N)^{2\lfloor t\cdot N^2 \rfloor} =
\lim_{N\to\infty}\left(\left(1-\frac{i}{p+N}\right)\left(1+\frac{i}{w'+z'+N+1}\right)\right)^{\lfloor
t\cdot N^2 \rfloor}
\\=
\lim_{N\to\infty}\left(1+\frac{i(p-w'-z'-1)-i^2}{(p+N)(w'+z'+N+1)}
\right)^{\lfloor t\cdot N^2 \rfloor}=e^{-i(i+w'+z'+1-p)t}
\end{multline*}
It remains to prove that for $t>s$, as $N\to\infty$
$$
(N+p-1)w_{N,\lfloor t\cdot N^2 \rfloor-\lfloor s\cdot N^2 \rfloor} ((N+p-1)x,(N+p-1)y) \to
{\mathcal J}_{p,z',w'}^{t-s}(x,y).
$$
In the following computations we omit parameters and write $Q^i$
instead of $Q^i_{w',z'-p,N+p-1}$. Also we write $\tilde x$ instead
of $(N+p-1)x$ and $\tilde y$ instead of $(N+p-1)y$. We have
\begin{multline*}
w_{N,\lfloor t\cdot N^2 \rfloor-\lfloor s\cdot N^2 \rfloor}(\tilde x,\tilde y)
=\sum_{i=0}^{N+p-1} (c_i^N)^{2\lfloor t\cdot N^2 \rfloor-2\lfloor s\cdot N^2 \rfloor}
{f^i_N(\tilde x)f^i_{N}(\tilde y)} \\
= \sqrt{w_N(\tilde x)w_N(\tilde
y)}  \cdot \sum_{i=0}^{N+p-1} (c_i^N)^{2\lfloor t\cdot N^2
\rfloor-2\lfloor s\cdot N^2 \rfloor}\frac{Q^i(\tilde x)Q^i(\tilde x)}{(Q^i,Q^i)}
\\\sim
\sqrt{w_{p,z',w'}(x)w_{p,z',w'}(y)}(N+p-1)
^{w'+z'-p}  \cdot \sum_{i=0}^{N+p-1} (c_i^N)^{2\lfloor t\cdot N^2
\rfloor-2\lfloor s\cdot N^2 \rfloor}\frac{Q^i(\tilde x)Q^i(\tilde x)}{(Q^i,Q^i)}
\\=
\sqrt{w_{p,z',w'}(x)w_{p,z',w'}(y)}(N+p-1)^{w'+z'-p}(A+B),
\end{multline*}

where
$$
A=\sum_{i=0}^{l} (c_i^N)^{2\lfloor t\cdot N^2
\rfloor-2\lfloor s\cdot N^2 \rfloor}\frac{Q^i(\tilde x)Q^i(\tilde x)}{(Q^i,Q^i)}
$$

and
$$
B=\sum_{i=l+1}^{N+p-1} (c_i^N)^{2\lfloor t\cdot N^2
\rfloor-2\lfloor s\cdot N^2 \rfloor}\frac{Q^i(\tilde x)Q^i(\tilde x)}{(Q^i,Q^i)}.
$$

Now fix large enough $l$ and send $N$ to $\infty$. First, let us
examine $A$. We have (see \cite[Section 2.5]{KS})
$$
Q^i_{w',z'-p,N+p-1}((N+p-1)x)\to
\frac{Jac^i_{w',z'-p}(1-x)}{Jac^i_{w',z'-p}(0)}=\frac{Jac^i_{z'-p,w'}(x)}{Jac^i_{z'-p,w'}(1)},
$$
and the convergence is uniform in $x$ belonging to any compact
subset of $(0,1)$. Here we used the relation
$$
 Jac^i_{\alpha,\beta}(1-x)=(-1)^i Jac^i_{\beta,\alpha}(x).
$$

By straightforward computation one proves that
$$
\lim_{N\to\infty}
\frac{(N+p-1)^{w'+z'+1-p}}{(Q^i_{w',z'-p,N+p-1},Q^i_{w',z'-p,N+p-1})}=\frac{(Jac^i_{z'-p,w'}(1))^2}{(Jac^i_{z'-p,w'}(x),Jac^i_{z'-p,w'}(x))},
$$
in slightly different form this claim was also proved in \cite{NSU}.

Consequently,
\begin{multline*}
 \lim_{N\to\infty}(N+p-1)\cdot\sqrt{w_{p,z',w'}(x)w_{p,z',w'}(y)}(N+p-1)^{w'+z'-p}A\\
 =
 \sum\limits_{i=0}^l
e^{-(t-s)K_{p,z',w'}(i)} {j}_{p,z',w'}^i(x) {j}_{p,z',w'}^i(y)\\
=
 {\mathcal J}_{p,z',w'}^t(x,y)- \sum\limits_{i=l+1}^\infty
e^{-(t-s)K_{p,z',w'}(i)} {j}_{p,z',w'}^i(x) {j}_{p,z',w'}^i(y)
\end{multline*}

Observe that for an arbitrary $\varepsilon>0$ we have
$$ \left|\sum\limits_{i=l+1}^\infty
e^{-(t-s)K_{p,z',w'}(i)} {j}_{p,z',w'}^i(x)
{j}_{p,z',w'}^i(y)\right|<\varepsilon,
$$
if $l$ is large enough. Indeed,
$$e^{-(t-s)K_{p,z',w'}(i)}<e^{-(t-s)i(i-1)}$$
and
$$|{j}_{p,z',w'}^i(x)|<e^{c(x)i},$$
where $c(x)$ is a bounded function for $x$ belonging to any compact
subset of $(0,1)$. This estimate follows, for instance, from the
recurrence relations on Jacobi polynomials.

Thus, to finish the proof we should show that
$$
\limsup_{N\to\infty}\sqrt{w_{p,z',w'}(x)w_{p,z',w'}(y)}(N+p-1)^{w'+z'+1-p}|B|<\varepsilon(l),
$$
and $\varepsilon(l)$ tends to zero when $l$ tends to $\infty$.

We need the following lemma:

\begin{lemma} For large enough $N$ and any $i<N+p$ we have
 $$(c_i^N)^{2\lfloor t\cdot N^2\rfloor-2\lfloor s\cdot N^2 \rfloor}<e^{-c\cdot i  \ln^2(i) },$$
 where constant $c$ does not depend on $N$.
\end{lemma}

\begin{proof}
 $$
  (c_i^N)^2=1-\frac{i(i+w'+z'-p+1)}{(p+N)(w'+z'+N+1)}<1-c_1\frac{i^2}{N^2}.
 $$
 If $i<\frac{N}{\ln(N)}$ and $N$ is large enough, then
 \begin{multline*}
 \left(1-c_1\frac{i^2}{N^2}\right)^{\lfloor t\cdot N^2\rfloor-\lfloor s\cdot N^2 \rfloor}<
 \left(1-c_1\frac{i^2}{N^2}\right)^{ c_2\cdot N^2}=
 \left(\left(1-c_1\frac{i^2}{N^2}\right)^{\frac{N^2}{c_1 i^2}}\right)^{c_1c_2
 i^2}\\<(1-\varepsilon)^{c_1c_2 i^2}<e^{c_3 i \ln^2 i}.
 \end{multline*}

 If $i\ge \frac{N}{\ln(N)}$ then
\begin{multline*}
 \left(1-c_1\frac{i^2}{N^2}\right)^{\lfloor t\cdot N^2\rfloor-\lfloor s\cdot N^2 \rfloor}<
 \left(1-c_1\frac{i^2}{N^2}\right)^{ c_2\cdot N^2} \le
 \left(1-c_1\frac{ (N/\ln(N))^2}{N^2}\right)^{ c_2\cdot N^2}\\=
 \left(\left(1-\frac{c_1}{\ln(N)}\right)^{\frac{\ln(N)}{c_1}}\right)^{c_1c_2\cdot \frac{N^2}{\ln(N)}}
 <
 (1-\varepsilon)^{c_1c_2\cdot\frac{N^2}{\ln(N)}}<(1-\varepsilon)^{c_1c_2
 \frac{i^2}{\ln(i)}}<e^{c_4\cdot i \ln^2 i}
 \end{multline*}

\end{proof}

Recurrence relations on Hahn polynomials (see e.g.
\cite[(1.5.3)]{KS} imply that
$$
 |Q^i_{w',z'-p,N+p-1}(x)|<e^{c'\cdot i \ln(i)},
$$
 where constant $c'$ does not depend on either $N$ or $x$.

Next note that
$$
 \frac{(N+p-1)^{w'+z'+1-p}}{(Q^i_{w',z'-p,N+p-1},Q^i_{w',z'-p,N+p-1})}
$$
is bounded from above by $e^{c''\cdot i \ln(i)}$.

For large enough $N$ and $l$ we obtain the following estimate
\begin{multline*}
(N+p-1)^{w'+z'+1-p}|B|= \\ \Biggl|\sum_{i=l+1}^{N+p-1}
(c_i^N)^{2\lfloor t\cdot N^2
\rfloor-2\lfloor s\cdot N^2 \rfloor}Q^i_{w',z'-p,N+p-1}((N+p-1)x)Q^i_{w',z'-p,N+p-1}((N+p-1)x)\\ \times
\frac{(N+p-1)^{w'+z'+1-p}}{(Q^i_{w',z'-p,N+p-1},Q^i_{w',z'-p,N+p-1})}\biggr)\Biggr|\\
<\sum_{i=l+1}^{N+p-1}e^{-c i\ln^2(i)+2c' i\ln(i) +c'' i\ln(i)}<
\sum_{i=l+1}^{N+p-1}e^{-\tilde c
i\ln^2(i)}<\sum_{i=l}^{\infty}e^{-\tilde c i}=\frac{e^{-\tilde c
l}}{1-e^{\tilde c l}}.
\end{multline*}
Observing that $\frac{e^{-\tilde c l}}{1-e^{-\tilde c l}}\to 0$ when
$l\to\infty$, completes the proof of Proposition
\ref{proposition_multy_distributions_convergence}.

\end{proof}

Since multidimensional distributions of
the process $J^N_{p,z',w'}(t)$ converge, we define
$J_{p,z',w'}(t)$ as a limit process. Formulas
\eqref{limit_multy_distribution} for the multidimensional
distributions imply that $J_{p,z',w'}(t)$ is a stationary Markov process
with initial distribution and transition probabilities as in Theorem
\ref{theorem_main_result}. Thus, Theorems \ref{Theorem_convergence_process} and \ref{theorem_main_result}
 are proved.

\smallskip

The Markov process $J_{p,z',w'}(t)$ has one interesting feature. Its
dynamical (space-time) \emph{correlation functions} can be expressed
as minors of a certain \emph{extended kernel}.

Let $\rho_n(x_1,t_1;x_2,t_2;\dots;x_n,t_n)$ be the $n$th correlation function
of $J_{p,z',w'}(t)$. Informally $\rho_n$ can be defined by
$$
 {\rm Prob}\{x_1\in J_{p,z',w'}(t_1),\dots, x_n \in
J_{p,z',w'}(t_n)\}=\rho_n(x_1,t_1;\dots;x_n,t_n)dx_1\dots dx_n.
$$

\begin{proposition}
 \label{proposition_determinantal_kernel}
Consider $n$ distinct points $(x_1,t_1),\dots,(x_n,t_n)$. We have
$$
\rho_n(x_1,t_1;x_2,t_2;\dots;x_n,t_n)= \det[{\rm Ker}_{p,z',w'}(x_i,t_i;x_j,t_j)]_{i,j=1,\dots,n}
,$$
where
$$
 {\rm Ker}_{p,z',w'}(x,t; y,s)=\begin{cases} \sum\limits_{i=0}^{p-1}
e^{(t-s)K_{p,z',w'}(i)} {j}_{p,z',w'}^i(x) {j}_{p,z',w'}^i(y),
&\text{if }
t\ge s,\\
-\sum\limits_{i=p}^\infty e^{(t-s)K_{p,z',w'}(i)} {j}_{p,z',w'}^i(x)
{j}_{p,z',w'}^i(y), &\text{if } t<s.
\end{cases}
$$
\end{proposition}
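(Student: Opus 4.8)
The plan is to derive the determinantal formula for the correlation functions from the determinantal form of the multi-time distributions \eqref{limit_multy_distribution}, using the Eynard--Mehta theorem (or, equivalently, the general machinery for biorthogonal ensembles / non-intersecting paths). First I would rewrite the joint density in \eqref{limit_multy_distribution} in the standard product form
$$
\rho = \mathrm{const}\cdot \det[\phi_0(x_j^1)]\,\prod_{m=1}^{n-1}\det[T_m(x_i^m,x_j^{m+1})]\,\det[\psi_n(x_i^n)],
$$
absorbing the two boundary factors $\sqrt{\rho_{p,z',w'}}$ into the first and last determinants: since $\sqrt{\rho_{p,z',w'}(X)}$ factors as $\mathrm{const}\cdot V(X)\prod_i\sqrt{w_{p,z',w'}(x_i)}$ and $V(X)=\det[P_{i-1}(x_j)]$ for any monic family, I can take $\phi_0^{(i)}(x)=j^{i-1}_{p,z',w'}(x)$ (so that the first determinant is $\det[j^{i-1}_{p,z',w'}(x_j^1)]$, which differs from $V\cdot\prod\sqrt{w}$ only by a constant), and similarly for $\psi_n$. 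The transition kernels are $T_m(x,y)={\mathcal J}^{t_{m+1}-t_m}_{p,z',w'}(x,y)=\sum_{i\ge0}e^{-(t_{m+1}-t_m)K_{p,z',w'}(i)}j^i_{p,z',w'}(x)j^i_{p,z',w'}(y)$ (the normalizing factors $e^{(t_{m+1}-t_m)K_{p,z',w'}}$ are overall constants and play no role). The key structural point is that all objects — the functions $j^i$ and the kernels ${\mathcal J}^t$ — are built from the \emph{single} orthonormal family $\{j^i_{p,z',w'}\}$, with the heat-type semigroup $j^i\mapsto e^{-tK_{p,z',w'}(i)}j^i$ acting diagonally; this is exactly the situation where Eynard--Mehta applies cleanly.

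Next I would run the Eynard--Mehta computation. Define $\psi^{(i)}_n$ so that the last determinant reads $\det[\psi^{(i)}_n(x^n_j)]$ with $\psi^{(i)}_n(x)=j^{i-1}_{p,z',w'}(x)$ as well, and set $\phi^{(i)}_\ell$ to be the push-forward of $\phi^{(i)}_0$ through $T_1\cdots T_{\ell-1}$, i.e. $\phi^{(i)}_\ell(x)=e^{-(t_\ell-t_1)K_{p,z',w'}(i-1)}j^{i-1}_{p,z',w'}(x)$ for $1\le i\le p$. The Gram-type matrix $M=[\,(\phi^{(i)}_0, T_1\cdots T_{n-1}\psi^{(j)}_n)\,]_{i,j=1}^p$ is then \emph{diagonal}, because $\{j^i\}$ is orthonormal and the $T_m$ act diagonally on this basis — its $(i,i)$ entry is $e^{-(t_n-t_1)K_{p,z',w'}(i-1)}$, in particular $M$ is invertible (all exponentials are positive). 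Consequently the Eynard--Mehta kernel has the form
$$
{\rm Ker}(x,t_a;y,t_b)= \sum_{i,j=1}^p \phi^{(i)}_a(x)\,(M^{-1})_{ij}\,[T_b\cdots T_{n-1}\psi^{(j)}_n](y)\;-\;\mathbf 1_{a>b}\,(T_{b}\cdots T_{a-1})(x,y),
$$
and plugging in the diagonal data collapses the double sum to a single sum $\sum_{i=0}^{p-1}$ over the "occupied" levels, while the subtraction term, evaluated via $T_b\cdots T_{a-1}=\sum_{i\ge0}e^{-(t_a-t_b)K_{p,z',w'}(i)}j^i(x)j^i(y)$, contributes the full sum over all $i\ge0$. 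The difference of the two pieces is precisely $\sum_{i\ge p}^{\infty}(\cdots)$ with the opposite sign, matching the two cases in the statement (one checks that $t\ge s$ versus $t<s$ corresponds to $a\le b$ versus $a>b$ after relabelling, and the sign $e^{(t-s)K}$ vs.\ $e^{-(t-s)K}$ works out because in the $t<s$ branch the exponent from $T_b\cdots T_{a-1}$ carries $t_a-t_b<0$).

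The main obstacle is \textbf{not} the algebra but the \emph{analytic justification} of applying Eynard--Mehta in an infinite-dimensional / non-compact setting: the state space is the continuum $[0,1]$, the "number of levels" $i$ ranges over an infinite set, and one must be sure the relevant series converge and that the correlation functions are genuinely given by the finite $p\times p$ determinant rather than a Fredholm-type object. Here I would lean on the estimates already established in the proof of Proposition \ref{proposition_multy_distributions_convergence} — namely the bounds $e^{-(t-s)K_{p,z',w'}(i)}<e^{-(t-s)i(i-1)}$ and $|j^i_{p,z',w'}(x)|<e^{c(x)i}$ on compacts of $(0,1)$ — which guarantee that ${\mathcal J}^t_{p,z',w'}(x,y)$ is a smooth, rapidly convergent kernel for every $t>0$ and that ${\rm Ker}_{p,z',w'}$ is well defined for $t\ne s$; the case $t=s$ of a single time is just the fixed-time determinantal structure of the Jacobi orthogonal polynomial ensemble $\rho_{p,z',w'}$, for which ${\rm Ker}_{p,z',w'}(x,t;y,t)=\sum_{i=0}^{p-1}j^i_{p,z',w'}(x)j^i_{p,z',w'}(y)$ is the classical Christoffel--Darboux projection. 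I would also remark that, alternatively, one can avoid invoking a continuum Eynard--Mehta theorem altogether by working at finite $N$: the pre-limit process $U^N_{p,z',w'}$ lives on the finite set $\{0,\dots,N+p-1\}$, where the discrete Eynard--Mehta theorem of Borodin--Ferrari--Prähofer--Sasamoto applies directly (all data being built from the orthonormal Hahn family $\{f^i_N\}$ via Proposition \ref{proposition_U_tr_prob_long}), giving a finite-$N$ extended kernel built from the $f^i_N$; then passing to the limit using the same estimates as in Proposition \ref{proposition_multy_distributions_convergence} yields ${\rm Ker}_{p,z',w'}$, and the determinantal formula for $\rho_n$ is preserved under this convergence. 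Either route works; the finite-$N$ route is cleaner because it sidesteps all measure-theoretic subtleties and reduces everything to the already-proven convergence $(N+p-1)w_{N,k}\to{\mathcal J}^{t-s}_{p,z',w'}$.
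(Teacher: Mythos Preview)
Your proposal is correct and follows exactly the route the paper indicates: the paper does not give a proof at all, merely stating that the result ``follows from Theorem \ref{theorem_main_result} and Eynard--Metha theorem'' with references to \cite{EM} and \cite[Section 7.4]{BO_Meix}. You have supplied the details the paper omits --- the recasting of \eqref{limit_multy_distribution} into the standard Eynard--Mehta input, the diagonality of the Gram matrix in the orthonormal Jacobi basis, and the resulting collapse of the kernel into the two-case form --- and your alternative finite-$N$ route via $U^N_{p,z',w'}$ is a legitimate (indeed cleaner) variant of the same argument.
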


We are not going to give the proof of this claim here. It follows from Theorem \ref{theorem_main_result} and Eynard-Metha
theorem. See \cite{EM} and \cite[Section 7.4]{BO_Meix}.

\cite{S} is a good survey on determinantal point processes. Additional information about extended
kernels can be found in e.g. \cite{EM},\cite{J}, \cite{TW}.

\section{Markov semigroup of the limit process}

Let $\mathcal W_p$ be the \emph{Weyl chamber}.
$$
\mathcal W_p=\{(x_1,\dots,x_p)\in [0,1]^p:x_1\le x_2\le\dots\le x_p\}.
$$
Clearly, $\mathcal W_p$ is a state space of the process
$J_{p,z',w'}(t)$. Recall that $\mu_{p,z',w'}$ is the one-dimensional
distribution of $J_{p,z',w'}(t)$. According to Theorem
\ref{theorem_main_result}, $\mu_{p,z',w'}$ is a probability measure
on $\mathcal W_p$ given by its density $\rho_{p,z',w'}$.

Denote by $M_{p,z',w'}^t$ the Markov semigroup of the process $J_{p,z',w'}(t)$ on $L_2(\mathcal W_p,\mu_{p,z',w'})$.
$M_{p,z',w'}^t$ acts on a function $f(X)\in L_2(\mathcal W_p,\mu_{p,z',w'})$ as follows
$$
 (M_{p,z',w'}^tf)(X)=\int_{\mathcal W_p}F(Y)\tilde P^t_{p,z',w'}(Y\mid  X) dY,
$$
where the integration is performed with respect to the Lebesgue measure on $\mathcal W_p$.

In this section we are going to compute the eigenfunctions and the generator of $M_{p,z',w'}^t$.

Recall that $Jac^k_{z'-p,w'}(x)$ is Jacobi polynomial of degree $k$
on $(0,1)$. Now we want to introduce \emph{multi-dimensional Jacobi
polynomials}. Let $\lambda=(\lambda_1\ge\lambda_2\ge\dots\ge
\lambda_p\ge 0)$ be a partition of $n$. Denote
$$
 Jac^\lambda_{z-p',w'}(x_1,\dots,x_p)=\frac{\det[Jac^{\lambda_i+p-i}_{z'-p,w'}(x_j)]_{i,j=1,\dots,p}}{\prod\limits_{i>j}(x_i-x_j)}.
$$
 It is clear that $Jac^\lambda_{z'-p,w'}(x_1,\dots,x_p)$ is a symmetric polynomial in $p$ variables of degree $n=\lambda_1+\dots+\lambda_p$.

We may view $Jac^\lambda_{z'-p,w'}(x_1,\dots,x_p)$ as a function on
$\mathcal W_p$. In what follows we use the same notation for
these functions.

It is well-known that the polynomials $Jac^\lambda_{z'-p,w'}(x_1,\dots,x_p)$ form an orthogonal basis in $L_2(\mathcal W_p,\mu_{p,z',w'})$.

\begin{theorem}
\label{Theorem_Semigroup_eigenfunctions}
 Polynomials $Jac^\lambda_{z'-p,w'}(x_1,\dots,x_p)$ are eigenfunctions of the Markov semigroup $M_{p,z',w'}^t$.
 $$
  M_{p,z',w'}^t(Jac^\lambda_{z'-p,w'})=c(\lambda,t)Jac^\lambda_{p,z',w'},
 $$
 where
 $$
 c(\lambda,t)=e^{t\left(\sum\limits_{i=1}^p K_{p,z',w'}(p-i)-
 K_{p,z',w'}(\lambda_i+p-i)\right)}
 $$
\end{theorem}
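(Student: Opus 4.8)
The plan is to reduce the multi-dimensional statement to the known one-dimensional spectral properties of the transition kernel ${\mathcal J}^t_{p,z',w'}$, exploiting the determinantal (Karlin–McGregor-type) structure of the transition density from Theorem \ref{theorem_main_result}. First I would record the one-dimensional fact that underlies everything: the operator with kernel ${\mathcal J}^t_{p,z',w'}(x,y)$ on $L_2([0,1])$ (Lebesgue) is diagonalized by the functions $j^k_{p,z',w'}$, with
$$
 \int_0^1 {\mathcal J}^t_{p,z',w'}(x,y)\, j^k_{p,z',w'}(y)\, dy = e^{-tK_{p,z',w'}(k)}\, j^k_{p,z',w'}(x),
$$
which is immediate from the series definition of ${\mathcal J}^t_{p,z',w'}$ and the orthonormality of the $j^k_{p,z',w'}$ in $L_2([0,1])$.

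Next I would write out the action of $M^t_{p,z',w'}$ on $Jac^\lambda_{z'-p,w'}$ explicitly. Using the density ${\mathcal P}^t_{p,z',w'}(Y\mid X) = \frac{\sqrt{\rho_{p,z',w'}(Y)}}{\sqrt{\rho_{p,z',w'}(X)}} e^{tK_{p,z',w'}} \det[{\mathcal J}^t_{p,z',w'}(x_i,y_j)]$ and the Weyl-type formula $Jac^\lambda_{z'-p,w'}(Y) = \det[Jac^{\lambda_i+p-i}_{z'-p,w'}(y_j)]/\prod_{i>j}(y_i-y_j)$, together with $\rho_{p,z',w'}(Y) = B_{p,z',w'}\prod_{i<j}(y_i-y_j)^2\prod w_{p,z',w'}(y_i)$, the square root $\sqrt{\rho_{p,z',w'}(Y)}$ contributes exactly the Vandermonde $\prod_{i<j}|y_i-y_j|$ times $\prod \sqrt{w_{p,z',w'}(y_i)}$. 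On the Weyl chamber the ordering makes $\prod_{i<j}(y_j-y_i)=\prod_{i<j}|y_i-y_j|$, so in the product $\sqrt{\rho_{p,z',w'}(Y)}\, Jac^\lambda_{z'-p,w'}(Y)$ the Vandermonde in the denominator of the multi-Jacobi polynomial cancels against the one from $\sqrt{\rho_{p,z',w'}(Y)}$, leaving $\det[ Jac^{\lambda_i+p-i}_{z'-p,w'}(y_j)\sqrt{w_{p,z',w'}(y_j)} ] = \det[\,\text{const}_i\cdot j^{\lambda_i+p-i}_{p,z',w'}(y_j)\,]$. Then I would apply the Andréief (Gram) identity to the integral $\int_{\mathcal W_p}\det[{\mathcal J}^t(x_i,y_j)]\det[j^{\lambda_i+p-i}(y_j)]\,dY$, extending the integral from $\mathcal W_p$ to $[0,1]^p$ by symmetry at the cost of a factor $1/p!$ which is absorbed; this collapses the double determinant to $p!\,\det\big[\int_0^1 {\mathcal J}^t(x_i,y)\, j^{\lambda_j+p-j}(y)\,dy\big] = p!\,\det\big[ e^{-tK_{p,z',w'}(\lambda_j+p-j)}\, j^{\lambda_j+p-j}(x_i)\big]$ by the one-dimensional eigenrelation above. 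Pulling the diagonal scalars out of the determinant yields the eigenvalue $\prod_{j=1}^p e^{-tK_{p,z',w'}(\lambda_j+p-j)}$ times $\det[j^{\lambda_i+p-i}(x_j)]$, which after restoring the constants $\text{const}_i$ and dividing by $\sqrt{\rho_{p,z',w'}(X)}$ reassembles into $Jac^\lambda_{z'-p,w'}(X)$. Combining with the overall prefactor $e^{tK_{p,z',w'}} = e^{t\sum_{i=0}^{p-1}K_{p,z',w'}(i)} = e^{t\sum_{i=1}^p K_{p,z',w'}(p-i)}$ gives precisely $c(\lambda,t) = e^{t(\sum_{i=1}^p K_{p,z',w'}(p-i) - K_{p,z',w'}(\lambda_i+p-i))}$.

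The main obstacle is not the algebra of determinants but the analytic justification of interchanging the infinite sum defining ${\mathcal J}^t_{p,z',w'}$ with the integration over $\mathcal W_p$, so that Andréief's identity and the termwise eigenrelation can be applied. For $t>0$ this is handled by the same estimates already used in the proof of Proposition \ref{proposition_multy_distributions_convergence}: the bound $e^{-tK_{p,z',w'}(i)}<e^{-ti(i-1)}$ together with $|j^i_{p,z',w'}(x)|<e^{c(x)i}$ (uniform on compacts of $(0,1)$) gives absolute, locally uniform convergence of the series, and the Jacobi weights provide integrability near the endpoints; thus Fubini applies and the formal computation is rigorous. For $t=0$ the statement is trivial since $M^0_{p,z',w'}=\mathrm{Id}$ and $c(\lambda,0)=1$. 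Finally, I would note that since the $Jac^\lambda_{z'-p,w'}$ form an orthogonal basis of $L_2(\mathcal W_p,\mu_{p,z',w'})$ (as recalled before the statement), having identified all of them as eigenfunctions completely describes the semigroup; in particular $M^t_{p,z',w'}$ is self-adjoint with discrete spectrum $\{c(\lambda,t)\}$.
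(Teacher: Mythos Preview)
Your proposal is correct and follows essentially the same route as the paper: write out the integral defining $M^t_{p,z',w'}Jac^\lambda_{z'-p,w'}$, cancel the Vandermonde against the one in $\sqrt{\rho_{p,z',w'}(Y)}$, symmetrize from $\mathcal W_p$ to $(0,1)^p$ at the cost of $1/p!$, and then collapse the product of two determinants via Andr\'eief/Cauchy--Binet together with the orthogonality of the Jacobi polynomials. The only difference is presentational---you isolate the one-dimensional eigenrelation for ${\mathcal J}^t_{p,z',w'}$ up front and add explicit Fubini-type justification for $t>0$, whereas the paper leaves the orthogonality step inside the determinant and omits the analytic discussion.
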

\begin{proof}
 \begin{multline*}
 (M_{p,z',w'}^tJac^\lambda_{z'-p,w'})(x_1,x_2,\dots,x_p)
 \\=
  \int_{y_1<\dots< y_p}Jac^\lambda_{z'-p,w'}(y_1,\dots,y_p)
\frac{\sqrt{\rho_{p,z',w'}(y_1,\dots,y_p)}}{\sqrt{\rho_{p,z',w'}(x_1,\dots,x_p)}}
\\ \times e^{tK_{p,z',w'}}\cdot\det[{\mathcal
J}_{p,z',w'}^t(x_i,y_j)]_{i,j=1,2,\dots,p} dy_1\dots dy_p
\\=\frac{e^{tK_{p,z',w'}}}{\prod_{i>j}(x_i-x_j)}
\int_{y_1<\dots<y_p}Jac^\lambda_{z'-p,w'}(y_1,\dots,y_p)\cdot
\prod_{i>j}(y_i-y_j)
\\ \times
\frac{\prod_{i=1,p}\sqrt w_{p,z',w'}(y_i)}{\prod_{i=1,p}\sqrt
w_{p,z',w'}(x_i)}
 \cdot\det[{\mathcal J}_{p,z',w'}^t(x_i,y_j)]_{i,j=1,2,\dots,p} dy_1\dots
 dy_p
\end{multline*}

Note that the integrand is a symmetric function in $y_1,\dots,y_p$. Thus, we can
integrate over $p$--dimensional cube $(0,1)^p$, instead of the simplex ${0<y_1<\dots<y_p<1}$.
We arrive to the following expression

\begin{multline*}
   \frac{e^{tK_{p,z',w'}}}{p!\prod_{i>j}(x_i-x_j)}
\int_{0<y_i<1}
\det[Jac^{\lambda_i+p-i}_{z'-p,w'}(y_j)]_{i,j=1,\dots,p}
\\ \times
\det\left[\sum_{n=0}^{\infty}e^{-tK_{p,z',w'}(n)}\frac{w_{p,z',w'}(y_j)Jac^n_{z'-p,w'}(y_j)Jac^n_{z'-p,w'}(x_i)}{(Jac^n_{z'-p,w'},Jac^n_{z'-p,w'})}\right]_{i,j=1,\dots,p}
dy_1\dots dy_p.
 \end{multline*}
Multiplying two matrices under determinants and using orthogonality
relations for the Jacobi polynomials we obtain the desired formula.
\end{proof}

Let $G_{p,z',w'}$ be the infinitesimal generator of the Markov semigroup $M_{p,z',w'}^t$.
Theorem \ref{Theorem_Semigroup_eigenfunctions} implies the following proposition.

\begin{proposition}
\label{proposition_basic_eigenfunctions} Polynomials
$Jac^\lambda_{z-p',w'}(x_1,\dots,x_p)$ are eigenfunctions of
$G_{p,z',w'}$.
 $$
  G_{p,z',w'}(Jac^\lambda_{z'-p,w'})=\tilde c(\lambda)Jac^\lambda_{z'-p,w'},
 $$
 where
 $$
 \tilde c(\lambda)=\sum\limits_{i=1}^p K_{p,z',w'}(p-i)-
 K_{p,z',w'}(\lambda_i+p-i)
 $$
\end{proposition}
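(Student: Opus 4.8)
The plan is to deduce this proposition directly from Theorem \ref{Theorem_Semigroup_eigenfunctions} by differentiating the eigenvalue relation in $t$ at $t=0$. Recall that the infinitesimal generator is defined by $G_{p,z',w'}f = \frac{d}{dt}\big|_{t=0} M_{p,z',w'}^t f$, the derivative being taken in $L_2(\mathcal W_p,\mu_{p,z',w'})$ (or even pointwise, since everything here is polynomial). Since $Jac^\lambda_{z'-p,w'}$ is an eigenfunction of $M_{p,z',w'}^t$ with eigenvalue $c(\lambda,t)$, and $c(\lambda,t)$ is an exponential in $t$, namely $c(\lambda,t)=e^{t\tilde c(\lambda)}$ with $\tilde c(\lambda)=\sum_{i=1}^p\left(K_{p,z',w'}(p-i)-K_{p,z',w'}(\lambda_i+p-i)\right)$, we get immediately
$$
G_{p,z',w'}(Jac^\lambda_{z'-p,w'}) = \frac{d}{dt}\bigg|_{t=0} c(\lambda,t)\, Jac^\lambda_{z'-p,w'} = \tilde c(\lambda)\, Jac^\lambda_{z'-p,w'}.
$$
This is the asserted formula.

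The steps, in order, are as follows. First I would record that $c(\lambda,0)=1$, so that $M_{p,z',w'}^0$ is the identity on each $Jac^\lambda_{z'-p,w'}$, consistent with $M^t$ being a semigroup with $M^0=\mathrm{Id}$. Second, I would note that $c(\lambda,t)=e^{t\tilde c(\lambda)}$ is differentiable in $t$ with derivative $\tilde c(\lambda)e^{t\tilde c(\lambda)}$, so at $t=0$ the derivative equals $\tilde c(\lambda)$. Third, since the map $t\mapsto M_{p,z',w'}^t Jac^\lambda_{z'-p,w'} = c(\lambda,t) Jac^\lambda_{z'-p,w'}$ is a scalar multiple of a fixed vector, its derivative at $t=0$ exists in $L_2(\mathcal W_p,\mu_{p,z',w'})$ and equals $\tilde c(\lambda) Jac^\lambda_{z'-p,w'}$; hence $Jac^\lambda_{z'-p,w'}$ lies in the domain of $G_{p,z',w'}$ and the eigenvalue equation holds.

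The only genuine subtlety — and the place I would be most careful — is the interchange of differentiation and the action of the semigroup, i.e.\ verifying that $Jac^\lambda_{z'-p,w'}$ actually belongs to the domain of the generator $G_{p,z',w'}$ rather than merely satisfying the eigenrelation formally. This is not an obstacle in substance: because the polynomials $\{Jac^\lambda_{z'-p,w'}\}$ form an orthogonal basis of $L_2(\mathcal W_p,\mu_{p,z',w'})$ (as noted in the excerpt) and $M_{p,z',w'}^t$ acts diagonally on this basis with eigenvalues $c(\lambda,t)$, the semigroup is automatically strongly continuous, and for each fixed $\lambda$ the one-dimensional invariant subspace spanned by $Jac^\lambda_{z'-p,w'}$ is contained in the domain of the generator with $G_{p,z',w'}$ acting as the scalar $\tilde c(\lambda)$. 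One may also argue more elementarily: since all objects in $\mathcal P^t_{p,z',w'}(Y\mid X)$ depend smoothly (indeed analytically) on $t$ for $t>0$ and the eigenrelation extends continuously to $t=0$, the pointwise $t$-derivative at $t=0$ of $(M_{p,z',w'}^t Jac^\lambda_{z'-p,w'})(X)$ equals $\tilde c(\lambda) Jac^\lambda_{z'-p,w'}(X)$ for every $X\in\mathcal W_p$. Either way the conclusion follows, and no further computation beyond reading off $\frac{d}{dt}\big|_{t=0}$ of an exponential is required.
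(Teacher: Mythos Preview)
Your proposal is correct and follows exactly the approach the paper takes: the paper simply states that this proposition is implied by Theorem \ref{Theorem_Semigroup_eigenfunctions} (the semigroup eigenfunction theorem) without writing any details, and you have spelled out that implication by differentiating the relation $M_{p,z',w'}^t Jac^\lambda_{z'-p,w'}=c(\lambda,t)Jac^\lambda_{z'-p,w'}$ at $t=0$. Your additional care about the domain of the generator is more than the paper provides.
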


Our next aim is to obtain explicit formulas for the generators $G_{p,z',w'}$.

Observe that the numbers $-K_{p,z',w'}(i)=-i(i+w'+z'+1-p)$ are
eigenvalues of the \emph{Jacobi differential operator} corresponding
to the eigenvectors $Jac^k_{z'-p,w'}(x)$. Namely, denote
$$
 D^{Jac}_{z'-p,w'}=x(1-x)\frac{d^2}{dx^2}+(w'+1-(w'+z'-p+2)x)\frac{d}{dx}.
$$
The following proposition holds (see e.g. \cite[(1.8.5)]{KS}).
\begin{proposition} We have
$$
 D^{Jac}_{z'-p,w'}(Jac^i_{z'-p,w'})=-K_{p,z',w'}(i)Jac^i_{z'-p,w'}.
$$
\end{proposition}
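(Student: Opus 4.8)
The plan is to deduce the eigenrelation directly from the Gauss hypergeometric differential equation. Recall that $\mathstrut_2F_1{ {a,b}\choose{c}}(\zeta)$ is the solution, analytic at $\zeta=0$, of
$$\zeta(1-\zeta)\frac{d^2y}{d\zeta^2}+\bigl(c-(a+b+1)\zeta\bigr)\frac{dy}{d\zeta}-ab\,y=0.$$
I would specialize $a=-i$, $b=i+z'+w'+1-p$, $c=z'-p+1$, so that $\mathstrut_2F_1{ {a,b}\choose{c}}(1-x)$ equals, up to the nonzero constant factor $(z'-p+1)_i/i!$, exactly $Jac^i_{z'-p,w'}(x)$ by the definition given above (with $\alpha=z'-p$, $\beta=w'$). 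With this choice one has $ab=-i(i+z'+w'+1-p)=-K_{p,z',w'}(i)$ and $a+b+1=z'+w'-p+2$.

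Next I would carry out the change of variable $\zeta=1-x$. Writing $Y(x)=y(1-x)$ gives $Y'(x)=-y'(1-x)$ and $Y''(x)=y''(1-x)$, so the term $\zeta(1-\zeta)y''$ becomes $x(1-x)Y''$, while $\bigl(c-(a+b+1)\zeta\bigr)y'$ becomes $-\bigl(c-(a+b+1)(1-x)\bigr)Y'$. Substituting $c=z'-p+1$ and $a+b+1=z'+w'-p+2$ and collecting the constant and the linear-in-$x$ parts, the coefficient of $Y'$ simplifies to exactly $w'+1-(w'+z'-p+2)x$. Hence the hypergeometric equation becomes
$$x(1-x)Y''+\bigl(w'+1-(w'+z'-p+2)x\bigr)Y'+K_{p,z',w'}(i)\,Y=0,$$
which is precisely $D^{Jac}_{z'-p,w'}Y=-K_{p,z',w'}(i)\,Y$. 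Since $Jac^i_{z'-p,w'}$ is a scalar multiple of $Y$ and $D^{Jac}_{z'-p,w'}$ is linear, the same relation holds for $Jac^i_{z'-p,w'}$, which is the assertion of the proposition.

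There is no genuine obstacle here: the statement is the classical Jacobi eigenrelation, namely $(1.8.5)$ of \cite{KS} after the affine rescaling of the interval from $(-1,1)$ to $(0,1)$. The only point demanding care is the bookkeeping of signs under the reflection $x\mapsto 1-x$: one must verify that $c-(a+b+1)=(z'-p+1)-(z'+w'-p+2)=-(w'+1)$, so that the reflected first-order coefficient is genuinely $w'+1-(w'+z'-p+2)x$ and not its negative, and that the sign of the zeroth-order term flips correctly, so that the eigenvalue comes out as $-K_{p,z',w'}(i)$ rather than $+K_{p,z',w'}(i)$.
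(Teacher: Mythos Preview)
Your derivation is correct. You specialize the hypergeometric differential equation to the parameters $a=-i$, $b=i+z'+w'+1-p$, $c=z'-p+1$, perform the change of variable $\zeta=1-x$, and check that the resulting operator is exactly $D^{Jac}_{z'-p,w'}$ with eigenvalue $-K_{p,z',w'}(i)$. The sign bookkeeping you flag is indeed the only delicate point, and you handle it correctly: $(a+b+1)-c=w'+1$ and $-ab=K_{p,z',w'}(i)$.

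The paper does not actually prove this proposition; it merely cites the classical Jacobi differential equation as \cite[(1.8.5)]{KS}. Your argument is therefore not a different route so much as a self-contained unpacking of that citation, adapted to the paper's convention of Jacobi polynomials on $(0,1)$ rather than $(-1,1)$. What your approach buys is that the reader does not have to translate between the $(-1,1)$ normalization in \cite{KS} and the $(0,1)$ normalization used here; the affine rescaling is absorbed into the substitution $\zeta=1-x$ you already perform.
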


Set
$$
 {\mathcal D}_{z',w',p}=\sum_{i=1}^{p}
 x_i(1-x_i)\frac{\partial^2}{\partial x_i^2}+(w'+1-(w'+z'-p+2)x_i)\frac{\partial }{\partial x_i}.
$$

Recall that if $X=(x_1,\dots,x_p)$ then $V(X)=\prod\limits_{i>j}(x_i-x_j)$.
\begin{theorem} We have
\label{theorem_generator}
 $$
  G_{p,z',w'}(f)=\frac{1}{V(X)}\cdot {\mathcal D}_{z',w',p}(V(X)\cdot f) +
  K_{p,z',w'}f,
 $$
 and the domain of definition of $ G_{p,z',w'}$ consists of all
 functions ${f\in L_2(\mathcal W_p,\mu_{p,z',w'})}$ such that
 $$\frac{1}{V(X)}\cdot {\mathcal D}_{z',w',p}(V(x)\cdot f) +
  K_{p,z',w'}f\in L_2(\mathcal W_p,\mu_{p,z',w'}).$$
\end{theorem}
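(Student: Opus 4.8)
The plan is to write $\tilde G$ for the operator $f\mapsto \frac{1}{V(X)}{\mathcal D}_{z',w',p}(V(X)f)+K_{p,z',w'}f$ on the domain stated in the theorem, to show that $\tilde G$ is diagonalized by the multidimensional Jacobi polynomials $Jac^\lambda_{z'-p,w'}$ with exactly the eigenvalues $\tilde c(\lambda)$ of Proposition~\ref{proposition_basic_eigenfunctions}, and then to upgrade the agreement of $\tilde G$ with $G_{p,z',w'}$ on this basis to the operator identity $G_{p,z',w'}=\tilde G$, which simultaneously pins down the domain.

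For the eigenvalue computation, recall $V(X)\,Jac^\lambda_{z'-p,w'}(X)=\det[g_i(x_j)]_{i,j=1,\dots,p}$ with $g_m=Jac^{\lambda_m+p-m}_{z'-p,w'}$, and expand the determinant as $\sum_{\tau}\operatorname{sgn}(\tau)\prod_{j=1}^p g_{\tau(j)}(x_j)$. Since ${\mathcal D}_{z',w',p}=\sum_{k=1}^p D^{Jac}_{z'-p,w'}$ with the $k$-th summand acting only in $x_k$, and since $D^{Jac}_{z'-p,w'}(g_m)=-K_{p,z',w'}(\lambda_m+p-m)\,g_m$, the $k$-th summand sends a monomial $\prod_j g_{\tau(j)}(x_j)$ to $-K_{p,z',w'}(\lambda_{\tau(k)}+p-\tau(k))\prod_j g_{\tau(j)}(x_j)$. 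Summing over $k$ and using that $k\mapsto\tau(k)$ is a bijection produces the factor $-\sum_{m=1}^p K_{p,z',w'}(\lambda_m+p-m)$, independent of $\tau$, so ${\mathcal D}_{z',w',p}(V(X)Jac^\lambda_{z'-p,w'})=-\bigl(\sum_{m=1}^p K_{p,z',w'}(\lambda_m+p-m)\bigr)V(X)Jac^\lambda_{z'-p,w'}$. Hence $\tilde G(Jac^\lambda_{z'-p,w'})=\bigl(K_{p,z',w'}-\sum_{m=1}^p K_{p,z',w'}(\lambda_m+p-m)\bigr)Jac^\lambda_{z'-p,w'}$, and since $K_{p,z',w'}=\sum_{i=0}^{p-1}K_{p,z',w'}(i)=\sum_{i=1}^p K_{p,z',w'}(p-i)$ this eigenvalue is precisely $\tilde c(\lambda)$, i.e. $\tilde G=G_{p,z',w'}$ on every $Jac^\lambda_{z'-p,w'}$.

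To turn this into an operator identity I would first note that $\mu_{p,z',w'}$ is reversible: by Theorem~\ref{theorem_main_result}, $\rho_{p,z',w'}(X){\mathcal P}^t_{p,z',w'}(Y\mid X)=\sqrt{\rho_{p,z',w'}(X)\rho_{p,z',w'}(Y)}\,e^{tK_{p,z',w'}}\det[{\mathcal J}^t_{p,z',w'}(x_i,y_j)]$ is symmetric in $X\leftrightarrow Y$ (the matrix is transposed), so $M^t_{p,z',w'}$ and therefore $G_{p,z',w'}$ are self-adjoint on $L_2(\mathcal W_p,\mu_{p,z',w'})$. Because the $Jac^\lambda_{z'-p,w'}$ form an orthogonal eigenbasis, their finite linear combinations are a core for $G_{p,z',w'}$. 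The operator $\tilde G$ taken with the maximal domain $\{f\in L_2:\tilde Gf\in L_2\}$ is closed and contains this core, so $G_{p,z',w'}\subseteq\tilde G$; and if $\tilde G$ is symmetric, then $\tilde G\subseteq\tilde G^{\ast}\subseteq G_{p,z',w'}^{\ast}=G_{p,z',w'}$, giving $G_{p,z',w'}=\tilde G$, domains included.

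The main obstacle is exactly that symmetry — in fact one only needs $\langle\tilde Gf,\phi\rangle_{\mu}=\langle f,\tilde G\phi\rangle_{\mu}$ for $f$ in the maximal domain and $\phi$ a multidimensional Jacobi polynomial, i.e. the vanishing of all boundary terms in the relevant integration by parts. In one variable $D^{Jac}_{z'-p,w'}$ is the Jacobi Sturm--Liouville operator $\frac1{\omega}\bigl(\omega\,x(1-x)\,u'\bigr)'$ with $\omega(x)=x^{w'}(1-x)^{z'-p}$, and $\omega(x)x(1-x)$ vanishes at $x=0,1$, which controls the endpoints $x_i\in\{0,1\}$; on a hyperplane $x_i=x_j$ the antisymmetric function $V(X)f$ vanishes to first order, which controls the diagonal faces. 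Making this precise for every $f$ in the maximal domain (rather than just for polynomials) is the delicate Sturm--Liouville point; alternatively it follows from the Doob $h$-transform picture of Section~7, which realizes $\tilde G$ as the $h$-transform with $h=V$ of $p$ independent Jacobi diffusions and thereby identifies the self-adjoint realization and its domain directly, or one may simply adopt the eigen-expansion as the definition of $\mathrm{Dom}(G_{p,z',w'})$ and check, via the computation above, that $G_{p,z',w'}f=\tilde Gf$ there.
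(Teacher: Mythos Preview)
Your core computation is exactly the paper's: expand $V(X)\,Jac^\lambda_{z'-p,w'}$ as $\sum_\sigma\operatorname{sgn}(\sigma)\prod_j Jac^{\lambda_{\sigma(j)}+p-\sigma(j)}_{z'-p,w'}(x_j)$, apply ${\mathcal D}_{z',w',p}$ term by term using that each $Jac^n_{z'-p,w'}$ is an eigenfunction of $D^{Jac}_{z'-p,w'}$ with eigenvalue $-K_{p,z',w'}(n)$, and match the resulting eigenvalue $K_{p,z',w'}-\sum_m K_{p,z',w'}(\lambda_m+p-m)$ with $\tilde c(\lambda)$ from Proposition~\ref{proposition_basic_eigenfunctions}. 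The paper stops there, simply declaring that checking the identity on the orthogonal basis $\{Jac^\lambda_{z'-p,w'}\}$ is sufficient.

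Where you differ is that you actually engage with why agreement on the basis upgrades to an operator identity with the stated domain: you invoke reversibility of $\mu_{p,z',w'}$ to get self-adjointness of $G_{p,z',w'}$, argue that finite combinations of $Jac^\lambda$ form a core, and then try to sandwich $G_{p,z',w'}\subseteq\tilde G\subseteq G_{p,z',w'}$ via symmetry of $\tilde G$. This is more scrupulous than the paper, which treats the domain claim as essentially formal. Your honest acknowledgment that the symmetry of $\tilde G$ on its \emph{maximal} domain requires controlling boundary terms (at $x_i\in\{0,1\}$ and on the walls $x_i=x_j$) is correct---this is a genuine Sturm--Liouville issue that neither you nor the paper fully resolves, and your fallback of defining the domain spectrally and checking $\tilde G=G_{p,z',w'}$ there is effectively what the paper does implicitly. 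In short: same argument, with you supplying the operator-theoretic scaffolding the paper omits.
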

{\bf Remark.} After some simplifications the generator $G_{p,z',w'}$ can be alternatively expressed as
$$
G_{p,z',w'}(f)={\mathcal D}_{z',w',p}(f)+\sum_{i=1}^p \left(x_i(1-x_i)\sum_{j\neq i}\frac{1}{x_i-x_j}\right)\frac{\partial f}{\partial x_i}
$$
\begin{proof}
 It is sufficient to verify that
 $$G_{p,z',w'}(Jac^\lambda_{z-p',w'}) = \frac{1}{V(X)}\cdot {\mathcal D}_{z',w',p}\bigl(V(x)\cdot Jac^\lambda_{z-p',w'}\bigr) +
  K_{p,z',w'}Jac^\lambda_{z-p',w'}.$$
 We have
 \begin{multline*}\frac{1}{V(X)}\cdot {\mathcal D}_{z',w',p}\bigl(V(x)\cdot Jac^\lambda_{z-p',w'}\bigr) +
  K_{p,z',w'}Jac^\lambda_{z-p',w'}
  \\=
  \frac{ {\mathcal D}_{z',w',p}\det[Jac^{\lambda_i+p-i}_{z'-p,w'}(x_j)]_{i,j=1,\dots,p}}{V(X)}+
  K_{p,z',w'}Jac^\lambda_{z-p',w'}.
  \end{multline*}
  Let us write the determinant as the alternating sum of products
  $$
   A_\sigma=\prod\limits_{j=1}^p
   Jac^{\lambda_j+p-j}_{z'-p,w'}(x_{\sigma(j)}).
  $$
  (Here $\sigma\in S_p$ is an arbitrary permutation.)

   Proposition \ref{proposition_basic_eigenfunctions} implies that
   $${\mathcal D}_{z',w',p} A_\sigma=\left(-\sum_{j=1}^p
  K_{p,z',w'}(\lambda_j+p-j)\right)A_\sigma.$$
  Consequently,
  \begin{multline*}
   \frac{ {\mathcal D}_{z',w',p}\det[Jac^{\lambda_i+p-i}_{z'-p,w'}(x_j)]_{i,j=1,\dots,p}}{V(X)}+
  K_{p,z',w'}Jac^\lambda_{z-p',w'}
  \\=
  \Biggl(K_{p,z',w'}- \biggl(\sum_{j=1}^p
  K_{p,z',w'}(\lambda_j+p-j)\biggr)\Biggr)Jac^\lambda_{z-p',w'}=G_{p,z',w'}(Jac^\lambda_{z-p',w'})
  \end{multline*}

\end{proof}

\section{Doob h-transform interpretation}

In this section we explain that the process $J_{p,z',w'}(t)$ can be
viewed as $p$ independent identically distributed diffusions
conditioned (in the sense of Doob) never to collide, i.e. it is
\emph{the Doob h-transform} of $p$ independent processes.

\begin{proposition}
\label{proposition_eigen_vondermont} We have
 $$ {\mathcal D}_{z',w',p}V(X)=-K_{p,z',w'}V(X).$$
\end{proposition}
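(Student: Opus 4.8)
The plan is to view the Vandermonde $V(X)=\prod_{i>j}(x_i-x_j)$ as the antisymmetrization of the monomial $x_1^{p-1}x_2^{p-2}\cdots x_p^0$, and then apply the operator $\mathcal D_{z',w',p}$ term by term. Concretely, write $V(X)=\sum_{\sigma\in S_p}\operatorname{sgn}(\sigma)\prod_{j=1}^p x_{\sigma(j)}^{p-j}$, or equivalently $V(X)=\det[x_j^{p-i}]_{i,j=1,\dots,p}$. Since $\mathcal D_{z',w',p}$ is a sum of one-variable operators $D^{Jac}_{z'-p,w'}$ acting on distinct variables, it suffices to understand how the one-variable operator acts on a pure power $x^{k}$.

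The key computational step is the following. The monomial $x^{p-1}$ is, up to a nonzero scalar, the leading term of the Jacobi polynomial $Jac^{p-1}_{z'-p,w'}(x)$, but more to the point I only need the fact that the top-degree monomial $x^m$ is an eigenvector of $D^{Jac}_{z'-p,w'}$ \emph{modulo lower-degree terms}. Indeed, applying $D^{Jac}_{z'-p,w'}=x(1-x)\frac{d^2}{dx^2}+(w'+1-(w'+z'-p+2)x)\frac{d}{dx}$ to $x^m$, the degree-$m$ part of the output is $\bigl(-m(m-1)-(w'+z'-p+2)m\bigr)x^m=-m(m+w'+z'+1-p)x^m=-K_{p,z',w'}(m)x^m$, while all other contributions have degree $m-1$. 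Hence $D^{Jac}_{z'-p,w'}(x^m)=-K_{p,z',w'}(m)x^m+(\text{polynomial of degree}\le m-1)$.

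Now apply $\mathcal D_{z',w',p}$ to a single product $\prod_{j=1}^p x_{\sigma(j)}^{p-j}$. Each summand of $\mathcal D_{z',w',p}$ differentiates in one variable $x_{\sigma(j)}$, producing $-K_{p,z',w'}(p-j)$ times the original product, plus terms in which the exponent of $x_{\sigma(j)}$ has dropped by one (or more) while all other exponents are unchanged. Summing over $j$ gives $\mathcal D_{z',w',p}\bigl(\prod_j x_{\sigma(j)}^{p-j}\bigr)=\bigl(-\sum_{j=1}^p K_{p,z',w'}(p-j)\bigr)\prod_j x_{\sigma(j)}^{p-j}+(\text{lower terms})=-K_{p,z',w'}\prod_j x_{\sigma(j)}^{p-j}+(\text{lower terms})$, recalling $K_{p,z',w'}=\sum_{i=0}^{p-1}K_{p,z',w'}(i)=\sum_{j=1}^p K_{p,z',w'}(p-j)$. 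Antisymmetrizing over $\sigma$, the main terms assemble into $-K_{p,z',w'}V(X)$, and the point is that all the ``lower terms'' must cancel: $\mathcal D_{z',w',p}$ maps symmetric/antisymmetric polynomials to polynomials of the same symmetry type, $\mathcal D_{z',w',p}V(X)$ is antisymmetric, and any nonzero antisymmetric polynomial has degree at least $\binom{p}{2}=\deg V$; since each ``lower term'' has total degree $<\binom{p}{2}$, its full antisymmetrization vanishes.

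The main obstacle is making the last cancellation argument clean. There are two equivalent routes: (i) the structural argument just sketched — $\mathcal D_{z',w',p}$ preserves antisymmetry, and $\binom{p}{2}$ is the minimal degree of a nonzero antisymmetric polynomial, so the degree-$<\binom{p}{2}$ remainder is forced to be zero after antisymmetrization; or (ii) a direct computation showing that $\mathcal D_{z',w',p}$ applied to $\det[x_j^{p-i}]$ can be expanded by linearity of the determinant in its rows, with the $i$-th row replaced by $D^{Jac}_{z'-p,w'}(x^{p-i})$, and then using that adding lower-degree polynomials to rows of the Vandermonde matrix does not change its determinant (row reduction), one recovers exactly $-K_{p,z',w'}\det[x_j^{p-i}]$. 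I would present route (ii) as it avoids invoking the minimal-degree fact and is self-contained: expand $\mathcal D_{z',w',p}V(X)$ as a sum over which row of the matrix $[x_j^{p-i}]$ gets hit, replace that row by $-K_{p,z',w'}(p-i)x_j^{p-i}+(\text{lower degree in }x_j)$, and kill the lower-degree contributions by column operations, leaving $\sum_i\bigl(-K_{p,z',w'}(p-i)\bigr)V(X)=-K_{p,z',w'}V(X)$.
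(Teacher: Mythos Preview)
Your argument is correct. Both routes you sketch work: route (i) is clean once you note that $\mathcal D_{z',w',p}$ is $S_p$-invariant (hence preserves antisymmetry), does not raise degree, and that every nonzero antisymmetric polynomial is divisible by $V(X)$; route (ii) also works, though the passage from ``$\sum_j D_j$ hits column $j$'' to ``sum over which \emph{row} gets hit'' uses the identity
\[
\sum_{j}\det[A\mid \text{col }j\leftarrow B_{\cdot j}]=\sum_{i}\det[A\mid \text{row }i\leftarrow B_{i\cdot}],
\]
which is just the linearization $\frac{d}{dt}\det(A+tB)\big|_{t=0}$ expanded two ways. One slip: the lower-degree piece in the modified row $i$ is a scalar multiple of row $i+1$, so you kill it by a \emph{row} operation, not a column operation.

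The paper's proof is genuinely different. It first remarks that the statement follows from Proposition~\ref{proposition_basic_eigenfunctions} and Theorem~\ref{theorem_generator} (setting $\lambda=\varnothing$ there gives exactly $\mathcal D_{z',w',p}V=-K_{p,z',w'}V$), and then gives an independent proof quoting a lemma of K\"onig--O'Connell: for any operator of the form $\sum_i(x_i^2+ax_i+b)\partial_i^2+(-\tfrac{2}{3}(p-2)x_i+c)\partial_i$ one has $GV(X)=0$. Subtracting such a $G$ from $\mathcal D_{z',w',p}$ leaves only a multiple of the Euler operator $\sum_i x_i\partial_i$, which acts on the homogeneous $V(X)$ by $\binom{p}{2}$; matching constants recovers $-K_{p,z',w'}$. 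So the paper trades your determinant/antisymmetry bookkeeping for an external harmonicity lemma plus Euler's identity. Your approach is more self-contained; the paper's is shorter if one is willing to cite \cite{KO}. A small refinement of your route (ii): writing $V(X)$ as $\det[Jac^{p-i}_{z'-p,w'}(x_j)]$ (same determinant, since the Jacobi polynomials are monic up to constants in the right degrees) makes the lower-order terms disappear entirely, because $Jac^{p-i}$ is an exact eigenfunction of $D^{Jac}_{z'-p,w'}$; this is essentially how the paper proves Theorem~\ref{theorem_generator}.
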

\begin{proof} This claim follows from Proposition
\ref{proposition_basic_eigenfunctions} and Theorem
\ref{theorem_generator}, but we also present an independent proof.
\begin{lemma}
\label{lemma_harmonicity_Konig} For arbitrary $a,b,c\in \mathbb R$
set
$$ (G f)(X)=\sum_{i=1}^{p}(x^2+ax+b)\frac{\partial^2}{\partial
x_i^2}+\left(-\frac{2}{3}(p-2)x_i+c\right)\frac{\partial}{\partial
x_i}.
$$
Then $GV(X)=0$.
\end{lemma}
\begin{proof}
 See \cite[Lemma 3.1]{KO}. Note that originally there was a misprint in
 \cite{KO}, the minus sign was missing.
\end{proof}

Lemma \ref{lemma_harmonicity_Konig} implies that
 \begin{multline*}
  {\mathcal D}_{z',w',p}V(X)=\left(-\frac{2}{3}(p-2)-(w'+z'-p+2)\right) \sum_{i=1}^p
  x_i\frac{\partial}{\partial
  x_i}V(X)
  \\=-\frac{p(p-1)}{2}\left(-\frac{1}{3}(p-2)+w'+z'\right)V(X).
 \end{multline*}
 To finish the proof let us compute explicitly $K_{p,z',w'}$:
 \begin{multline*}
  K_{p,z',w'}=\sum_{i=0}^{p-1}K_{p,z',w'}(i)=\sum_{i=0}^{p-1}
  (i^2+i(w'+z'-p+1))
  \\=\frac{p(p-1)(2p-1)}{6}+\frac{p(p-1)}{2}(w'+z'-p+1)
  \\=
  \frac{p(p-1)}{2}\left( \frac{2p-1}{3}-p+1+w'+z'\right)=
  \frac{p(p-1)}{2}\left(-\frac{1}{3}(p-2)+w'+z'\right).
 \end{multline*}
\end{proof}

Now consider $p$ independent diffusions with generators
$D^{Jac}_{z'-p,w'}$. We call these diffusions Jacobi processes; they
can be viewed as solutions of certain stochastic differential
equations.

The infinitesimal generator of these $p$ diffusions is precisely
${\mathcal D}_{z',w',p}$. Proposition
\ref{proposition_eigen_vondermont} implies that we can construct new
Markov process, which is a variant of Doob $h$--transform of $p$
diffusions. Here function $h$ is $V(X)$. The infinitesimal generator
of Doob $h$ transform with $h=V(X)$ is precisely  $G_{p,z',w'}$. The
Doob $h$--transforms and its properties were studied in numerous
papers, see e.g. \cite{D}, \cite{Konig}.

Thus, our process $J_{p,z',w'}(t)$ can be interpreted as Doob
$h$--transform of $p$ independent diffusions. In many examples Doob
$h$--transform coincides with the conditional process, given that
there is no collision of the components. In our case, clearly, there
are no collisions, trajectories of $J_{p,z',w'}(t)$ are collections
of $p$ nonintersecting paths. However the question whether
$J_{p,z',w'}(t)$ is precisely $p$ independent diffusions conditioned
on never having a collision between any two of its components,
remains open.

\section{Appendix. Some formulas for Hahn and Jacobi polynomials}

The following formulas can be found in e.g. \cite{KS}.

\begin{lemma} For $\alpha>-1$, $\beta>-1$,
\label{Lemma_Hahn_norm}
\begin{multline*}
(Q^k_{\alpha,\beta,M},Q^l_{\alpha,\beta,M})=
\sum_{x=0}^{M}\frac{\Gamma(\alpha+x+1)\Gamma(\beta+N-x+1)}{\Gamma(x+1)\Gamma(N-x+1)}Q^k_{\alpha,\beta,M}(x)Q^l_{\alpha,\beta,M}(x)\\
=\frac{(-1)^k(k+\alpha+\beta+1)_{M+1}(\beta+1)_k k! \Gamma(\alpha+1) \Gamma(\beta+1)}{(2k+\alpha+\beta+1)(\alpha+1)_k (-M)_k M!}\delta_{kl}.
\end{multline*}
\end{lemma}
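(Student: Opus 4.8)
Since $Q^k_{\alpha,\beta,M}$ is one of the classical hypergeometric families, one option is simply to quote \cite[(1.5.2)]{KS}; for completeness I would give the following self-contained derivation. Recall that
$$Q^k_{\alpha,\beta,M}(x)=\mathstrut_3F_2{{-k,-x,k+\alpha+\beta+1}\choose{-M,\alpha+1}}\bigl(1\bigr),$$
so that for $0\le k\le M$ this is a genuine polynomial of degree $k$ in $x$, with leading coefficient
$$\ell_k=\frac{(k+\alpha+\beta+1)_k}{(-M)_k(\alpha+1)_k}$$
read off from the term $j=k$ of the series. The weight can be written as $\rho(x)=\Gamma(\alpha+1)\Gamma(\beta+1)\binom{\alpha+x}{x}\binom{\beta+M-x}{M-x}$; it satisfies a discrete Pearson relation $\nabla\bigl(\sigma(x)\rho(x)\bigr)=\tau(x)\rho(x)$ with $\sigma$ of degree $\le 2$ and $\tau$ linear, which is the source of the orthogonality of the $Q^k$. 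I would take this orthogonality (hence the factor $\delta_{kl}$) as known, so that only the diagonal value $h_k:=(Q^k_{\alpha,\beta,M},Q^k_{\alpha,\beta,M})$ needs to be computed.

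To compute $h_k$ I would use the three--term recurrence for the Hahn polynomials \cite[(1.5.3)]{KS},
$$-x\,Q_k=A_kQ_{k+1}-(A_k+C_k)Q_k+C_kQ_{k-1},$$
with $A_k,C_k$ explicit rational functions of $k$. For polynomials normalized in this way the standard identity $\langle xQ_{k-1},Q_k\rangle=\langle Q_{k-1},xQ_k\rangle$ gives $h_k/h_{k-1}=C_k/A_{k-1}$, and telescoping from the base case
$$h_0=\sum_{x=0}^{M}\rho(x)=\Gamma(\alpha+1)\Gamma(\beta+1)\binom{\alpha+\beta+M+1}{M}=\frac{\Gamma(\alpha+1)\Gamma(\beta+1)(\alpha+\beta+2)_M}{M!},$$
in which the middle equality is the Chu--Vandermonde convolution, yields $h_k=h_0\prod_{j=1}^{k}C_j/A_{j-1}$. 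The same value can instead be obtained from the discrete Rodrigues formula for $Q^k_{\alpha,\beta,M}$ (as in \cite{NSU}): inserting it into $\sum_x\rho(x)\,Q^k(x)^2$ and summing by parts $k$ times — the boundary terms vanishing because the shifted weights are supported in $\{0,\dots,M\}$ — collapses the sum to $k!\,\ell_k$ times a single Chu--Vandermonde--summable series.

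The only real work is the bookkeeping: simplifying the telescoping product (or the Rodrigues output) to the closed form in the statement using Pochhammer identities — in particular $(a)_j=\Gamma(a+j)/\Gamma(a)$, the reflection $(-M)_k=(-1)^kM!/(M-k)!$, and combining the $(2j+\alpha+\beta)$--type factors of consecutive $C_j/A_{j-1}$ into the single $(2k+\alpha+\beta+1)$ in the denominator. I would check the outcome against $k=0$, where it must reduce to $h_0$ above, and note the harmless misprint in the statement (the weight is written with $N$ but should read $M$, the third parameter of the polynomial). This is the step most prone to sign and factor errors, but it is purely computational once the recurrence coefficients are in hand.
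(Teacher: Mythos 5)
Your proposal is correct and, in its first option, takes exactly the paper's route: the paper offers no proof of this lemma at all, the Appendix simply says the formula can be found in \cite{KS}. The additional self-contained derivation you sketch (orthogonality taken as known, $h_0$ via Chu--Vandermonde, $h_k/h_{k-1}=C_k/A_{k-1}$ from the three-term recurrence, telescoping) is sound and does reproduce the stated value, and you correctly handle the two points one must watch when quoting \cite[(1.5.2)]{KS}: the $N$-for-$M$ misprint in the weight, and the extra factor $\Gamma(\alpha+1)\Gamma(\beta+1)$ coming from the paper's unnormalized weight $\dfrac{\Gamma(\alpha+x+1)\Gamma(\beta+M-x+1)}{\Gamma(x+1)\Gamma(M-x+1)}$ versus the binomial-coefficient weight of \cite{KS}.
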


\begin{lemma} For $x,y\in\{0,1,\dots,M\}$,
\label{Lemma_Hahn_dual_orthogonality}
\begin{multline*}
 \sum_{k=0}^M\frac{(2k+\alpha+\beta+1)(\alpha+1)_{k}(-M)_{k}M!}
    {(-1)^k(k+\alpha+\beta+1)_{M+1}(\beta+1)_{k}k!}Q_k(x,\alpha,\beta,M)
        Q_k(y,\alpha,\beta,M)=\\
      =\frac{\delta_{xy}}{
       \dfrac{(\alpha+1)_{x}(\beta+1)_{M-x}}{x!(M-x)!}}.
\end{multline*}
\end{lemma}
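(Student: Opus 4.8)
The plan is to obtain the dual orthogonality relation as a purely linear-algebraic consequence of the ordinary orthogonality relation of Lemma~\ref{Lemma_Hahn_norm}, using the fact that the Hahn polynomials of degrees $0,1,\dots,M$, evaluated at the $M+1$ nodes $\{0,1,\dots,M\}$, form a \emph{square} array. In short: orthogonality says a certain square matrix $U$ satisfies $UU^{T}=I$, and for a square matrix this forces $U^{T}U=I$, which is exactly the claimed identity.

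\textbf{Step 1 (normalizing both sides).} Rewriting the $\Gamma$-factors in Lemma~\ref{Lemma_Hahn_norm} as Pochhammer symbols, the weight there is
$$
\tilde\rho(x)=\frac{\Gamma(\alpha+x+1)\Gamma(\beta+M-x+1)}{\Gamma(x+1)\Gamma(M-x+1)}=\Gamma(\alpha+1)\Gamma(\beta+1)\,\frac{(\alpha+1)_x(\beta+1)_{M-x}}{x!\,(M-x)!}.
$$
Writing $h_k=(Q^k_{\alpha,\beta,M},Q^k_{\alpha,\beta,M})$ for the squared norm computed there, a one-line manipulation of Pochhammer symbols (using $(-M)_k=(-1)^kM!/(M-k)!$) shows that $h_k>0$ for $\alpha,\beta>-1$, that the coefficient of $Q_k(x)Q_k(y)$ in the statement equals $\Gamma(\alpha+1)\Gamma(\beta+1)/h_k$, and that the right-hand side equals $\Gamma(\alpha+1)\Gamma(\beta+1)\,\delta_{xy}/\tilde\rho(x)$. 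Hence the assertion is equivalent to
$$
\sum_{k=0}^{M}\frac{Q_k(x)Q_k(y)}{h_k}=\frac{\delta_{xy}}{\tilde\rho(x)},\qquad x,y\in\{0,\dots,M\}.
$$

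\textbf{Step 2 (orthogonality of a square matrix).} I would introduce the $(M+1)\times(M+1)$ matrix $U$ with entries $U_{kx}=\sqrt{\tilde\rho(x)/h_k}\,Q_k(x)$ for $k,x\in\{0,\dots,M\}$, all radicands being positive by Step~1. Lemma~\ref{Lemma_Hahn_norm} says precisely that $\sum_{x}U_{kx}U_{lx}=\delta_{kl}$, i.e.\ $UU^{T}=I$. Since $Q_k$ has exact degree $k$ (its leading coefficient $(k+\alpha+\beta+1)_k/((-M)_k(\alpha+1)_k)$ is nonzero for $0\le k\le M$ and $\alpha,\beta>-1$), the functions $Q_0,\dots,Q_M$ restricted to $M+1$ points are linearly independent, so $U$ is invertible; together with $UU^{T}=I$ this gives $U^{T}U=I$. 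Reading off the $(x,y)$-entry of $U^{T}U=I$ yields $\sum_{k}\sqrt{\tilde\rho(x)\tilde\rho(y)}\,Q_k(x)Q_k(y)/h_k=\delta_{xy}$; dividing by $\sqrt{\tilde\rho(x)\tilde\rho(y)}$ and using $\delta_{xy}/\sqrt{\tilde\rho(x)\tilde\rho(y)}=\delta_{xy}/\tilde\rho(x)$ gives the displayed identity of Step~1, hence the lemma. (Equivalently, this is just the orthogonality relation for the \emph{dual Hahn} polynomials $R_n(\lambda(x))=Q_x(n)$.)

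\textbf{Main obstacle.} There is essentially none: the mathematical content is the elementary observation that a square matrix with $UU^{T}=I$ automatically satisfies $U^{T}U=I$. The only point demanding care is the constant-chasing in Step~1 — verifying
$$
\frac{(2k+\alpha+\beta+1)(\alpha+1)_{k}(-M)_{k}M!}{(-1)^k(k+\alpha+\beta+1)_{M+1}(\beta+1)_{k}k!}=\frac{\Gamma(\alpha+1)\Gamma(\beta+1)}{h_k}
$$
and that the weight on the right-hand side of the statement is $\tilde\rho$ divided by $\Gamma(\alpha+1)\Gamma(\beta+1)$ — but this is a routine rewriting of Gamma functions as Pochhammer symbols.
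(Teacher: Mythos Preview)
Your argument is correct: the dual orthogonality is exactly the relation $U^{T}U=I$ once Lemma~\ref{Lemma_Hahn_norm} is read as $UU^{T}=I$ for the square matrix $U_{kx}=\sqrt{\tilde\rho(x)/h_k}\,Q_k(x)$, and the constant-matching in Step~1 checks out directly against the value of $h_k$ displayed in Lemma~\ref{Lemma_Hahn_norm}. (One tiny redundancy: once $U$ is square, $UU^{T}=I$ already forces $\det U\neq 0$, so the separate linear-independence check is not needed.)

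As for comparison with the paper: there is nothing to compare, because the paper does not prove this lemma at all --- it is stated in the Appendix under the sentence ``The following formulas can be found in e.g.\ \cite{KS}'' and is simply quoted from the Koekoek--Swarttouw compendium (it is the orthogonality relation for the dual Hahn polynomials). Your write-up supplies the standard self-contained derivation of dual orthogonality from ordinary orthogonality, which is more informative than a bare citation and makes transparent why Lemma~\ref{Lemma_Hahn_dual_orthogonality} is an immediate consequence of Lemma~\ref{Lemma_Hahn_norm}.
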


The following Lemma was proved in \cite[Lemma 8]{Gor}

\begin{lemma}
\label{Lemma_Hahn_recurrence_relation}
\begin{multline*}
\quad xQ_k(x-1;\alpha,\beta,M-1)+(M-x)Q_k(x;\alpha,\beta,M-1)
    =MQ_k(x;\alpha,\beta,M).
\end{multline*}
\end{lemma}

\begin{proof}[Proof of Proposition \ref{Proposition_Hahn_eigen_relation}]
 Recall that our goal is to prove the following equality
\begin{equation}
\label{formula_Hahn}
 \sum_{i=0}^{N+p-1} c_i^N f^i_N(x)f^i_{N+1}(y)=
 \begin{cases}
 \sqrt{\frac{(w'+x+1)(x+1)}{(x+N)(w'+z'+N+1)}},& y=x+1\\
 \sqrt{\frac{(z'+N-x)(p+N-x)}{(p+x)(w'+z'+N+1)}},& y=x\\
 0,& otherwise.
\end{cases}
\end{equation}
First, we express left-hand side of \eqref{formula_Hahn} through the polynomials $Q^k_{\alpha,\beta,M}$.
By the definition
$$
 f^k_N(x)=\frac{Q^k_{w',z'-p,N+p-1}(x)\sqrt{w_N(x)}}{\sqrt{(Q^k_{w',z'-p,N+p-1},Q^k_{w',z'-p,N+p-1})}}.
$$

Lemma \ref{Lemma_Hahn_norm} provides the value of $(Q^k_{w',z'-p,N+p-1},Q^k_{w',z'-p,N+p-1})$. Next, we express
$Q^k_{w',z'-p,N+p}$ through $Q^k_{w',z'-p,N+p-1}$ using Lemma \ref{Lemma_Hahn_recurrence_relation}. Applying
Lemma \ref{Lemma_Hahn_dual_orthogonality} completes the proof.
\end{proof}

The following lemma in slightly different form can be found in \cite{KS}.

\begin{lemma} For $\alpha>-1$, $\beta>-1$,
\label{Lemma_Jacobi_norm}
$$
\int\limits_0^1(1-x)^\alpha x^\beta Jac^k_{\alpha,\beta}(x)Jac^l_{\alpha,\beta}(x)\\
=\frac{\Gamma(k+\alpha+1)\Gamma(k+\beta+1)}{(2k+\alpha+\beta+1)\Gamma(k+\alpha+\beta+1)k!}\delta_{kl}.
$$
\end{lemma}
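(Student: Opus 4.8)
The plan is to reduce the identity to the classical Jacobi orthogonality relation on $(-1,1)$ recorded in \cite{KS}, via an affine change of variable. Comparing the hypergeometric representations, one sees that $Jac^k_{\alpha,\beta}(x)$ is nothing but the classical Jacobi polynomial $P^{(\alpha,\beta)}_k$ evaluated at $2x-1$: the classical normalization uses the argument $\tfrac{1-u}{2}$ of the ${}_2F_1$, and the substitution $u=2x-1$ turns this into $1-x$, while leaving the prefactor $(\alpha+1)_k/k!$ unchanged.

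First I would start from the known norm (see \cite{KS})
$$\int_{-1}^{1}(1-u)^\alpha(1+u)^\beta P^{(\alpha,\beta)}_k(u)P^{(\alpha,\beta)}_l(u)\,du=\frac{2^{\alpha+\beta+1}\,\Gamma(k+\alpha+1)\Gamma(k+\beta+1)}{(2k+\alpha+\beta+1)\Gamma(k+\alpha+\beta+1)\,k!}\,\delta_{kl},$$
and perform the substitution $u=2x-1$ in the integral on the left-hand side of Lemma \ref{Lemma_Jacobi_norm}. Then $du=2\,dx$, the interval $(0,1)$ maps to $(-1,1)$, and
$$(1-x)^\alpha x^\beta=2^{-\alpha-\beta}(1-u)^\alpha(1+u)^\beta,\qquad Jac^k_{\alpha,\beta}(x)=P^{(\alpha,\beta)}_k(u).$$
Hence $\int_0^1(1-x)^\alpha x^\beta Jac^k_{\alpha,\beta}(x)Jac^l_{\alpha,\beta}(x)\,dx$ equals $2^{-\alpha-\beta-1}$ times the classical integral, and the factor $2^{-\alpha-\beta-1}$ cancels the $2^{\alpha+\beta+1}$ in the displayed formula, giving exactly the asserted value, including the Kronecker delta.

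There is no serious obstacle here; the statement is precisely the classical one rewritten in the $(0,1)$-normalization used throughout the paper. The only care needed is (i) verifying that the normalization conventions of \cite{KS} — in particular the leading-coefficient convention — are consistent under $u=2x-1$, and (ii) keeping track of the powers of $2$ so that they cancel as claimed. As an alternative, one could obtain the formula as the $M\to\infty$ limit of the Hahn orthogonality relation of Lemma \ref{Lemma_Hahn_norm} together with the standard Hahn-to-Jacobi degeneration, but this would require an additional dominated-convergence argument and is less direct than the change of variable.
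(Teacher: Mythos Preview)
Your proposal is correct and matches what the paper does: the paper gives no proof of this lemma at all, simply stating that ``in slightly different form'' it can be found in \cite{KS}. Your affine change of variable $u=2x-1$ is precisely the ``slight difference in form'' alluded to, and your bookkeeping of the powers of $2$ and the identification $Jac^k_{\alpha,\beta}(x)=P^{(\alpha,\beta)}_k(2x-1)$ via the hypergeometric representation are both accurate.
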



\begin{thebibliography}{Olsh}

\bibitem[BO]{BO}  A.~Borodin, G.~Olshanski,
  Harmonic analysis on the infinite-dimensional unitary group and
 determinantal point processes,
   Ann. of Math. 161 (2005), no. 3, 1319--1422, arXiv:math/0109194.

\bibitem[BO2]{BO2} A.~Borodin, G.~Olshanski, Infinite dimensional diffusions as limits of
random walks on partitions. To appear in Probability Theory and
Related Field. sarXiv:0706.1034

\bibitem[BO3]{BO_Meix}  A.~Borodin, G.~Olshanski, Markov processes on partitions.
Probab. Theory and Related Fields 135 (2006), no. 1, 84--152. arXiv:
math-ph/0409075

\bibitem[D]{D} J. L. Doob, Classical potential theory and its probabilistic counterpart.
Grundlehren der mathematischen Wissenschaften, vol. 262. Springer-Verlag, New York 1984

\bibitem[EM]{EM} B.~Eynard and M.~L.~Mehta, Matrices
   coupled in a chain. I. Eigenvalue correlations. J. Phys. A: Math. Gen. 31(1998), 4449--4456

\bibitem[Fu1]{Fu1} J.~Fulman, Stein's method and Plancherel measure
of the symmetric group, Trans. Amer. Math. Soc. 357 (2005),
555--570. arXiv: math.RT/0505423

\bibitem[Fu2]{Fu2} J.~Fulman, Commutation relations and Markov
chains. To appear in Probability Theory and Related Fields.
arXiv:0712.1375

\bibitem[Gor]{Gor} V.~Gorin, Non-intersecting paths and Hahn
orthogonal polynomial ensemble. To appear in Functional Analysis and
its Applications. arXiv:0708.2349

\bibitem[J]{J} K.~Johansson. Discrete polynuclear growth and
determinantal processes. Comm. Math. Phys. 242(2003), 277--329,
arXiv:math/0206208

\bibitem[KO]{KO} W.~K\~onig, N.~O'Connell, Eigenvalues of the Laguerre process
as non-colliding squared Bessel processes, Elec. Comm. Probab. 6
(2001), no. 11, 107--114

\bibitem[K\"on]{Konig} W.~D.~K\"onig,  Orthogonal polynomial ensembles in probability theory, Probability Surveys 2, (2005), 385-447

\bibitem[KOV]{KOV}
 S.~Kerov, G.~Olshanski, A.~Vershik, Harmonic Analysis on the infinite
 symmetric group. Invent. Math. 158 (2004), no. 3, 551--642, arXiv:math/0312270.

\bibitem[KS]{KS} R.~Koekoek and R.~F.~Swarttouw,
   The Askey--scheme of hypergeometric orthogonal polynomials
   and its $q$-analogue, http://aw.twi.tudelft.nl/\~{}koekoek/reports.html

\bibitem[M]{M} M.~L.~Metha, Random matrices, 2nd edition, Academic Press, Boston, MA, 1991.

\bibitem[Mac]{Mac}
 I.~Macdonald, Symmetric Functions and Hall Polinomaial, Clarendon Press Oxford,1979.


\bibitem[NSU]{NSU} A.~F.~Nikiforov, S.~K.~Suslov and V.~B.~Uvarov, Classical Orthogonal Polynomials of a Discrete Variable,
 Sprienger Series in Computational Physics, Sprienger-Verlag, New York, 1991.

\bibitem[Olsh]{Olsh} G.~Olshanski, The problem of harmonic analysis
 on the infinite-dimensional unitary group,
 J. Funct. Anal. 205 (2003), no. 2, 464--524, arXiv:math/0109193.

\bibitem[Pe]{Pe} L.~Petrov, A Two-Parameter Family of Infinite-Dimensional Diffusions in the
Kingman Simplex. arXiv:0708.1930

\bibitem[S]{S} A.~Soshnikov, Determinantal random point fields, Russian Math. Surveys 55 (2000), no. 5, 923-975

\bibitem[TW]{TW} C.~A.~Tracy, H.~Widom, Differential equations for
Dyson processes. Comm. Math. Phys. 252(2004), 7--41,
arXiv:math/0309082

\bibitem[Zh]{Zh} D.~P.~Zhelobenko, Compact Lie Groups and their
Representations, Nauka, Moscow, 1970 (Russian); English translation:
Transl. Math. Monographs 40, A. M. S., Providence, RI, 1973.

\end{thebibliography}
\end{document}